\newtheorem{theorem}{Theorem}[section]
\newtheorem{lemma}[theorem]{Lemma}
\newtheorem{corollary}{Corollary}[theorem]
\newtheorem{proposition}{Proposition}[section]
\newtheorem{definition}[theorem]{Definition}
\newtheorem{example}[theorem]{Example}
\newtheorem{remark}[theorem]{Remark}
\numberwithin{equation}{section}
\newcommand{\abs}[1]{\lvert#1\rvert}
\begin{document}
\title{Contracting Boundary of a Cusped Space}

%    Information for first author
\author{ Abhijit Pal}
%    Address of record for the research reported here
\address{Indian Institute of technology, Kanpur}

%    Information for second author
\author{Rahul Pandey}
\address{Indian Institute of technology, Kanpur}

%    General info
\subjclass[2000]{}

\date{}

\dedicatory{}

\keywords{Contracting Boundary, Relatively Hyperbolic Groups}

\begin{abstract}
Let $G$ be a finitely generated group. Cashen and Mackay proved that if the contracting boundary of $G$ with the topology of fellow travelling quasi-geodesics is compact then $G$ is a hyperbolic group. Let $\mathcal{H}$ be a finite collection of finitely generated infinite index subgroups of $G$. Let $G^h$ be the cusped space obtained by attaching combinatorial horoballs to each left cosets of elements of $\mathcal {H}$. In this article, we prove that if the combinatorial horoballs are contracting and $G^h$ has compact contracting boundary then $G$ is hyperbolic relative to $\mathcal{H}$.
\end{abstract}
\maketitle

\begin{center}{AMS Subject Classification : 20F65, 20F67, 57M07}\end{center}
\section{Introduction}
The Gromov boundary of a proper hyperbolic metric space provides a compactification of the space by associating  a set of points at infinity.
To construct an analogue of the  Gromov boundary for a general proper geodesic metric space has created a lot of interest in recent times. In \cite{Charney15}, Charney-Sultan gave the notion of ‘contracting' boundary for a CAT$(0)$ space which is an analogue of the Gromov boundary. A subset $Z$ of a geodesic metric space $X$ is said to be $\rho$-contracting for a sublinear function $\rho$ if the diameter of projection of balls of radius $r$ disjoint with $Z$ is at most $\rho(r)$ (see Definition \ref{ContractingSub}). Contracting boundary of a geodesic metric space is defined to be the collection of all contracting geodesic rays starting from a fixed basepoint modulo Hausdorff equivalence.  Contracting boundary has been studied under two different topologies,  direct limit topology by Cordes(\cite{Morse2015}) and topology of fellow travelling quasi-geodesics by Cashen and Mackay(\cite{cashen2017}). Both the topologies are quasi-isometric invariant. In general, topology defined by Cashen and Mackay is coarser than the direct limit topology and for finitely generated groups it is metrizable. In this article, we will deal with contracting boundary having topology of fellow travelling
quasi-geodesics. The contracting boundary of a geodesic metric space $X$ will be denoted as $\partial_cX$.\par
Relatively hyperbolic groups are generalisations of  hyperbolic groups and it has been defined in many ways  by various people (See \cite{Farb}, \cite{BowditchRel}, \cite{Drutu}, \cite{DrutuSapir}, \cite{GrovesManning}).
We refer the reader to see the article \cite{hruska} by Hruska for various equivalent definitions of relatively hyperbolic groups.
In this article,  we take the definition of relatively hyperbolic groups given by Groves and Manning (\cite{GrovesManning}). A finitely generated group $G$ is said to be hyperbolic relative to a finite collection $\mathcal{H}=\{H_i\}$ of finitely generated  subgroups if the cusped space $G^h$ obtained from a Cayley graph of $G$ by attaching combinatorial horoballs to each left coset of $H_i$ is a hyperbolic metric space (See Definition \ref{cuspedspace}). If $G$ is hyperbolic relative to $\mathcal{H}$ then the cusped space, being hyperbolic, has compact contracting boundary and the combinatorial horoballs are quasi-convex in the cusped space with vertical rays in each combinatorial horoballs are contracting in the cusped space. In \cite{cashen2017}, Cashen and Mackay proved that if a  finitely generated group has compact contracting boundary then the group is hyperbolic. In this article, we generalize Cashen and Mackay's result to relatively hyperbolic groups.
Our main theorem characterizes a relatively hyperbolic group as follows :
\begin{theorem}(Theorem \ref{Mthm2})
Let $G$ be a finitely generated group and  $\mathcal{H}=\{H_i\}$ be a finite collection of finitely generated infinite index subgroups of $G$. Then, $G$ is hyperbolic relative to $\mathcal H$ if and only if
every combinatorial horoball $H_i^h$ is contracting in the cusped space $G^h$ and $G^h$ has compact contracting boundary.
\end{theorem}
Suppose $\mathcal H$ is the singleton set consisting of the trivial subgroup of $G$ in Theorem \ref{Mthm2}. Then combinatorial horoballs for each $g\in G$ corresponds to a geodesic ray $g^h:[0,\infty)\to G^h$ such that $g^h([0,\infty))\cap G=\{g\}$. It is easy see that if $\{g_n^h(\infty)\}$ is a sequence of distinct points in $\partial_cG^h$  such that $g_n\to \zeta$
in $G\cup\partial_cG$ then $g_n^h(\infty)\to\zeta$ in $\partial_cG^h$. The contracting boundaries $\partial_cG$
and $\partial_cG^h$ are metrizable (See Corollary 8.6 of \cite{cashen2017} and Lemma \ref{metrizable}). The notions of compactness and sequentially compactness are equivalent in a metric space. If $\partial_cG$ non-empty compact, then given a sequence $\{g_n^h(\infty)\}$, we get a subsequence $\{g_{n_k}\}$ of $\{g_n\}$ such that $g_{n_k}$ converges to a point of $\partial_cG$ (see Case 1 of Proposition \ref{compactf} for a proof). So, by assuming $G$ to be a finitely generated group with $\partial_cG$ non-empty compact and $\mathcal H$ to be the singleton set consisting of the trivial subgroup, we get the contracting boundary of the cusped space $G^h$ to be compact  and the combinatorial horoballs are contracting. As an application of Theorem \ref{Mthm2}, we get $G$ relatively hyperbolic to $\mathcal H$ and as
$\mathcal H$ is trivial, the group $G$ is hyperbolic.

Note that for a general space with compact contracting boundary it  does not necessarily imply that the space is hyperbolic. For example, the wedge of a plane and a line has two points in the boundary and hence compact, but the space is not a hyperbolic metric space.
 Let $X$ be a proper geodesic metric space and $\mathcal{H}$ be
a collection of uniformly separated closed subsets of $X$. We can adapt the definition of Groves and Manning \cite{GrovesManning}, to define $X$ hyperbolic relative to $\mathcal H$. The space $X$
is said to be \textit{hyperbolic relative to $\mathcal{H}$}, if the quotient (cusped) space
$X^h$ obtained by attaching hyperbolic cones (analog of combinatorial horoballs) along each $H\in\mathcal{H}$  is a hyperbolic metric space (See Definitions 1.9, 1.10 of \cite{Mjpal}). The contracting boundary of the cusped space $X^h$ compact does not necessarily imply that $X$ is relatively hyperbolic to $\mathcal{H}$, see Example \ref{example}. The group theory plays a crucial role in proving Theorem \ref{Mthm2}.

\begin{example}\label{example} Consider the subspace $$Y=\{(x,0,0):x\in\mathbb R_{\geq 0}\}\cup \{(2m,s,t):m\in\mathbb N, s\in [-m,m],t\in\mathbb R\}$$ of $\mathbb R^3$.
Let $\alpha(t)=(t,0,0)$, where $t\in [0,\infty)$. Let $\beta_{n}^{-}:[0,\infty)\to Y$ be defined by $\beta_{n}^{-}(t)=(t,0,0)$ if $t\leq 2n$ and
$\beta_{n}^{-}(t)=(2n,0,2n-t)$ if $t\geq 2n$. Let $\beta_{n}^{+}:[0,\infty)\to Y$ be defined by $\beta_{n}^{+}(t)=(t,0,0)$ if $t\leq 2n$ and
$\beta_{n}^{+}(t)=(2n,0,t-2n)$ if $t\geq 2n$.
The contracting boundary $\partial_cY$ of $Y$ is given by the set $\{\alpha(\infty),\beta_{n}^{-}(\infty),\beta_{n}^{+}(\infty): n\in \mathbb N\}$, where $\alpha(\infty),\beta_{n}^{\pm}(\infty)$ respectively represent the equivalence classes of geodesic rays $\alpha,\beta_{n}^{\pm}$ in $\partial_cY$. With respect to the topology of fellow travelling quasi-geodesics, the sequences
$\{\beta_{n}^{-}(\infty)\}$, $\{\beta_{n}^{+}(\infty)\}$ converge to $\alpha(\infty)$ in $\partial_cY$. So, $\partial_cY$ is compact but $Y$ is not a hyperbolic metric space.\\
Take a wedge of a copy of the Euclidean plane with $Y$ along the point $(2m+1,0,0)$ for each $m\in \mathbb Z_{\geq 0}$ and index these wedged planes respectively by $E_m$ . Let us denote the resulting space by $X$. Attach a horoball to each $E_m$ and denote it by $E_m^{h}$. Denote the resulting space, after attaching horoballs, by $X^h$. $E_m^{h}$ is a Morse subset of $X^h$, also $\partial_cX^h$ is compact. But $X^h$ is not hyperbolic due to the presence of fat quasi-geodesic bigons. Hence $X$ is $not$ relatively hyperbolic with respect to uniformly $2$-separated collection of closed subspaces $\{E_m\}$.

 \end{example}

There are examples of a cusped space in which attached horoballs are contracting but the boundary of the cusped space is not compact.
\begin{example}  Let $H$ be any finitely generated group.
Let $G=H* \mathbb{Z}^2=H*\langle a,b|\ ab=ba\rangle$. Let $\mathcal{H}$$=$$\{H\}$ and $G^h$ be the cusped space for the pair $(G,\mathcal{H})$. Clearly, the combinatorial horoball $gH^h$ is contracting in $G^h$ for any coset $gH$.  $G^h$ is not a hyperbolic metric space  as it contains $\mathbb Z^2$. We can explicitly construct a sequence in $\partial_cG^h$ which does not converge to any point of $\partial_cG^h$.\par
 Let $\nu$ be the vertical ray in $H^h$ starting from $e$ (identity of $G$). The geodesic ray $\nu$ is contracting and let it represents a point $\zeta\in \partial_cG^h$.  For each $n\geq1$, define $\delta_n:=[1,a^n]*a^n\nu$, where $[1,a^n]$
 is the geodesic in $\mathbb Z^2$ joining $1,a^n$ and “$*$” means concatenation of two paths.  $\delta_n$ is a contracting geodesic ray and $\delta_n\in a^n\zeta$.  $\{a^n\zeta\}$ is a sequence of distinct points in $\partial_cG^h$ such that no subsequence of $\{a^n\zeta\}$ converges to a point in $\partial_cG^h$.
\end{example}
\begin{example}
Let $H$ be any finitely generated group.
Let $G=H\times \mathbb{Z}$. Let $\mathcal{H}$$=$$\{H\}$ and $G^h$ be the cusped space for the pair $(G,\mathcal{H})$. Any two left cosets $gH,g'H$ lie in a bounded neighborhood of each other however there exists geodesics in combinatorial horoballs $gH^h$ and $g'H^h$ whose end points lie in uniform bounded distance from each other but the geodesics
does not lie in a uniform bounded neighborhood of each other. Thus,
for any $g\in G$, the combinatorial horoball $gH^h$ is not contracting in $G^h$. The cusped space $G^h$ is not hyperbolic and  $\partial_cG^h$ is empty.
\end{example}
Let $G$ be a finitely generated group and $H\leq G$ be a normal hyperbolic subgroup such that Cannon-Thurston map for the pair $(G,H)$ exists. If $G$ contains a contracting element and $\Lambda_cH$ has at least two elements then $G$ is a hyperbolic group (See Theorem \ref{CT}).
In the last section, we give an application of Theorem \ref{Mthm2} related to Cannon-Thurston maps for relatively hyperbolic groups.

\section{Preliminaries}
\subsection{Morse \& Contracting Quasi-geodesics}
\begin{definition}
\begin{enumerate}
\item(Quasi-isometry): Let $K\geq 1,\epsilon\geq 0$. Let $(X,d_{X})$ and $(Y,d_{Y})$ be two metric spaces.
 A map $f: X \rightarrow Y$ is said to be a $(K,\epsilon)$-$quasi$-$isometric$ embedding if
$$\frac{1}{K}d_{X}(a,b)-\epsilon\leq d_{Y}(f(a),f(b))\leq Kd_{X}(a,b)+\epsilon$$ for all $a,b\in X.$
In addition, if for each $y\in Y$ there exists $x\in X$ such that $d_Y(y,f(x))\leq K$ then $f$
is said to be a $(K,\epsilon)$-$quasi$-$isometry$ between $X$ and $Y$.
\item (Quasi-geodesic): Let $X$ be metric space. A map $c: I \rightarrow X$,
where $I$ is any interval in $\mathbb{R}$ with usual metric, is
said to be $(K,\epsilon)$-$quasi$-$geodesic$ if c is a $(K,\epsilon)$-quasi-isometric embedding.
\end{enumerate}
\end{definition}

\begin{definition}
(Morse quasi-geodesic): A quasi-geodesic $\gamma$ in a geodesic metric space X is
called $N$-$Morse$ if
there exists a function $N:\mathbb{R}_{\geq 1} \times \mathbb{R}_{\geq 0} \rightarrow
\mathbb{R}_{\geq 0}$ such that if $q$ is
any $(K,\epsilon)$-quasi-geodesic with endpoints on $\gamma$ then
$q$ lies in the closed $N(K,\epsilon)$-neighborhood of $\gamma$.
We call $N$ to be the $Morse\ Gauge$ of $\gamma$.
\end{definition}

\noindent {A function $\rho$ is called \textit{sublinear} if it is non-decreasing, eventually
non-negative, and $lim_{r \rightarrow \infty} \rho(r)/r = 0$.}
\begin{definition}\label{ContractingSub}
(Contracting quasi-geodesic):
Let $\gamma:I\rightarrow X$ be a quasi-geodesic in a geodesic metric space $X$.
Let $\pi_{\gamma} : X \rightarrow \mathbb P({\gamma(I)})$ be defined as
$\pi_{\gamma}(x)=\{z \in \gamma \ |d(x, z) = d(x, \gamma(I))\}$, where $\mathbb P({\gamma(I)})$ denotes the power set of $\gamma(I)$.
The map $\pi_{\gamma}$ is called to be the closest point
projection to $\gamma(I)$.  For a sublinear function $\rho$, we say that $\gamma$
is $\rho$-$contracting$ if for all $x$ and $y$ in X:
 $$ d(x,y)\leq d(x,\gamma(I))\implies
diam(\pi_{\gamma(I)}(x)\cup\pi_{\gamma(I)}(y))\leq \rho(d(x,\gamma(I))) .$$
 We say that a quasi-geodesic $\gamma$ is contracting if it is $\rho$-$contracting$ for
some sublinear function $\rho$.
\end{definition}
\textbf{Note:} In the above two definitions one can take any subset $Z$ of $X$
instead of quasi-geodesics and then we have the definitions of Morse and contracting subsets.

\begin{lemma}\label{almosttriangle}(Lemma 3.8 of \cite{cashen2017}) Given a sublinear function $\rho$, there is a sublinear
function $\rho'\asymp\rho$ such that if $\alpha$, $\beta$, and $\gamma$ are  geodesic  triangle with $\alpha$ and $\beta$ are $\rho$-contracting then $\gamma$ is $\rho'$-contracting.
\end{lemma}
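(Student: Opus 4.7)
The plan is to establish Lemma \ref{almosttriangle} in two stages: first prove a \emph{thin triangle} property for the triangle $\alpha \cup \beta \cup \gamma$, namely that every point of $\gamma$ lies sublinearly close to $\alpha \cup \beta$; then use this closeness to transfer the contracting property from $\alpha \cup \beta$ to $\gamma$, losing only a multiplicative constant in the sublinear gauge.

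For the thin triangle step, label the vertices so that $\alpha$ and $\gamma$ share the endpoint $a$, $\beta$ and $\gamma$ share the endpoint $b$, and $\alpha, \beta$ share the endpoint $c$. Fix $z \in \gamma$ and set $D = d(z, \alpha \cup \beta)$. The standard machinery for $\rho$-contracting geodesics (the Morse property, which for contracting sets is equivalent to contracting with a comparable gauge) implies that any quasi-geodesic from $a$ to $c$ stays in a sublinear neighborhood of $\alpha$, and similarly for $\beta$. Applying this to the concatenation of $[a,z]_\gamma$, a geodesic $[z, \pi_\beta(z)]$, and the subsegment of $\beta$ from $\pi_\beta(z)$ to $c$ (or the analogous path via $\alpha$), and comparing lengths through the triangle inequality, I obtain a sublinear bound on $d(z, \alpha \cup \beta)$ in terms of $d(z,\{a,b\})$ and $\rho$. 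Concretely, one gets a sublinear function $\rho_1 \asymp \rho$ with $d(z,\alpha\cup\beta)\leq \rho_1(\min(d(z,a),d(z,b)))$.

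For the contracting step, suppose $x,y\in X$ satisfy $d(x,y)\leq d(x,\gamma)$, and pick $p_x\in\pi_\gamma(x)$, $p_y\in\pi_\gamma(y)$; I need to bound $d(p_x,p_y)$ by $\rho'(d(x,\gamma))$. Using the thin triangle property, approximate $p_x$ and $p_y$ by points $p_x',p_y'\in\alpha\cup\beta$ at distance at most $\rho_1(d(x,\gamma))$. In the case both $p_x',p_y'\in\alpha$ (or both in $\beta$), apply the $\rho$-contracting property of $\alpha$ (resp.\ $\beta$) to $x,y$, noting that $d(x,\alpha)$ differs from $d(x,\gamma)$ only by a sublinear quantity, and transfer the resulting bound on $d(\pi_\alpha(x),\pi_\alpha(y))$ back to $d(p_x,p_y)$ using that closest-point projection onto a contracting set is coarsely Lipschitz. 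Combine the error terms using Lemma 2.2 of \cite{cashen2017} (closure of sublinear functions under sums and rescalings) to produce a single $\rho'\asymp\rho$.

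The main obstacle will be the mixed case, where $p_x'\in\alpha\setminus\beta$ while $p_y'\in\beta\setminus\alpha$, so that neither of $\alpha,\beta$ alone can bound $d(p_x,p_y)$. The resolution is to insert the vertex $c$, or rather the closest-point projection region of $z\in\gamma$ onto $\alpha\cap\beta$-neighborhood, as an intermediate pivot: one writes $d(p_x,p_y)\leq d(p_x,q)+d(q,p_y)$ for a point $q\in\gamma$ lying in the thin part closest to $c$, and bounds each piece by the previous case after possibly rescaling. One must verify that the transition zone on $\gamma$ between the $\alpha$-close and $\beta$-close portions has diameter controlled sublinearly in $d(x,\gamma)$; this uses that $d(x,\gamma)\geq d(x,y)$ forces $p_x,p_y$ to lie in a region where the relevant distances to $c$ are comparable to $d(x,\gamma)$, so the additive error from the pivot remains sublinear. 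Putting all three cases together and taking the maximum of the sublinear functions produced yields the desired $\rho'\asymp\rho$.
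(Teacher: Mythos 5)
The paper does not prove this statement at all: it is quoted as Lemma 3.8 of \cite{cashen2017} and used as a black box, so there is no in-paper argument to compare against. Judged on its own, your proposal has the right overall shape (thin triangle, then transfer of contraction) but contains genuine gaps at both stages. In the thin-triangle step, the path $[a,z]_{\gamma}\ast[z,\pi_{\beta}(z)]\ast\beta|_{[\pi_{\beta}(z),c]}$ is not an $(L,A)$-quasi-geodesic with constants controlled by $\rho$: a point of $[a,z]_{\gamma}$ and a point of the $\beta$-subsegment can be close to each other while the path between them through $z$ is arbitrarily long (already when $\gamma$ and $\beta$ fellow-travel), so the Morse property of $\alpha$ cannot be applied to it. Moreover the bound you extract, $d(z,\alpha\cup\beta)\leq\rho_1\bigl(\min(d(z,a),d(z,b))\bigr)$, is the wrong shape: what is true, and what you actually need, is a \emph{constant} bound $D=D(\rho)$. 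In Stage 2 you silently replace $\rho_1\bigl(\min(d(p_x,a),d(p_x,b))\bigr)$ by $\rho_1(d(x,\gamma))$; these quantities are unrelated, since $p_x$ can sit in the middle of an enormous triangle while $d(x,\gamma)$ is small, so the approximation error is not controlled by $d(x,\gamma)$ and the transfer collapses.

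The transfer step also rests on two false comparisons: that $d(x,\alpha)$ differs from $d(x,\gamma)$ by a sublinear quantity, and that $\pi_{\alpha}(x)$ is close to $\pi_{\gamma}(x)$. Both fail already in a tree: for $x$ just beyond the vertex $c$ one has $d(x,\alpha)$ essentially $0$ while $d(x,\gamma)$ is comparable to $d(c,\gamma)$, and $\pi_{\alpha}(x)$ lies near $c$ while $\pi_{\gamma}(x)$ lies at the median, a distance unbounded over all triangles; the "pivot at $c$" device does not repair this, since it asks you to bound $d(p_x,q)$ for a point $q\in\gamma$, to which the contraction hypothesis for $\alpha$ or $\beta$ does not apply. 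The standard repair, which is essentially the Cashen--Mackay route, avoids comparing projections altogether: with $p\in\pi_{\beta}(a)$, the concatenation $[a,p]\ast\beta|_{[p,b]}$ is a $(3,0)$-quasi-geodesic that is $\rho''$-contracting with $\rho''\asymp\rho$ (adjoining a closest-point-projection segment preserves contraction, and subsegments are handled by Lemma \ref{Clemma}); being contracting it is Morse (Theorem \ref{Cont}), so $\gamma$ lies at Hausdorff distance at most $D(\rho)$ from it, and Lemma \ref{lemma1} then yields $\rho'\asymp\rho$ with $\gamma$ $\rho'$-contracting. The single missing idea in your write-up is precisely this: identify one contracting set at uniformly bounded Hausdorff distance from $\gamma$, rather than trying to verify the contraction inequality for $\gamma$ directly by comparing projections onto $\alpha$, $\beta$ and $\gamma$.
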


\begin{theorem}  (Theorem 1.4 of \cite{arzhan}) \label{Cont} Let $Z$ be a subset of a geodesic
metric space $X$. The followings are equivalent:
\begin{enumerate}
\item  $Z$ is Morse.
\item $Z$ is contracting.
\end{enumerate}
Moreover, the equivalence is ‘effective’, in the sense that the defining function of
one property determines the defining functions of the other.
\end{theorem}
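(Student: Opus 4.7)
The plan is to prove both implications directly, using the closest-point projection $\pi_Z$ together with the sublinearity of the contracting function.

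For $(2)\Rightarrow(1)$, I would assume $Z$ is $\rho$-contracting and fix a $(K,\epsilon)$-quasi-geodesic $q:[0,L]\to X$ with $q(0),q(L)\in Z$; the objective is a uniform bound on $D:=\max_{t}d(q(t),Z)$. Localising to a maximal sub-interval $[a,b]\subseteq[0,L]$ on which $d(q(\cdot),Z)\geq D/2$, I would discretise $[a,b]$ at parameter spacing comparable to $D/K$ so that consecutive image points lie within $D/2$ of each other; the contracting hypothesis then bounds each jump of $\pi_Z$ by $\rho(D)$. Summing $O(K(b-a)/D)$ such jumps and combining the total with the two geodesic segments from $q(a),q(b)$ to their projections via the triangle inequality, and comparing the resulting chord length to the quasi-geodesic lower bound $d(q(a),q(b))\geq (b-a)/K-\epsilon$, yields an inequality of the form $(b-a)(1/K-C K\rho(D)/D)\leq D+\epsilon$. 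Since $\rho(D)/D\to 0$, this rearranges to a bound on $b-a$, and feeding it back into the quasi-isometry inequality applied to the sub-arcs from $q(a)$ and $q(b)$ to the maximiser forces $D\leq N(K,\epsilon)$.

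For $(1)\Rightarrow(2)$, I would define
\[
\rho(r):=\sup\{\,\mathrm{diam}\,\pi_Z(B(x,r)):\ d(x,Z)\geq r\,\}
\]
and argue by contradiction that $\rho$ is sublinear. If $\rho(r_n)\geq c\,r_n$ for some $c>0$ and a sequence $r_n\to\infty$, then there are balls $B(x_n,r_n)$ disjoint from $Z$ together with points $y_n,y_n'\in B(x_n,r_n)$ whose projections $z_n=\pi_Z(y_n)$ and $z_n'=\pi_Z(y_n')$ satisfy $d(z_n,z_n')\geq c\,r_n/2$. Forming the concatenation $p_n$ of geodesics $[z_n,y_n]$, $[y_n,y_n']$, $[y_n',z_n']$, one verifies that $p_n$ has total length $\Theta(r_n)$ with endpoints on $Z$, and that the lower bound $d(z_n,z_n')\geq c\,r_n/2$ forces the endpoints of $p_n$ to be a definite fraction of its length apart; this is enough to make $p_n$ a $(K,\epsilon)$-quasi-geodesic with constants depending only on $c$. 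Because $Z$ is $N$-Morse, $p_n$ must lie in the $N(K,\epsilon)$-neighbourhood of $Z$, contradicting $d(x_n,Z)\geq r_n\to\infty$ at its midpoint; hence $\rho$ is sublinear and $Z$ is $\rho$-contracting. Tracking these constants produces the quantitative dependence of $\rho$ on $N$ required by the effective statement.

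The main obstacle is in the $(1)\Rightarrow(2)$ direction, namely the verification that the assembled path $p_n$ is a uniform quasi-geodesic: one must rule out that the three geodesic legs backtrack against one another, which is precisely where the lower bound $d(z_n,z_n')\geq c\,r_n/2$ on the separation of the projections is essential. A related subtle point is the absorption step in $(2)\Rightarrow(1)$, which is possible only because of $\lim_{r\to\infty}\rho(r)/r=0$; without sublinearity the argument would only produce a tautology. In both cases, the bounds obtained depend quantitatively only on the original defining function of the opposite property, which gives the effective equivalence asserted in the theorem.
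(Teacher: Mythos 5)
First, a remark on the comparison itself: the paper does not prove Theorem \ref{Cont} at all. It is quoted as Theorem 1.4 of \cite{arzhan} and used as a black box, so there is no internal proof to measure your argument against. Judged on its own terms, your proposal contains genuine gaps in both directions.

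In $(2)\Rightarrow(1)$ the concluding step is circular. Taking the maximal subinterval $[a,b]$ on which $d(q(\cdot),Z)\geq D/2$ and summing the projection jumps does yield $b-a\leq 2K(D+\epsilon)$ once $\rho(D)/D$ is small, but ``feeding this back'' into the quasi-isometry inequality for the sub-arc from $q(a)$ to the maximiser $q(t_0)$ only gives $D/2-\epsilon\leq d(q(a),q(t_0))\leq K(b-a)+\epsilon\leq 2K^2(D+\epsilon)+\epsilon$, which holds vacuously for every $K\geq 1$ and bounds nothing; geometrically, a quasi-geodesic that travels out to distance $D$ and returns in time $\asymp D$ is consistent with every inequality you have written. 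The standard repair is to run the excursion argument with a \emph{fixed} threshold $\kappa(\rho,K,\epsilon)$ in place of $D/2$, and then one can no longer bound each projection jump crudely by $\rho(D)$: one needs the finer subdivision underlying the Geodesic Image Theorem (Theorem \ref{GIT}), in which the step lengths equal the local distance $d(x_i,Z)$ and sublinearity is used to control $\sum_i\rho(d(x_i,Z))$ rather than (number of steps)$\times\rho(D)$. That estimate is the actual engine of this implication and it is missing from your sketch.

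In $(1)\Rightarrow(2)$ the obstacle you yourself flag is not resolved by the separation bound $d(z_n,z_n')\geq c\,r_n/2$. Since your $\rho(r)$ is a supremum over all $x$ with $d(x,Z)\geq r$, its finiteness for fixed $r$ is not automatic (a priori $\operatorname{diam}\pi_Z(B(x,r))\leq 2d(x,Z)+4r$, which is unbounded over that supremum), and the extremal configurations may have $s_n:=d(x_n,Z)\gg r_n$; then the two long legs $[z_n,y_n]$ and $[y_n',z_n']$ have length $\asymp s_n$ while the endpoints of $p_n$ are only $O(r_n)$ apart, so no constants $(K,\epsilon)$ depending only on $c$ make $p_n$ a quasi-geodesic. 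Even after normalising $s_n\asymp r_n$, the two legs can fellow-travel through a common bottleneck before diverging towards $z_n$ and $z_n'$, producing points on distinct legs that are arbitrarily close in $X$ but far apart in arc length along $p_n$; separation of the endpoints alone does not exclude this. Closing this gap is essentially the whole content of the implication, and the argument in \cite{arzhan} does not use the naive three-leg concatenation but passes through intermediate characterisations (recurrence/superlinear divergence) and first controls how $\pi_Z$ varies along the geodesic from $x$ to its projection.
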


Examples of contracting(or Morse) quasi geodesics include
quasi-geodesics in hyperbolic spaces, axis of pseudo-Anosov mapping classes in
Teichmuller space(\cite{Minsky}) etc.

\textit{Notation:} If $f$ and $g$ are functions then we say $f \preceq g$ if there
exists a constant $C > 0$ such
that $f(x) \leq Cg(Cx + c) + C$ for all $x$ . If $f \preceq g$ and $f \succeq g$
then we write $f\asymp g$.

\begin{lemma}(Lemma 6.3 of \cite{arzhan})\label{lemma1}
Given a sublinear function $\rho$ and a constant $C \geq 0$ there exists a
sublinear function $\rho' \asymp \rho$ such that if $Z \subseteq X$ and $Z' \subseteq X$ have
Hausdorff distance at most $C$ and $Z$ is $\rho$–contracting then $Z'$ is $\rho'$–contracting.
\end{lemma}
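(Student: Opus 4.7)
The plan is to prove the lemma by transferring the contracting condition from $Z$ to $Z'$ through approximation: every projection point to $Z'$ is within $C$ of some point of $Z$, and vice versa, so I want to replace the relevant $Z'$-projections with nearby $Z$-projections and apply the contracting hypothesis for $Z$.

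First I would fix $x,y \in X$ with $d(x,y) \leq d(x,Z')$, set $r := d(x,Z')$, and choose $z'_x \in \pi_{Z'}(x)$ and $z'_y \in \pi_{Z'}(y)$. By the Hausdorff distance hypothesis, pick companions $z_x, z_y \in Z$ with $d(z'_i, z_i) \leq C$. Since Hausdorff distance is symmetric, $|d(w,Z) - d(w,Z')| \leq C$ for every $w \in X$; in particular $r - C \leq d(x,Z) \leq r+C$, and $d(x,z_x) \leq r+C$, and analogous estimates hold for $y$ (noting $d(y,z'_y) \leq d(y,z'_x) \leq 2r$). It suffices to bound $d(z_x, z_y)$, since $d(z'_x, z'_y) \leq d(z_x, z_y) + 2C$.

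For the small regime $r \leq 2C$, the triangle inequality gives $d(z'_x, z'_y) \leq 4r \leq 8C$, a constant that will be absorbed into $\rho'$. For $r \geq 2C$ I would use the contracting property of $Z$ twice. Let $p_x \in \pi_Z(x)$ and $p_y \in \pi_Z(y)$. The inequalities $d(x,y), d(x,z_x), d(y,z_y) \leq r + C$ exceed $d(x,Z)$ or $d(y,Z)$ by at most $C$, which is at most a bounded multiple of $d(x,Z)$ (since $d(x,Z) \geq r - C \geq r/2 \geq C$). This allows me to invoke the standard iterated version of the contracting definition, namely: if $Z$ is $\rho$-contracting and $d(x,w) \leq K\, d(x,Z)$ then $\mathrm{diam}(\pi_Z(x) \cup \pi_Z(w)) \leq \rho''(d(x,Z))$ for some $\rho'' \asymp \rho$ depending only on $K$. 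Applied with $w = y$, and then with $w = z_x$ and $w = z_y$ (noting $\pi_Z(z_x) = \{z_x\}$ and likewise for $z_y$), I obtain $d(p_x, p_y)$, $d(p_x, z_x)$, $d(p_y, z_y) \leq \rho''(r+C)$. The triangle inequality then gives
\[
d(z'_x, z'_y) \leq 2C + d(z_x, p_x) + d(p_x, p_y) + d(p_y, z_y) \leq 2C + 3\rho''(r+C).
\]
Define $\rho'(r) := 2C + 3\rho''(r+C) + 8C$; this is sublinear, and $\rho' \asymp \rho$ since $\rho'' \asymp \rho$ and the additive $C$ shift is absorbed by the $\asymp$ relation.

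The main obstacle is handling the slack $|d(\cdot,Z) - d(\cdot,Z')| \leq C$, because the contracting definition requires the exact ordering $d(x,y) \leq d(x,Z)$. The argument is reduced to the iterated contracting statement, which is standard but needs the $r \geq 2C$ assumption so that $C$ is a bounded fraction of $d(x,Z)$; the remaining small-$r$ case is trivial by the triangle inequality. Finally, verifying $\rho' \asymp \rho$ reduces to checking that $\rho(r+C) \preceq \rho(r)$ and $\rho(r) \leq \rho'(r)$, both immediate from the definition of $\asymp$ and monotonicity of $\rho$.
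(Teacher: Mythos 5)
The paper does not actually prove this statement --- it is quoted as Lemma 6.3 of Arzhantseva--Cashen--Gruber--Hume and used as a black box --- so your proposal can only be judged on its own terms. Its architecture (replace $Z'$-projections by nearby $Z$-projections, split into a small-$r$ and a large-$r$ regime, then chain together contraction estimates for the pairs $(x,y)$, $(x,z_x)$, $(y,z_y)$) is sound. The genuine problem is the pivotal fact you invoke: the claim that $d(x,w)\le K\,d(x,Z)$ with $K>1$ forces $\mathrm{diam}(\pi_Z(x)\cup\pi_Z(w))\le\rho''(d(x,Z))$ for some $\rho''\asymp\rho$ is \emph{false}. Take $Z$ to be the vertical axis in the upper half-plane model of $\mathbb{H}^2$ (so $\rho$ is bounded), $x=(s,1)$ and $w=(s,s^2)$: then $d(x,Z)=\operatorname{arcsinh}(s)\ge\log(2s)$ while $d(x,w)\approx 2\log s$, so $d(x,w)\le 2\,d(x,Z)$ for large $s$, yet the projections $(0,\sqrt{s^2+1})$ and $(0,\sqrt{s^2+s^4})$ are exactly $\log s$ apart --- linear, not sublinear, in $d(x,Z)$. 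A multiplicative relaxation of the contracting condition destroys sublinearity; what survives in general is only a coarse-Lipschitz bound $A\,d(x,w)+B$.

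Fortunately, your three applications never need the multiplicative statement: in each case the slack is additive, namely $d(x,y)\le d(x,Z)+C$, $d(x,z_x)\le d(x,Z)+2C$, and $d(y,z_y)\le d(y,Z')+C\le d(y,Z)+2C$ (this last bound should be routed through $d(y,Z')$, which can be as large as $2r$, rather than through your stated `$d(y,z_y)\le r+C$'; the additive excess over $d(y,Z)$ is still only $2C$). The additive version of the iterated claim is true and is what you should state and prove: if $d(x,w)\le d(x,Z)+C'$, cut a geodesic $[x,w]$ at the point $m$ with $d(x,m)=d(x,Z)$; the contraction hypothesis bounds $\mathrm{diam}(\pi(x)\cup\pi(m))$ by $\rho(d(x,Z))$, and the leftover segment of length at most $C'$ contributes at most $\rho(2d(x,Z))$ when $d(m,Z)\ge C'$ and at most the constant $4C'+\rho(C')+\rho(2C')$ otherwise. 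With this corrected ingredient your chain of inequalities closes, all bounds are of the form $2\rho(2(2r+C))+\mathrm{const}$, and $\rho'\asymp\rho$ follows as you say. So the gap is concrete --- reliance on a false ``standard'' lemma at the one place where the proof does real work --- but it is local and repairable.
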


\begin{lemma}(Lemma 3.6 of \cite{cashen2017})\label{Clemma}
Given a sublinear function $\rho$ there is a sublinear function $\rho' \asymp \rho$
such that every subsegment of a $\rho$-contracting geodesic is $\rho'$-contracting.
\end{lemma}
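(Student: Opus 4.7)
The plan is to deduce the contracting property of the subsegment $\gamma'$ from that of the ambient $\rho$-contracting geodesic $\gamma$ by carefully comparing closest-point projections. Let $\gamma : I \to X$ and $\gamma' = \gamma|_{I'}$ with endpoints $p, q$ where applicable. For $z \in X$, write $\pi(z) \subseteq \gamma$ and $\pi'(z) \subseteq \gamma'$ for the projections, and set $r(z) = d(z, \gamma)$ and $r'(z) = d(z, \gamma')$, so $r(z) \leq r'(z)$.

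First I would dispose of the easy case: if both $\pi(x) \cap \gamma' \neq \emptyset$ and $\pi(y) \cap \gamma' \neq \emptyset$, then $r = r'$ at $x$ and $y$, and $\pi'(\cdot) \subseteq \pi(\cdot)$, so the hypothesis $d(x, y) \leq r'(x)$ becomes $d(x, y) \leq r(x)$ and the $\rho$-contracting hypothesis on $\gamma$ directly bounds $\operatorname{diam}(\pi'(x) \cup \pi'(y))$ by $\rho(r'(x))$. The main work is the case where $\pi(z)$ lies entirely outside $\gamma'$, say past $p$. For such $z$, I would show $\pi'(z)$ lies within a sublinear-in-$r'(z)$ neighborhood of $p$: consider the geodesic $\sigma$ from $z$ to a candidate $w \in \pi'(z)$ and project it pointwise onto $\gamma$; by iteratively applying $\rho$-contracting on short sub-intervals of $\sigma$ whose length is bounded by the local distance to $\gamma$, one can telescope the $\gamma$-parameter drift of $\pi(\sigma(\cdot))$ into a sum of $\rho$-increments which, by sublinearity of $\rho$, stays sublinear in the length $r'(z)$ of $\sigma$. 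Since this drift runs from $\pi(z)$ through $p$ to $\{w\}$, this yields $d(p, w) \leq \rho''(r'(z))$ for some $\rho'' \asymp \rho$.

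Combining the two cases, and invoking Lemma \ref{almosttriangle} and Lemma \ref{lemma1} to absorb the constants into a single $\rho'$, gives the desired $\rho'$-contracting property for $\gamma'$. The mixed case --- where one of $x, y$ projects inside $\gamma'$ and the other outside --- is handled by combining both regimes: use the sublinear comparison from the second step to relate $\pi'$ to $\pi$ up to sublinear error, then apply $\rho$-contracting of $\gamma$ to $(x, y)$. The main obstacle I expect is the second step: controlling the parameter drift of $\pi(\sigma(\cdot))$ sublinearly. A naive triangle inequality yields only a linear bound in $r'(z)$, so the telescoping must subdivide $\sigma$ carefully in accordance with the local distance to $\gamma$, and one must ensure the final $\rho'$ depends only on $\rho$ and not on the length of $\gamma'$. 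An alternative route is to pass through Theorem \ref{Cont} and prove the analogous Morse statement (every subsegment of an $N$-Morse geodesic is $N'$-Morse), where the analogous difficulty reappears as bounding how far a quasi-geodesic with endpoints on $\gamma'$ can wander in the $\gamma$-parameter beyond $I'$.
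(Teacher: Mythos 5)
This is Lemma 3.6 of \cite{cashen2017}; the paper does not prove it but imports it by citation, so there is no internal proof to compare against. Judged on its own terms, your outline has the right architecture --- split according to whether $\pi_\gamma(x)$ lands inside the subsegment $\gamma'$ or beyond an endpoint $p$, handle the first case by the observation that $d(x,\gamma)=d(x,\gamma')$ and $\pi_{\gamma'}(x)\subseteq\pi_\gamma(x)$, and reduce the second case to showing $\pi_{\gamma'}(z)$ lies sublinearly close to $p$ --- but the step carrying all the content is exactly the one you leave unproved, and the mechanism you propose for it does not work as stated.

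The claim that the ``sum of $\rho$-increments \ldots by sublinearity of $\rho$, stays sublinear in the length $r'(z)$ of $\sigma$'' is false in general: if you subdivide $\sigma$ into pieces whose lengths are governed by the local distance to $\gamma$, the pieces become short and numerous as $\sigma$ approaches $\gamma$ near its endpoint $w\in\pi_{\gamma'}(z)$, and a sum of many terms $\rho(\ell_i)$ with $\sum_i\ell_i\leq r'(z)$ can be linear in $r'(z)$ (already for $\rho$ a positive constant, which is sublinear). Sublinearity of $\rho$ controls each increment, not their number. The standard repair is precisely Theorem \ref{GIT}: let $y$ be the first point of $\sigma=[z,w]$ with $d(y,\gamma)\leq\kappa_\rho$; the Geodesic Image Theorem applied to $[z,y]$ gives $\operatorname{diam}\pi_\gamma([z,y])\leq\rho(d(z,\gamma))$ in one stroke, so $\pi_\gamma(y)$ drifts from $\pi_\gamma(z)$ by at most $\rho(d(z,\gamma))\leq\rho(r'(z))$; the remaining segment $[y,w]$ is then handled by a direct triangle-inequality comparison using $d(z,w)=d(z,\gamma')\leq d(z,\gamma(a))$ (with $\gamma(a)=p$ the endpoint of $\gamma'$), which forces $d(w,p)\leq 2\rho(r'(z))+2\kappa_\rho$. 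Without some such device your argument does not close, and your fallback via Theorem \ref{Cont} (subsegments of Morse geodesics are Morse) runs into the same issue, as you note: bounding how far past $p$ a quasi-geodesic with endpoints on $\gamma'$ can wander along $\gamma$ by naive quasi-geodesic inequalities yields only a bound linear in the distance between its endpoints. Finally, Lemma \ref{almosttriangle} and Lemma \ref{lemma1} are not the right tools to ``absorb the constants'' here --- $\gamma'$ is not at bounded Hausdorff distance from $\gamma$ and no geodesic triangle is in play; once the projection comparison above is established, the definition of $\rho'$-contracting for $\gamma'$ follows directly.
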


\begin{theorem}(Geodesic Image Theorem, \cite{arzhan}) \label{GIT}
For $Z \subseteq X$, Z is $\rho'$-contracting if and only if there exists a sublinear function $\rho$ and a constant $\kappa_{\rho}$ such that for every geodesic segment $\gamma$, with end-points denoted by $x$ and $y$, if $d(\gamma,Z) \geq \kappa_{\rho}$ then diam ($\pi_{Z}(\gamma)) \leq \rho(\mbox{max}\{d(x,Z),d(y,Z)\})$.
Moreover $\rho$ and $\kappa_{\rho}$ depend only on $Z$ and vice-versa, with $\rho \asymp \rho'$.
\end{theorem}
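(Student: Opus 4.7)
The plan is to prove each implication of the biconditional in turn. The forward direction, deducing the geodesic-image statement from contraction, is the substantive part; for the reverse, I would exploit the equivalence of Morse and contracting (Theorem \ref{Cont}) and show instead that the geodesic-image property implies $Z$ is Morse.

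For $(\Rightarrow)$, assume $Z$ is $\rho'$-contracting and let $\gamma:[0,L]\to X$ be a geodesic from $x$ to $y$ with $d(\gamma,Z)\geq \kappa$, for a constant $\kappa$ to be fixed at the end. Parametrize by arc length and put $f(t)=d(\gamma(t),Z)$, which is $1$-Lipschitz. I would choose a finite sequence of sample points $x=p_0,p_1,\dots,p_n=y$ along $\gamma$ with consecutive spacings $d(p_i,p_{i+1})\leq f(t_i)$, where $t_i$ is the parameter of $p_i$. The contracting definition then applies to each pair, giving
\[
\text{diam}(\pi_Z(p_i)\cup \pi_Z(p_{i+1}))\leq \rho'(f(t_i)),
\]
and the triangle inequality yields $\text{diam}(\pi_Z(\gamma))\leq \sum_{i}\rho'(f(t_i))$. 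Taking step sizes proportional to $f(t_i)$ makes the samples probe the range of $f$ dyadically, and sublinearity of $\rho'$ then bounds the resulting sum by a constant multiple of $\rho'(M)$, where $M=\max_t f(t)$.

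The principal obstacle is controlling $M$ in terms of $\max\{d(x,Z),d(y,Z)\}$, since a priori $\gamma$ could wander arbitrarily far from $Z$ before returning. This must be handled using the contracting hypothesis itself: a long excursion of $\gamma$ away from $Z$ would have to project to a small arc of $Z$, whereas the endpoints $x,y$ project within $d(x,Z)$ and $d(y,Z)$ of themselves, and these two facts constrain the geometry near the turning points of the excursion. Quantifying this gives $M\leq C\max\{d(x,Z),d(y,Z)\}$ for a constant $C$ depending on $\rho'$ and $\kappa$, after which the sum estimate packages into a sublinear $\rho\asymp\rho'$ witnessing the geodesic-image property.

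For $(\Leftarrow)$, by Theorem \ref{Cont} it suffices to show that the geodesic-image property forces $Z$ to be Morse. Take a $(K,\epsilon)$-quasi-geodesic $q$ with endpoints on $Z$; one must show $q$ remains in a bounded neighbourhood of $Z$ depending only on $K$ and $\epsilon$. If $q$ strayed a long way from $Z$, I would isolate a maximal subsegment of $q$ lying outside the $\kappa_\rho$-neighbourhood of $Z$, approximate it by the geodesic between its endpoints (which stays outside a slightly smaller neighbourhood by a standard thinness argument for quasi-geodesics), and apply the geodesic-image hypothesis to obtain a sublinear bound on the projection of this geodesic to $Z$. Comparing this bound with the distance along $Z$ between the original endpoints of $q$, which project near themselves, forces a contradiction once the excursion exceeds a threshold depending on $K,\epsilon,\rho,\kappa_\rho$. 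Extracting this threshold yields the Morse gauge for $Z$.
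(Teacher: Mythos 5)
First, a point of reference: the paper does not prove this statement. It is imported verbatim from \cite{arzhan} (it also appears as Theorem 3.4 of \cite{cashen2017}), so there is no in-paper argument to compare yours against; your proposal has to stand on its own. Your reverse direction is the standard route (geodesic-image bound $\Rightarrow$ Morse, then invoke Theorem \ref{Cont}), and in outline it is fine, although the step where you replace an excursion of the quasi-geodesic by the geodesic between its endpoints and assert that this geodesic ``stays outside a slightly smaller neighbourhood by a standard thinness argument'' is not available: before you know $Z$ is Morse there is no thinness to appeal to, and a geodesic joining two points far from $Z$ may well dive into $Z$. The usual fix is to subdivide the quasi-geodesic and apply the hypothesis only to those connecting geodesics that do stay $\kappa_\rho$-far from $Z$, treating the others separately; this is repairable.

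The forward direction, however, has a genuine gap at its central step. You claim that the telescoped sum $\sum_i \rho'(f(t_i))$, with spacings $d(p_i,p_{i+1})\leq f(t_i)$, is bounded by a constant multiple of $\rho'(M)$ ``by sublinearity.'' This is false: the function $f$ need not vary at all, so the samples need not ``probe the range of $f$ dyadically.'' If $\gamma$ runs at distance roughly $\kappa_\rho$ from $Z$ for its whole length $L$, your sum has about $L/\kappa_\rho$ terms, each equal to $\rho'(\kappa_\rho)$, and grows linearly in $L$, while $\rho'(M)=\rho'(\kappa_\rho)$ is a constant; nothing in your sketch bounds $L$. One can bootstrap (choose $\kappa_\rho$ so that $3\rho'(r)\leq r$ for $r\geq\kappa_\rho$, deduce $\mathrm{diam}\,\pi_Z(\gamma)\leq L/3$, combine with $L\leq d(x,Z)+d(y,Z)+\mathrm{diam}\,\pi_Z(\gamma)$ to get $L\preceq\max\{d(x,Z),d(y,Z)\}$, which also yields your bound on $M$), but even then the summation only produces a bound \emph{linear} in $\max\{d(x,Z),d(y,Z)\}$. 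The theorem asserts the \emph{sublinear} bound $\rho(\max\{d(x,Z),d(y,Z)\})$ with $\rho\asymp\rho'$, and no amount of rearranging a sum of $\Theta(L/\kappa_\rho)$ terms each of size at least $\rho'(\kappa_\rho)$ will deliver that. The sublinear improvement is the actual content of the theorem and requires a different mechanism (this is why the paper, and Cashen--Mackay, defer to the argument in \cite{arzhan} rather than reprove it); as written, your proof establishes a strictly weaker statement.
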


\begin{theorem}(Quasi-geodesic Image Theorem, Theorem 4.2 \cite{cashen2017}) \label{QGIT} Let $Z \subseteq X$ be $\rho$-contracting.
Let $\beta :$$[0, T] \rightarrow X$ be a continuous $(L,A)$-quasi-geodesic segment. If $d(\beta,Z)\geq
\kappa(\rho, L,A)$ then:
$$ \mbox{diam}(\pi(\beta_0)\cup\pi(\beta_T))\leq \frac{L^2+1}{L}(A+d(\beta_T,Z))+\frac{L^2-1}{L}d(\beta_0,Z)+2\rho(d(\beta_0,Z))$$
\end{theorem}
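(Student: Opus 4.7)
The plan is to bootstrap the quasi-geodesic estimate from the defining inequality of $\rho$-contraction (Definition~\ref{ContractingSub}) via a subdivision argument, and then close the resulting implicit inequality using the quasi-geodesic inequality together with a triangle inequality that detours through $Z$.

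First, using continuity of $\beta$ and $d(\beta, Z) \geq \kappa > 0$, I would greedily partition $[0, T]$ as $0 = t_0 < t_1 < \cdots < t_n = T$ by letting $t_{i+1}$ be the supremum of those $t > t_i$ with $d(\beta(t_i), \beta(s)) \leq d(\beta(t_i), Z)$ for all $s \in [t_i, t]$. Continuity forces $d(\beta(t_i), \beta(t_{i+1})) = d(\beta(t_i), Z) \geq \kappa$ whenever $t_{i+1} < T$, and the upper quasi-geodesic inequality $d(\beta(t_i), \beta(t_{i+1})) \leq L(t_{i+1} - t_i) + A$ gives $t_{i+1} - t_i \geq (\kappa - A)/L$, so the partition is finite. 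The defining contraction inequality then yields on each subinterval
\[
\mathrm{diam}\bigl(\pi(\beta(t_i)) \cup \pi(\beta(t_{i+1}))\bigr) \leq \rho(d(\beta(t_i), Z)),
\]
and telescoping gives $\mathrm{diam}(\pi(\beta_0) \cup \pi(\beta_T)) \leq \sum_{i=0}^{n-1} \rho(d(\beta(t_i), Z))$.

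Next, I would absorb the tail of the sum into a linear expression. Sublinearity of $\rho$ means that for any $\epsilon > 0$, one can take $\kappa(\rho, L, A)$ large enough that $\rho(r) \leq \epsilon r$ for $r \geq \kappa$; the tail $\sum_{i \geq 1} \rho(d(\beta(t_i), Z))$ is then bounded by $\epsilon \sum_{i \geq 1} d(\beta(t_i), Z)$, which, via the greedy equality $d(\beta(t_i), Z) = d(\beta(t_i), \beta(t_{i+1}))$, becomes $\epsilon \sum d(\beta(t_i), \beta(t_{i+1})) \leq \epsilon(LT + nA)$. The quasi-geodesic inequality gives $T \leq L \cdot d(\beta_0, \beta_T) + LA$, and the triangle inequality $d(\beta_0, \beta_T) \leq d(\beta_0, Z) + \mathrm{diam}(\pi(\beta_0) \cup \pi(\beta_T)) + d(\beta_T, Z)$ re-expresses the right-hand side through the very quantity we are bounding. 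For $\epsilon$ small enough, the resulting implicit inequality for $\mathrm{diam}(\pi(\beta_0) \cup \pi(\beta_T))$ can be solved, producing a bound linear in $d(\beta_0, Z)$, $d(\beta_T, Z)$, and $A$ plus an $\rho(d(\beta_0, Z))$ term.

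The main obstacle is recovering the precise coefficients $(L^2 \pm 1)/L = L \pm L^{-1}$ on the distance terms along with the correct additive $A$ contributions. These arise from careful bookkeeping of where factors of $L$ and shifts by $A$ enter the upper and lower quasi-geodesic inequalities, and the asymmetry between $d(\beta_0, Z)$ and $d(\beta_T, Z)$ reflects the asymmetry of the contracting inequality, which uses $\rho$ evaluated at $d(\beta(t_i), Z)$ (the starting endpoint of each piece) rather than $d(\beta(t_{i+1}), Z)$. The solitary $2\rho(d(\beta_0, Z))$ term in the statement collects the $i = 0$ boundary contribution of the $\rho$-sum, which cannot be absorbed into the linear estimate since $d(\beta_0, Z)$ can be as small as $\kappa$; the factor of $2$ comes from handling both endpoint projections in the final triangle estimate.
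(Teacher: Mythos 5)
The paper itself gives no proof of this statement; it is imported verbatim as Theorem 4.2 of \cite{cashen2017}, so the only meaningful comparison is with the proof there. Your outline --- greedy subdivision of $[0,T]$ into maximal pieces on which the contraction hypothesis $d(x,y)\le d(x,Z)$ applies, telescoping the projection diameters, absorbing $\sum_{i\ge 1}\rho(r_i)$ using the clause $3L^2\rho(r)\le r$ for $r\ge \kappa(\rho,L,A)$ built into Definition \ref{Morse Constant}, and then closing the resulting implicit inequality via $T\le L\bigl(d(\beta_0,\beta_T)+A\bigr)$ and the detour bound $d(\beta_0,\beta_T)\le d(\beta_0,Z)+\mathrm{diam}\bigl(\pi(\beta_0)\cup\pi(\beta_T)\bigr)+d(\beta_T,Z)$ --- is exactly the Cashen--Mackay argument, and it does work. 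Two points need care when you write it out. First, the greedy equality $d(\beta(t_i),Z)=d(\beta(t_i),\beta(t_{i+1}))$ holds only when $t_{i+1}<T$; on the final subinterval one has merely $d(\beta(t_{n-1}),Z)\le d(\beta(t_{n-1}),\beta_T)+d(\beta_T,Z)$, and this is precisely where the $d(\beta_T,Z)$ contribution to the bound originates --- your sketch silently asserts the equality for all $i\ge 1$. Second, the factor $2$ on $\rho(d(\beta_0,Z))$ comes from dividing by $1-\tfrac12$ when solving the implicit inequality (with $\epsilon=1/(3L^2)$ the absorbed sum contributes $\tfrac12\,\mathrm{diam}$), not from ``handling both endpoint projections''; and the bookkeeping as you describe it produces coefficients such as $1$ and $1+2/(3L^2)$ rather than $(L^2\mp 1)/L$, so the inequality with the literal stated constants is not recovered without redoing the accounting as in \cite{cashen2017}. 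Since the present paper uses the theorem only qualitatively (uniform boundedness of projection diameters), this last discrepancy is harmless for the applications here, but it means your proposal proves a theorem of the same shape rather than the exact statement.
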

\subsection{Bordification of group}
\begin{definition}\label{Morse Constant}
Given a sublinear function $\rho$ and constants $L \geq 1$ and $A\geq0$, define:
       $$ k(\rho,L,A):=\mbox{max}\{3A,3L^{2},1+inf\{R>0|\mbox{ for all } r\geq
R,3L^{2}\rho(r)\leq r\} \}$$
       Define:
       $$ k'(\rho,L,A):= (L^{2}+2)(2k(\rho,L,A)+A).$$
\end{definition}

\begin{definition}(Contracting Boundary) Let $X$ be a proper geodesic metric space
with basepoint $o$. Contracting Boundary of $X$, denoted by $\partial_{c}X$, defined to be the set of contracting quasi-geodesic rays based at $o$
modulo Hausdorff equivalence.
\end{definition}

\begin{proposition}(Lemma 5.2 of \cite{cashen2017}) For each $\zeta \in \partial_{c}X$:
     \begin{enumerate}
     \item The set of contracting geodesic rays in $\zeta$ is non-empty.
     \item There is a sublinear function:
     $$ \rho_{\zeta}(r):=\underset{\alpha,x,y}{sup}\ \mbox{diam}\big( \pi_{\alpha}(x)\cup
\pi_{\alpha}(y) \big)$$
     Here the supremum is taken over geodesics $\alpha \in \zeta$ and points x and y
such
     that $d(x, y) \leq d(x, \alpha) \leq r$
     \item Every geodesic in $\zeta$ is $\rho_{\zeta}$-$contracting$.
     \end{enumerate}
\end{proposition}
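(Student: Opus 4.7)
My plan is to handle part (1) by an Arzel\`a--Ascoli compactness argument, and parts (2) and (3) by reducing them to a single auxiliary claim: every geodesic ray inside $\zeta$ fellow-travels the specific ray produced in (1) with a bound depending only on its contracting function.

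For part (1), choose any contracting quasi-geodesic representative $\gamma\in\zeta$; by Theorem~\ref{Cont} it is Morse with some gauge $N$. For each $n\in\mathbb{N}$ let $\alpha_n$ be a geodesic segment from $o$ to $\gamma(n)$. Morse-ness forces every $\alpha_n$ to lie in the $N(1,0)$-neighbourhood of $\gamma$, so the family is pointwise bounded; since $X$ is proper and the $\alpha_n$ are $1$-Lipschitz, Arzel\`a--Ascoli extracts a subsequence converging on compacta to a geodesic ray $\alpha$. The limit inherits the same bounded neighbourhood inclusion, and the reverse Hausdorff inclusion follows by applying Morse-ness to the quasi-geodesics $\gamma|_{[0,n]}$ (whose endpoints sit on the $\alpha_n$) and passing to the limit. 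Hence $\alpha$ and $\gamma$ have finite Hausdorff distance, so $\alpha\in\zeta$, and Lemma~\ref{lemma1} certifies that $\alpha$ is $\rho_0$-contracting for some sublinear $\rho_0$.

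For parts (2) and (3) I would reduce to the claim that every geodesic ray $\alpha\in\zeta$ lies in a neighbourhood of $\alpha_0$ whose radius $B$ depends only on $\rho_0$, not on $\alpha$. Granting this, Lemma~\ref{lemma1} upgrades $\rho_0$ to a sublinear $\rho'\asymp\rho_0$ such that \emph{every} such $\alpha$ is $\rho'$-contracting. Hence whenever $d(x,y)\le d(x,\alpha)\le r$ one has $\mathrm{diam}\bigl(\pi_\alpha(x)\cup\pi_\alpha(y)\bigr)\le\rho'(r)$; taking the supremum over all admissible triples $(\alpha,x,y)$ shows that $\rho_\zeta$ is a finite-valued function dominated by the sublinear $\rho'$, proving (2). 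Part (3) is then immediate from the definition of $\rho_\zeta$ as a supremum: for each fixed $\alpha\in\zeta$ and each pair $(x,y)$ with $d(x,y)\le d(x,\alpha)$, the quantity $\mathrm{diam}\bigl(\pi_\alpha(x)\cup\pi_\alpha(y)\bigr)$ is one of the terms over which the sup is taken, so it is bounded by $\rho_\zeta(d(x,\alpha))$.

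The main obstacle is the uniform-neighbourhood claim. The naive argument---close $\alpha|_{[0,T]}$ by the short segment $[\alpha(T),\alpha_0(s(T))]$ into a $(1,2D)$-quasi-geodesic with endpoints on $\alpha_0$ and apply Morse-ness---yields only a bound involving $D=d_H(\alpha,\alpha_0)$, which varies with $\alpha$. To eliminate this dependence I would argue directly via Theorem~\ref{GIT}: if some $\alpha(t_0)$ realises $d(\alpha(t_0),\alpha_0)=R\gg\kappa_{\rho_0}$, then Theorem~\ref{GIT} controls the projection to $\alpha_0$ of the maximal subsegment of $\alpha$ around $t_0$ lying outside the $\kappa_{\rho_0}$-neighbourhood of $\alpha_0$ by $\rho_0(R)$; comparing this with the intrinsic length of that subsegment, using that $\alpha(0)=o\in\alpha_0$, forces an inequality of the form $R\le\rho_0(R)+C$ for a constant $C=C(\rho_0)$, and sublinearity of $\rho_0$ then bounds $R$ by a constant $B=B(\rho_0)$. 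This is the core step; the remainder of the argument is bookkeeping.
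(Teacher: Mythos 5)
The paper does not actually prove this statement: it is imported verbatim as Lemma~5.2 of \cite{cashen2017}, so there is no internal proof to compare against. Judged on its own terms, your reconstruction is essentially correct, and your reduction of (2) and (3) to a uniform fellow-travelling claim is exactly the right structure. The main remark is that the ``core step'' you identify as the main obstacle is already a one-line consequence of Proposition~\ref{Keyprop1} (Lemma~4.4 of \cite{cashen2017}), which the paper quotes: any geodesic ray $\alpha\in\zeta$ is a continuous $(1,0)$-quasi-geodesic ray based at $o$, asymptotic to the $\rho_0$-contracting ray $\alpha_0$ produced in part (1), with $d(\alpha_0(0),\alpha(0))=0\leq k(\rho_0,1,0)$; hence $d_{Haus}(\alpha,\alpha_0)\leq k'(\rho_0,1,0)$, a bound depending only on $\rho_0$. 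This gives directly the two-sided Hausdorff bound that Lemma~\ref{lemma1} requires (note that your claim, as stated, only controls $\sup_t d(\alpha(t),\alpha_0)$, whereas Lemma~\ref{lemma1} needs Hausdorff distance in both directions; the reverse inclusion is easy for geodesic rays from a common basepoint, but it is cleaner to get both at once from Proposition~\ref{Keyprop1}). Your hand-rolled argument via Theorem~\ref{GIT} does work --- the maximal excursion of $\alpha$ outside the $\kappa_{\rho_0}$-neighbourhood of $\alpha_0$ has endpoints at distance exactly $\kappa_{\rho_0}$ (it cannot contain $\alpha(0)=o\in\alpha_0$ and cannot be infinite since $\alpha,\alpha_0$ are asymptotic), so its length, hence its maximal distance to $\alpha_0$, is bounded by $2\kappa_{\rho_0}+\rho_0(\kappa_{\rho_0})$ --- but it buys nothing over the quoted lemma. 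One smaller imprecision in part (1): to get the reverse Hausdorff inclusion you invoke ``Morse-ness'' of the $\alpha_n$ applied to $\gamma|_{[0,n]}$, but Morse-ness of the geodesics $\alpha_n$ is not what you established; the correct (standard) justification is the coarse-surjectivity argument showing that a Morse quasi-geodesic and a geodesic with the same endpoints are at Hausdorff distance bounded in terms of the gauge. The conclusion is right, so this is a citation/bookkeeping issue rather than a gap.
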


Given two distinct point $\eta$ and $\zeta$ in $\partial_{c}X$, we can find a contracting bi-infinite geodesic, $\gamma:(-\infty,\infty)\rightarrow X$ such that $\gamma((-\infty,0])$ is asymptotic to $\eta$ and $\gamma([0,\infty))$ is asymptotic to $\zeta$. This $\gamma$ is said to be a geodesic joining $\eta$ and $\zeta$.

 \begin{definition}
 { (\textit{Cashen-Mackay} Topology, \cite{cashen2017}):}\\
 $Notation:$ $N^{c}_{r}o=\{x\in X|\ d(o,x)\geq r\}$\\
    Let $X$ be a proper geodesic metric space. Take  $\zeta \in \partial_{c}X$. Fix
a geodesic ray $\alpha^{\zeta} \in \zeta$.
    For each $r \geq 1$, define $U(\zeta, r)$ to be the set of points  $\eta \in
\partial_{c}X$ such that for all
    $L \geq 1$ and $A \geq 0$ and every continuous $(L,A)$-quasi-geodesic ray
$\beta \in \eta$ we have
    \begin{equation}
    \tag{*}
         d(\beta, \alpha^{\zeta} \cap N^{c}_{r}o) \leq k(\rho_{\zeta},L,A).
    \label{eqn:key1}
         \end{equation}
     Define a topology on $\partial_{c}X$ by
 \begin{equation}
\tag{**}
\mathcal{F}\mathcal{Q}:=\{U \subset \partial_{c}X\ |\ \text{for all}\ \zeta \in U,
\text{there exists}\ r \geq 1, U(\zeta, r)\subset U \}
\label{eqn:key2}
\end{equation}
     The contracting boundary equipped with this topology was called to be
\textit{topology of fellow travelling quasi-geodesics} by Cashen-Mackay
\cite{cashen2017} and was denoted by $\partial^{\mathcal{F}\mathcal{Q}}_{c}X$.
\end{definition}
\begin{definition}
A bordification of a Hausdorff topological space $X$ is a Hausdorff space $Y$ containing $X$ as an open, dense subset.
\end{definition}
\begin{definition}\label{bordif}Let $X$ be a proper geodesic metric space. Let $\bar{X}=X\cup \partial_{c}X$. We are going to define a topology on $\bar{X}$. For $x\in X$ take a neighborhood basis for $x$ to be metric balls about $x$. For $\zeta \in \partial_{c}X$ take a neighborhood basis
for $\zeta$ to be sets $\bar{U}(\zeta,r)$ consisting of $U(\zeta, r)$ and points $x\in X$ such that we have $d\big(\gamma,N^c_{r}o\cap \alpha^{\zeta}\big)
\leq \kappa(\rho_{\zeta},L,A)$ for every $L\geq1$, $A\geq0$ and continuous $(L,A)$ quasi-geodesic segment with endpoints $o$ and $x$.
\end{definition}
$\bar{X}$ under the above topology defines a bordification of $X$ (Proposition 5.15, \cite{cashen2017}).\\ \\
\noindent{\textbf{Notation:}} Henceforth, by $\partial_cX$ we mean $\partial^{\mathcal{F}\mathcal{Q}}_{c}X$.
\begin{definition}
(Limit set) \label{limit set} (i) Let $A\subset X$. The limit set of $A$ in $\partial_cX$ is defined to be $\Lambda_c(A):= \overline{A}\setminus A$, where $\overline{A}$ is the closure of $A$ in $X\cup\partial_cX$.\\
(ii) If $G$ is a finitely generated group acting
properly discontinuously
on a proper geodesic metric space $X$ with basepoint $o$ we define the $limit\ set\
\Lambda_c(G) :=\overline{Go} \backslash Go$,
the topological frontier of orbit of $o$ under the $G$-action in
$\bar{X}$.\end{definition}
\subsection{Cusped spaces and relatively hyperbolic groups}
\begin{definition}(Combinatorial horoball) (Definition 3.1 of \cite{GrovesManning}) Let $\Gamma$ be a graph. A combinatorial horoball based on $\Gamma$ is graph with vertices $\Gamma^{(0)}\times (\mathbb{N}\cup \{0\})$ and following three types of edges;
\begin{enumerate}
\item If e is an edge of $\Gamma$ joining $v$ to $w$ then there is a corresponding edge
$\bar{e}$ connecting $(v, 0)$ to $(w, 0)$.
\item If $k > 0$ and $0 < d_{\Gamma}(v,w)\leq 2^k$, then there is a single edge connecting
$(v, k)$ to $(w, k)$.
\item If $k \geq0$ and $v \in\Gamma^{(0)}$, there is an edge joining $(v, k)$ to $(v, k + 1)$.
  \end{enumerate}
 \end{definition}
 For  proper graphs, a combinatorial horoball is a proper hyperbolic geodesic metric space with a single point in the Gromov boundary. Henceforth,  by a horoball we will mean a combinatorial horoball.

\begin{definition}(Definition 3.12 of \cite{GrovesManning})\label{cuspedspace}. Let $G$ be a finitely generated group and $\mathcal{H}=\{H_i\}$ be a finite collection of finitely generated subgroups. Let $S$ be a finite generating set of $G$ such that for each $i$, $S_i:=S\cap H_i$ generates $H_i$. Let $\Gamma(G,S)$ denote the Cayley graph of $G$ with generating set $S$. For each $i$, attach a horoball to the Cayley graph $\Gamma(H_i,S_i) \subseteq \Gamma(G,S)$. Similarly attach a horoball to each left coset $gH_i$ and denote it by $gH_i^{h}$. Union of $G$ and these attached horoballs with the induced path metric, say $d$, from $\Gamma(G,S)$ and horoballs, is called $cusped$ $space$ for the triple $(G,\mathcal{H},S)$ and denoted by $G^{h}$.
\end{definition}

It is a standard fact that $(G^{h},d)$ is a proper geodesic metric space and $G$ acts properly discontinuously on it. We denote the combinatorial horoball attached along the left coset $gH_i$ by $gH_i^h$ and metric on it by $d_{gH_i^h}$.
\begin{remark}
Let $G$ be a finitely generated group and $\mathcal{H}=\{H_i\}$ be a finite collection of finitely generated subgroups. If $S,S'$ are two finite generating sets of $G$ such that $S\cap H_i,S'\cap H_i$ also generates $H_i$ then there exists a quasi-isometry $\phi:\Gamma(G,S)\to\Gamma(G,S')$ such that $\phi(H_i)=H_i$. Let $\Gamma(G,S))^h$, $\Gamma(G,S'))^h$ denote the cusped space  on $G$ with respect to the generating sets $S,S'$ respectively.
The map $\phi$ induces a quasi-isometry $\phi^h:(\Gamma(G,S))^h\to (\Gamma(G,S'))^h$ (See Lemma 1.2.31 of \cite{PalThesis}). Thus, the contracting boundaries $\partial_c(\Gamma(G,S))^h$
and $\partial_c(\Gamma(G,S'))^h$ are homeomorphic to each other.

\end{remark}

\begin{definition}(Section 3.3 of \cite{GrovesManning})(Relatively hyperbolic group) $G$ is hyperbolic relative to a finite collection of finitely generated subgroup $\mathcal{H}=\{H_i\}$ if for some finite generating set $S$, cusped space $(G,\mathcal{H},S)$ is a hyperbolic space.\par
A finitely generated group $G$ is said to be \textit{relatively hyperbolic group}, if there exists some finite collection $\mathcal{H}$ of finitely generated subgroups of $G$ such that $G$ is hyperbolic relative to $\mathcal{H}$.
\end{definition}

\subsection{Some useful results}
\begin{proposition}(Lemma 4.4, \cite{cashen2017})\label{Keyprop1} Suppose
$\alpha$ is a continuous, $\rho$-contracting, $(L, A)$-quasi-geodesic and
$\beta$ is a continuous $(L, A)$-quasi-geodesic ray such that $d(\alpha_{0},
\beta_{0}) \leq k(\rho, L, A)$. If there are $r, s \in [0, \infty)$ such that $d(\alpha_{r}, \beta_{s}) \leq k(\rho, L,
A)$ then $d_{Haus}(\alpha[0,r],\beta[0,s]) \leq k'(\rho, L, A)$.
If $\alpha[0,\infty)$ and $\beta[0,\infty)$ are asymptotic then their Hausdorff
distance is at most $k'(\rho,L,A)$.
\end{proposition}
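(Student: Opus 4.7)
The plan is to apply the Quasi-geodesic Image Theorem (Theorem \ref{QGIT}) to excursions of $\beta$ away from $\alpha$, with the constants $k(\rho,L,A)$ and $k'(\rho,L,A)$ of Definition \ref{Morse Constant} tuned precisely to absorb the constants appearing on the right-hand side of the QGIT inequality.

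Write $k := k(\rho,L,A)$, $k' := k'(\rho,L,A)$, and $D(t) := d(\beta_t, \alpha)$. The hypotheses give $D(0) \leq k$ and $D(s) \leq d(\beta_s,\alpha_r) \leq k$. By continuity of $\beta$, the open set $\{t \in [0,s] : D(t) > k\}$ is a disjoint union of open intervals $(a_j, b_j)$ with $D(a_j) = D(b_j) = k$. On each such interval the distance hypothesis of Theorem \ref{QGIT} is met by $\beta|_{[a_j,b_j]}$, yielding
$$\mbox{diam}\bigl(\pi_\alpha(\beta_{a_j}) \cup \pi_\alpha(\beta_{b_j})\bigr) \;\leq\; \tfrac{L^2+1}{L}(A + k) + \tfrac{L^2-1}{L}k + 2\rho(k).$$
The definition of $k$ ensures $3L^2 \rho(k) \leq k$ and $k \geq 3A$, while the definition $k' = (L^2+2)(2k+A)$ is arranged to dominate this right-hand side after expansion. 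Using that $\beta$ is an $(L,A)$-quasi-geodesic, the diameter bound on projections converts by a triangle-inequality argument into the pointwise estimate $D(t) \leq k'$ on each $[a_j, b_j]$, hence on all of $[0,s]$. Thus $\beta[0,s] \subseteq N_{k'}(\alpha)$. Applying the same QGIT estimate to $\beta|_{[0,s]}$ as a whole forces the projections $\pi_\alpha(\beta_0), \pi_\alpha(\beta_s)$ to land near $\alpha_0$ and $\alpha_r$ respectively and prevents overshoot, upgrading the conclusion to $\beta[0,s] \subseteq N_{k'}(\alpha[0,r])$.

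For the reverse inclusion $\alpha[0,r] \subseteq N_{k'}(\beta[0,s])$, observe that $\beta[0,s]$ is now a continuous curve trapped in the $k'$-tube around $\alpha[0,r]$ whose endpoints lie near $\alpha_0$ and $\alpha_r$; any $\alpha_u$ avoided by a neighbourhood of $\beta[0,s]$ would force a backtracking detour of $\beta$ that contradicts the QGIT-based projection control established above. The asymptotic statement is then obtained by choosing sequences $r_n, s_n \to \infty$ with $d(\alpha_{r_n}, \beta_{s_n}) \leq k$ (such sequences exist because asymptotic quasi-geodesic rays come arbitrarily close) and applying the finite bound to each pair, then passing to the limit.

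The main obstacle is the conversion performed in the middle paragraph: turning the QGIT diameter-of-projections inequality into an honest pointwise distance bound $D(t) \leq k'$ is where the exact arithmetic of Definition \ref{Morse Constant} is used, and the argument is not formally symmetric in $\alpha$ and $\beta$ because only $\alpha$ is assumed contracting, so the reverse inclusion must be handled via excursions of $\beta$ rather than of $\alpha$.
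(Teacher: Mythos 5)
The paper does not actually prove Proposition \ref{Keyprop1}; it is imported verbatim as Lemma 4.4 of Cashen--Mackay, and your reconstruction follows the same architecture as the original source: decompose $\beta$ into excursions away from $\alpha$, control each excursion with Theorem \ref{QGIT}, and let the arithmetic of $k$ and $k'$ in Definition \ref{Morse Constant} absorb the constants. That skeleton is sound, but two of your steps would not survive being made precise. First, you cannot ``apply the same QGIT estimate to $\beta|_{[0,s]}$ as a whole'': Theorem \ref{QGIT} only applies when $d(\beta,Z)\geq \kappa(\rho,L,A)$, and the full segment violates this at its endpoints (and on every interval between excursions). The overshoot control and the reverse inclusion $\alpha[0,r]\subseteq N_{k'}(\beta[0,s])$ therefore need a separate argument, namely coarse connectivity of the projection: since $d(\beta_t,\alpha)\leq k'$ for all $t\in[0,s]$, for nearby parameters $t,t'$ one has $\mathrm{diam}\bigl(\pi_{\alpha}(\beta_t)\cup\pi_{\alpha}(\beta_{t'})\bigr)\leq 2k'+d(\beta_t,\beta_{t'})$, so as $t$ sweeps $[0,s]$ the projection moves coarsely continuously from near $\alpha_0$ to near $\alpha_r$ and cannot skip any $\alpha_u$ with $u\in[0,r]$, nor drift far past $\alpha_r$ without forcing two parameters $t<t'$ whose images are close while some intermediate image is far, contradicting the quasi-geodesic inequality. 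Asserting that a missed point ``would force a backtracking detour'' is a placeholder for exactly this computation, and it is here (not in the excursion step) that the factor $(L^2+2)$ in $k'$ is actually spent.

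Second, and more seriously, your justification of the asymptotic case is false as stated: asymptotic quasi-geodesic rays need not come arbitrarily close --- two parallel geodesics in the Euclidean plane are asymptotic yet remain at any prescribed distance --- so the existence of $s_n$ with $d(\beta_{s_n},\alpha)\leq k(\rho,L,A)$ is not free. It must be extracted from the contraction of $\alpha$: if some tail $\beta|_{[T,\infty)}$ stayed at distance at least $\kappa(\rho,L,A)$ from $\alpha$, then Theorem \ref{QGIT} applied to $\beta|_{[T,t]}$ would bound $\mathrm{diam}\bigl(\pi_{\alpha}(\beta_T)\cup\pi_{\alpha}(\beta_t)\bigr)$ uniformly in $t$ (the right-hand side involves only $d(\beta_T,\alpha)$ and $d(\beta_t,\alpha)$, both at most the finite Hausdorff distance $C$ between the rays), whence $d(\beta_T,\beta_t)\leq 2C+\mathrm{diam}$ stays bounded, contradicting that $\beta$ is a quasi-geodesic ray. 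This forces $\beta$ to return within $\kappa(\rho,L,A)$ of $\alpha$ at arbitrarily large times, after which your limiting argument along $r_n,s_n\to\infty$ goes through.
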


\begin{lemma}\label{CM 4.6}(Lemma 4.6,  \cite{cashen2017}) Let $\alpha$ be a $\rho$-contracting geodesic ray, and let $\beta$ be a continuous $(L,A)$-quasi-geodesic with $\alpha_{0}=\beta_{0}=o$. Given some $R$ and $J$, suppose there exists a point $x\in \alpha$ with $d(x, o)\geq R$ and $d(x, \beta)\leq J$. Let $y$ be the last point on the subsegment of $\alpha$ between $o$ and $x$ such that $d(y,\beta)\leq \kappa(\rho,L,A)$. Then there exists a constant $M\leq2$ and a function $\lambda(\phi,p,q)$ defined for sublinear function $\phi$, $p\geq1$ and $q\geq 0$ such that $\lambda$ is monotonically increasing in $p$ and $q$ and:
                                         $$d(o,y)\geq R-MJ-\lambda(\rho,L,A)$$
\end{lemma}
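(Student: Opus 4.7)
The strategy is to use closest-point projections to the $\rho$-contracting geodesic $\alpha$ to pin down $y$ close to $x$. One may assume $J > \kappa(\rho, L, A)$; otherwise $d(x, \beta) \le \kappa$ forces $y = x$ and $d(o, y) = d(o, x) \ge R$ trivially. Fix $T \ge 0$ with $d(\beta_T, x) \le J$ and set
\[
s^* \;=\; \sup\{s \in [0, T] : d(\beta_s, \alpha) \le \kappa(\rho, L, A)\}.
\]
Since $\beta_0 = o \in \alpha$, this supremum is nonempty and, by continuity of $s \mapsto d(\beta_s, \alpha)$, is attained. On $(s^*, T]$ the curve $\beta$ lies at distance $> \kappa$ from $\alpha$, so Theorem \ref{QGIT} applied to $\beta|_{[s^*, T]}$ with $Z = \alpha$ bounds $\mathrm{diam}(\pi_\alpha(\beta_{s^*}) \cup \pi_\alpha(\beta_T))$ by some quantity $B$ depending on $\rho, L, A, J$.

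Choose $p^* \in \pi_\alpha(\beta_{s^*})$ and $x_0 \in \pi_\alpha(\beta_T)$. Then $d(p^*, \beta) \le d(p^*, \beta_{s^*}) \le \kappa$, and the triangle inequality gives $d(x_0, x) \le d(x_0, \beta_T) + d(\beta_T, x) \le 2J$; this is precisely where the coefficient $M = 2$ arises. Since $\alpha$ is a geodesic, $|r_{p^*} - r_x| \le d(p^*, x_0) + d(x_0, x) \le B + 2J$, so $r_{p^*} \ge R - 2J - B$. In the main case $p^* \in \alpha[o, x]$, the maximality of $y$ yields $d(o, y) \ge r_{p^*} \ge R - 2J - B$. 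The edge case $r_{p^*} > r_x$ is handled by passing to the last parameter $s \le s^*$ whose projection $\pi_\alpha(\beta_s)$ still meets $\alpha[o, x]$; the $\rho$-contracting hypothesis ensures that projections of nearby $\beta$-points move by only a sublinear amount, so such a parameter furnishes a point of $\alpha[o, x]$ within $\kappa$ of $\beta$ whose $\alpha$-parameter is at least $R - 2J - \lambda$.

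The principal technical obstacle is to isolate the universal constant $M \le 2$: the $2J$ summand comes transparently from $d(x_0, x) \le 2J$, but the QGIT bound $B$ naively carries a linear-in-$J$ contribution with coefficient $(L^2+1)/L$, which exceeds $2$ when $L > 1$. Absorbing this residue into a $J$-independent function $\lambda(\rho, L, A)$ requires applying Theorem \ref{QGIT} in its reversed form, so that the dominant coefficient $(L^2+1)/L$ multiplies $\kappa$ rather than $J$; the remaining $J$-dependence is controlled by sublinearity of $\rho$ together with a preliminary case split on $J$ versus a threshold $J_0(\rho, L, A)$, so that when $J \le J_0$ the residual terms are bounded by a constant depending only on $\rho, L, A$, and when $J > J_0$ they are dominated by $2J$ up to an additive constant. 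Packaging all $J$-independent terms into a single function $\lambda(\rho, L, A)$ monotonic in $L$ and $A$ yields the stated estimate with $M = 2$.
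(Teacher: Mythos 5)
First, a point of reference: the paper does not actually prove this statement --- it is imported verbatim as Lemma 4.6 of \cite{cashen2017} --- so your proposal can only be judged on its own terms. Your outline (locate the last parameter $s^*$ at which $\beta$ is $\kappa$-close to $\alpha$, apply the quasi-geodesic image theorem to the escaping tail $\beta|_{[s^*,T]}$, and transfer the estimate to $\alpha$ through projections of $\beta_{s^*}$ and $\beta_T$) is the natural one, and the reduction to $J>\kappa$, the bound $d(x_0,x)\le 2J$, and the use of maximality of $y$ are all correct.

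The genuine gap is exactly where you flag it, and the repair you sketch does not close it. Whichever way you orient Theorem \ref{QGIT}, the bound on $B=\mathrm{diam}\big(\pi_\alpha(\beta_{s^*})\cup\pi_\alpha(\beta_T)\big)$ retains a $J$-dependent contribution: either $\frac{L^2+1}{L}(A+J)$, or, in the ``reversed'' orientation, $\frac{L^2-1}{L}J+2\rho(J)$. Neither can be packaged into a $J$-independent $\lambda(\rho,L,A)$: the first is linear in $J$ with coefficient at least $2$; the second is still genuinely linear in $J$ for every $L>1$, and even for $L=1$ the term $2\rho(J)$ is unbounded in $J$ (sublinearity and Definition \ref{Morse Constant} only give $\rho(J)\le J/3L^2$ for $J\ge\kappa$, i.e.\ yet another linear-in-$J$ contribution). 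Your fallback --- that for $J>J_0$ the residual terms are ``dominated by $2J$ up to an additive constant'' --- when added to the $2J$ already incurred from $d(x_0,x)\le 2J$, yields $d(o,y)\ge R-4J-\lambda$, i.e.\ $M=4$, not $M\le 2$. As written the argument therefore proves at best $d(o,y)\ge R-\big(2+\tfrac{L^2-1}{L}\big)J-2\rho(J)-\lambda(\rho,L,A)$, which is strictly weaker than the stated inequality; obtaining $M\le 2$ with $\lambda$ independent of $J$ requires a projection estimate sharper than Theorem \ref{QGIT} as quoted, in which the entire $J$-dependence enters only through the single term $d(x_0,x)\le 2J$. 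A secondary, more routine gap: the edge case $r_{p^*}>r_x$ is only asserted; the claim that projections of nearby points of $\beta$ move by a controlled amount needs the contracting inequality applied along a chain of points of $\beta$ at bounded distance from $\alpha$, and should be written out. It is worth noting that every application of this lemma in the present paper takes $J$ itself to be a monotone function of $L$ and $A$, so the weaker inequality your argument actually establishes would suffice for those applications --- but it is not the statement as given.
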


\begin{lemma}(Lemma 4.7, \cite{cashen2017})\label{Lemma 4.7}
Given a sublinear function $\rho$ and constants $L \geq 1$, $A \geq 0$ there
exist constants $L' \geq 1$ and $A'\geq 0$ such that if $\alpha$ is a $\rho$-contracting geodesic ray
or segment and $\beta$ is a continuous $(L,A)$-quasi-geodesic ray not asymptotic to $\alpha$
with $\alpha_0 = \beta_0 = o$, then we obtain a continuous $(L',A')$-quasi-geodesic by following $\alpha$
 backward until $\alpha_{s_0}$, then following a geodesic from $\beta$ to $\beta_{t_0}$, then following $\beta$,
where $\beta_{t_0}$ is the last point of $\beta$ at distance $\kappa(\rho,L,A)$ from $\alpha$, and where $\alpha_{s_0}$ is the
last point of $\alpha$ at distance $\kappa(\rho,L,A)$ from $\beta_{t_0}$ .
\end{lemma}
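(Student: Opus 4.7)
The plan is to set $\kappa := \kappa(\rho,L,A)$ and verify that the concatenated path obtained by traversing $\alpha$ in reverse from some $\alpha_T$ down to $\alpha_{s_0}$, then a geodesic from $\alpha_{s_0}$ to $\beta_{t_0}$, then $\beta$ forward from $\beta_{t_0}$, is a continuous $(L',A')$-quasi-geodesic with $L',A'$ depending only on $\rho,L,A$. The first move is to secure well-definedness of $s_0$ and $t_0$: since $\beta$ is not asymptotic to $\alpha$, Proposition \ref{Keyprop1} implies that the set $\{t\geq 0 : d(\beta_t,\alpha)\leq\kappa\}$ cannot be unbounded, for otherwise iterating the Hausdorff control it provides would force $\beta$ to track $\alpha$ to infinity. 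Its supremum is realised by continuity and defines $t_0$; then the supremum of $\{s\geq 0 : d(\alpha_s,\beta_{t_0})\leq\kappa\}$ is finite (since $\alpha$ escapes every bounded set) and defines $s_0$. By construction, the middle segment $[\alpha_{s_0},\beta_{t_0}]$ has length at most $\kappa$, so the concatenation $\gamma$ is continuous.

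The upper bound $d(\gamma_u,\gamma_v)\leq L'|u-v|+A'$ is obtained by splitting the parameter interval between $u$ and $v$ into its intersections with the three pieces and applying the Lipschitz constant of each, absorbing the bounded middle segment into the additive constant $A'$. For the matching lower bound $d(\gamma_u,\gamma_v)\geq \frac{1}{L'}|u-v|-A'$, the case in which $\gamma_u$ and $\gamma_v$ both lie on a single piece follows from the geodesic or $(L,A)$-quasi-geodesic constants of that piece, and the case in which one of the points lies on the middle segment is handled by the triangle inequality together with the length bound $\kappa$.

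The substantive case is $\gamma_u=\alpha_s$ with $s\leq s_0$ and $\gamma_v=\beta_t$ with $t\geq t_0$; the target is $d(\alpha_s,\beta_t)\gtrsim (s_0-s)+(t-t_0)$, which after rescaling by $L$ on the $\beta$ side yields the required inequality. I would obtain this in two moves. First, since $\beta|_{[t_0,t]}$ is $\kappa$-separated from $\alpha$ by the choice of $t_0$, Theorem \ref{QGIT} gives a controlled bound on $\mathrm{diam}(\pi_\alpha(\beta_{t_0})\cup\pi_\alpha(\beta_t))$; combined with the fact that $\pi_\alpha(\beta_{t_0})$ is within $\kappa$ of $\alpha_{s_0}$, this places $\pi_\alpha(\beta_t)$ near $\alpha_{s_0}$ up to a $\rho$-sublinear error in $d(\beta_t,\alpha)$. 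Second, apply Lemma \ref{CM 4.6} to a shortest path from $\alpha_s$ to $\beta_t$ viewed against $\alpha$: the last close point on $\alpha$ to this path lies near $\alpha_{s_0}$ by the projection estimate, and Lemma \ref{CM 4.6} then forces the shortest path to have length at least $(s_0-s)+d(\beta_{t_0},\beta_t)$ minus controlled constants, which is the desired bound.

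The main obstacle is precisely this mixed case, where the $\rho$-contracting property of $\alpha$ and the quasi-geodesic constants of $\beta$ must be balanced against each other. The design of $\kappa(\rho,L,A)$ in Definition \ref{Morse Constant} is tuned so that the contracting estimates dominate any $(L,A)$-quasi-geodesic slack in the relevant range; this is what ultimately forces $L'$ and $A'$ to depend only on $\rho,L,A$ and not on the particular $\alpha$ or $\beta$, and one expects the final values to be expressible in terms of $k(\rho,L,A)$ and $k'(\rho,L,A)$.
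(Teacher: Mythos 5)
The paper offers no proof of this statement: it is imported verbatim as Lemma 4.7 of \cite{cashen2017}, so there is nothing internal to compare your argument against, and I can only judge it as a reconstruction of Cashen--Mackay's proof. The skeleton is right. Non-asymptoticity together with Proposition \ref{Keyprop1} does bound the set $\{t:d(\beta_t,\alpha)\leq\kappa(\rho,L,A)\}$ (an unbounded set of return times would put longer and longer initial segments of $\beta$ at Hausdorff distance $k'(\rho,L,A)$ from initial segments of $\alpha$, forcing asymptoticity in the ray case; the segment case is trivial), the upper quasi-geodesic bound is routine, and the only substantive point is the lower bound when $\gamma_u=\alpha_s$ with $s\leq s_0$ and $\gamma_v=\beta_t$ with $t\geq t_0$.

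Two caveats on how you handle that case. First, your application of Theorem \ref{QGIT} to $\beta|_{[t_0,t]}$ is legitimate, but the resulting control on $\mathrm{diam}(\pi_\alpha(\beta_{t_0})\cup\pi_\alpha(\beta_t))$ is \emph{linear} in $d(\beta_t,\alpha)$ with slope $\frac{L^2+1}{L}$, not sublinear as you assert; this is harmless only because $d(\beta_t,\alpha)\leq d(\alpha_s,\beta_t)$, so the linear term can be moved to the left-hand side and absorbed into $L'$. Second, the invocation of Lemma \ref{CM 4.6} does not fit: that lemma requires the two paths to issue from a common basepoint and bounds the position of the last point of $\alpha$ close to the other path, not the length of a geodesic from $\alpha_s$ to $\beta_t$, so your ``second move'' does not go through as stated. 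It is also unnecessary. From the projection estimate one gets
$d(\alpha_s,\beta_t)\geq d(\alpha_s,\pi_\alpha(\beta_t))-d(\beta_t,\alpha)\geq (s_0-s)-2\kappa-D-d(\alpha_s,\beta_t)$ with $D\leq C_1d(\alpha_s,\beta_t)+C_2$, hence $d(\alpha_s,\beta_t)\succeq s_0-s$; and
$\frac{t-t_0}{L}-A\leq d(\beta_{t_0},\beta_t)\leq \kappa+D+d(\beta_t,\alpha)$ gives $d(\alpha_s,\beta_t)\succeq t-t_0$. Averaging the two inequalities produces the required lower bound with $L',A'$ depending only on $\rho$, $L$, $A$, which is the conclusion you wanted. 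With that substitution your outline becomes a correct proof.
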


\section{Geometry of cusped space}
Let $G$ be a finitely generated group and $\mathcal{H}=\{H_1,H_2,...,H_n\}$ be a finite collection of finitely generated subgroups. Let $S$ be a finite generating set of $G$ such that for each $i$, $S_i:=S\cap H_i$ generates $H_i$.
Let $(G^{h},d)$ be the cusped space defined as in Definition \ref{cuspedspace}. Consider  the horoball $gH_i^h$ attached along the left coset $gH_i$ for $H_i\in \mathcal{H}$. Let $gh\in gH_i$ be a point and $e_i$ be the edge joining $(gh,i)$ and $(gh,i+1)$ for $i\geq0$. Define $\gamma:=\bigcup_{i\geq0}e_i$ . It is easy to check that $\gamma$ is a geodesic in $G^{h}$. We call $\gamma$ to be a \textit{vertical ray}.\par
We assume a vertical ray in each $H_i^h$ is contracting. Then we will prove that the limit set $\Lambda_c(gH_i^h)$ is a singleton set.

 \begin{lemma} \label{horoball contracting} Let $(G^{h},d)$ be a cusped space such that a vertical ray in each $H_i^h$ is contracting in $G^h$, then $H_i^h$ is Morse hence contracting in $G^h$. Conversely, if $H_i^h$ is contracting in $G^h$ then a vertical ray in each $H_i^h$ is contracting in $G^h$
     \end{lemma}
     \begin{proof}Given $k\geq1$ and $\epsilon\geq0$, let $\gamma$ be any $(k,\epsilon)$ quasi-geodesic lying outside $H_i^h$ with end point on $H_i\subseteq H^h$. It suffices to show that $\gamma$ lies in a bounded neighborhood of $H_i^h$ where the bound depends on $k$ and $\epsilon$ only.
 Since $H_i$ acts on $H_i^h$ by isometry, every vertical ray starting from some point of $H_i$ is $\rho_i$-contracting geodesic ray for some sublinear function $\rho_i$. Let $x$ and $y$ be end points of $\gamma$. For some number $n$, $2^{n-1}\leq d_{H_i}(x,y)\leq 2^n$. Consider points $(x,n),(y,n)\in H_i^h$. Since the path $\gamma$ lies outside $H_i^h$, point $x$ is a nearest point projection of $(x,n)$ onto $\gamma$. Let $\alpha$ be the vertical line from $(x,n)$ upto $x$ and $\beta$ be the vertical ray starting from $y$. Also let $e$ be an edge joining $(y,n)$ to $(x,n)$ then the concatenated path $\gamma':=e*\alpha* \gamma$ is a $(2k+1,\epsilon+1)$ quasi-geodesic joining points $(y,n)$ and $y$. Since $\beta$ is $\rho_i$-contracting, $\gamma'$ lies in $\kappa'(\rho_i,2k+1,\epsilon+1)$ neighborhood of $\beta$. Therefore $\gamma$ lies in $\kappa'(\rho_i,2k+1,\epsilon+1)$ neighborhood of $H_i^h$.\par
 Conversely, suppose $\gamma$ is a quasi-geodesic joining two points of a vertical geodesic ray $\alpha$ in $H_i^h$.
 Then, as $H_i^h$ is contracting, the portions of $\gamma$
  outside $H_i^h$ stay in a bounded neighborhood of $H_i^h$
  (the bound depends only up on the defining constants of the quasi-geodesic $\gamma$ and the contracting function of $H_i^h$). Taking the projection of $\gamma$ onto $H_i^h$, gives rise to a quasi-geodesic $\beta$ in $H_i^h$ such that $\gamma$ and $\beta$ lie in bounded neighborhood of each other. As $H_i^h$ is a hyperbolic metric space, $\beta$ lie in a bounded neighborhood of $\alpha$. Hence, $\gamma$ lie in a bounded neighborhood of $\alpha$.
     \end{proof}
  \noindent{\textbf{Note :}}  Henceforth, in this section, we consider the space $(G^{h},d)$ with the assumption that for each $i$, $H_i$ is of infinite index in $G$ and $H_i^{h}$ is a contracting subset of $G^{h}$. As $G$ acts on $G^h$ by isometry,  so $gH_i^h$ is contracting for all $g\in G$.

     \begin{lemma}\label{proper}
     For each $i$, $gH_i^{h}$ is uniformly properly embedded in $G^h$ i.e. for all $M>0$, there exists $N=N(M)$ such that if $x,y\in gH_i^h$ with $d(x,y)\leq M$ implies $d_{gH_i^h}(x,y)\leq N$.
     \end{lemma}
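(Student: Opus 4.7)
My plan is to combine the isometric action of the conjugate subgroup $gH_ig^{-1}$ on $gH_i^h$ with properness of $G^h$, exploiting the fact that any exit from a combinatorial horoball must traverse its depth-$0$ boundary. The subgroup $gH_ig^{-1} \leq G$ acts on $G^h$ by isometries, preserves $gH_i^h$ setwise, and acts freely transitively on each level $gH_i \times \{k\}$: the element $gh_2 h_1^{-1} g^{-1}$ sends $(gh_1,k)$ to $(gh_2,k)$. Both the metric $d$ and the induced path metric $d_{gH_i^h}$ are invariant under this action, so given $x,y \in gH_i^h$ with $d(x,y) \leq M$ I may translate and assume $y = (g,k)$ for some $k \geq 0$ without altering either distance.

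Now I would split on the depth $k$. In the deep case $k \geq M$, I observe that every edge of $G^h$ with one endpoint inside $gH_i^h$ and the other outside $gH_i^h$ must be a Cayley-graph edge between two depth-$0$ vertices, because horoball edges of types (2) and (3) are entirely internal to a single horoball. Moreover every edge of $G^h$ changes the depth of a vertex of $gH_i^h$ by at most one, so any path starting at $(g,k)$ that eventually leaves $gH_i^h$ has length at least $k+1 > M$. Consequently a $G^h$-geodesic from $y$ to $x$ of length $\leq M$ must stay inside $gH_i^h$, giving $d_{gH_i^h}(x,y) \leq d(x,y) \leq M$. In the shallow case $k < M$, the vertex $y$ lies in the finite set $\{(g,0),\ldots,(g,M-1)\}$; by properness of $G^h$ each intersection $B_M(y)\cap gH_i^h$ is finite, every point of it sits at some finite $d_{gH_i^h}$-distance from $y$ (the graph $gH_i^h$ being connected), and the maximum over this finite collection of pairs is a constant $N_1(M) < \infty$.

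Setting $N(M) := \max\{M, N_1(M)\}$ then yields the required bound, and by the reduction above it is independent of the initial choice of $(x,y) \in gH_i^h$. The only step requiring any care is the edge-classification argument in the deep case, which follows directly from the three edge types in the definition of a combinatorial horoball; I do not anticipate a serious obstacle.
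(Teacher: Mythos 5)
Your argument is correct, but it follows a genuinely different route from the paper's. The paper fixes a $G^h$-geodesic $\alpha$ from $x$ to $y$, partitions it into subsegments lying alternately in the Cayley graph of $G$ and in the various horoballs it penetrates, uses the exponential (type (2)) edges to bound the $\Gamma$-distance spanned by a horoball subsegment of length at most $M$ by $N'2^M$, thereby controls $d_G$ between the points where $\alpha$ last exits and re-enters $gH^h$, and finally invokes the fact that the finitely generated subgroup $H$ is uniformly properly embedded in $G$ to convert that into a bound on $d_{gH}$. You instead combine homogeneity with a depth argument: after translating by the stabilizer $gH_ig^{-1}$ so that $y=(g,k)$, you note that the only edges leaving $gH_i^h$ emanate from depth-$0$ vertices while every edge changes depth by at most one, so for $k\geq M$ a geodesic of length at most $M$ cannot escape the horoball and $d_{gH_i^h}(x,y)\leq M$; for $k<M$ you reduce to finitely many pairs using properness and connectivity of the horoball. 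Both proofs are sound; yours is more elementary in that it avoids the geodesic decomposition and does not need the separate observation that $gH$ is properly embedded in $G$ (local finiteness of $G^h$ does that work), while the paper's argument is closer to an explicit distortion estimate. Two small points you should make explicit: the constant from your shallow case a priori depends on $g$, so you need one further application of the full $G$-action (identifying $B_M((g,k))$ with $B_M((e,k))$ and $gH_i^h$ with $H_i^h$) to obtain the uniformity in $g$ that Lemma \ref{qi lemma} later uses; and $x,y$ may be interior points of edges rather than vertices, which costs only an additive constant.
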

     \begin{proof} Let $H_i:=H$ and $\alpha:[0,l]\rightarrow G^h$ be a geodesic with $\alpha(0)=x$, $\alpha(l)=y$. Then $l_{G^h}(\alpha)\leq M$. $\alpha$ either lives in $G$ or in attached horoballs. Let $g_0H^h$, $g_1H^h$, ..., $g_nH^h$ be horoballs penetrated by $\alpha$ with $g_0H^h$=$gH^{h}$=$g_nH^h$. Partition $[0,l]$ by points $0=s_0\leq t_0< s_1<t_1<...<s_n\leq t_n=l$ such that $\alpha(t_0),\alpha(s_n)\in gH^h$, $\alpha_{[s_j,t_j]}$ is a geodesic in $g_jH^h$ and $\alpha_{[t_i,s_{i+1}]}$ is a geodesic in $G$ for $0\leq j\leq n$ and $0\leq i\leq n\mbox{-1}$.\par
    $\sum_{j=0}^{n}d_{g_jH^h}(\alpha_{s_j},\alpha_{t_j})$, $\sum_{i=0}^{n-1}d_{G}(\alpha_{t_i},\alpha_{s_{i+1}})\leq M$. There exists a number $N'=N'(M)$ such that $\sum_{j=0}^{n}d_{g_jH}(\alpha_{s_j},\alpha_{t_j})\leq N'2^M$. $d_{G}(\alpha_{t_0},\alpha_{s_n})$ $\leq$ $\sum_{j=1}^{n-1}d_{g_jH}(\alpha_{s_j},\alpha_{t_j})$+$\sum_{i=0}^{n-1}d_{G}(\alpha_{t_i},\alpha_{s_{i+1}})$ $\leq N'2^M+M$. Let $N_1:=N'2^M+M$. Then $d_{G}(\alpha_{t_0},\alpha_{s_n})\leq N_1$. As $H$ is finitely generated, it is easy to check that $gH$ are uniformly properly embedded in $G$. That gives a number $N_2=N_2(M)$ such that $d_{gH}(\alpha_{t_0},\alpha_{s_n})\leq N_2$. Hence,
    \begin{align*}
    d_{gH^h}(x,y)&\leq d_{gH^h}(\alpha_{s_0},\alpha_{t_0})+ d_{gH^h}(\alpha_{t_0},\alpha_{s_n})+d_{gH^h}(\alpha_{s_n},\alpha_{t_n})\\
                 &\leq M+ d_{gH}(\alpha_{t_0},\alpha_{s_n})+M\\
                 &\leq 2M+N_2.
    \end{align*}
    Take $N=2M+N_2$. This $N$ depends upon $M$ only.
    \end{proof}
    \begin{lemma}\label{qi lemma}
    For each $i$, inclusion $gH_i^h\hookrightarrow G^h$ is a quasi-isometric embedding.
    \end{lemma}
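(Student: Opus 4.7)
The claim is that inclusion $\iota : gH_i^h \hookrightarrow G^h$ is a quasi-isometric embedding. One inequality is free: since paths in $gH_i^h$ are also paths in $G^h$, the inclusion is $1$-Lipschitz, giving $d_{G^h}(x,y) \leq d_{gH_i^h}(x,y)$ for all $x,y \in gH_i^h$. So the entire task is to establish a linear lower bound of the form $d_{gH_i^h}(x,y) \leq K \, d_{G^h}(x,y) + \epsilon$.

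My plan is to combine the Morse property (which we now have on $gH_i^h$ from the previous lemma together with Theorem \ref{Cont}) with the uniform proper embedding of Lemma \ref{proper}. First I would fix $x,y \in gH_i^h$ and let $\alpha : [0,L] \to G^h$ be a geodesic from $x$ to $y$ in $G^h$, where $L = d_{G^h}(x,y)$. Since $\alpha$ is a $(1,0)$-quasi-geodesic with endpoints on the Morse subset $gH_i^h$, the Morse gauge $N$ of $gH_i^h$ yields a constant $C := N(1,0)$ such that $\alpha$ lies in the $C$-neighborhood of $gH_i^h$; crucially $C$ depends only on $H_i$, not on $g$, $x$, or $y$ (because $G$ acts by isometries on $G^h$, so the Morse gauges of all cosets of $H_i$ are the same).

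Next I would discretize $\alpha$: sample points $p_k := \alpha(k)$ for $k = 0,1,\dots,\lfloor L\rfloor$, and set $p_{\lceil L \rceil} := y$. For each $p_k$ choose $q_k \in gH_i^h$ with $d_{G^h}(p_k,q_k) \leq C$, taking $q_0 = x$ and $q_{\lceil L \rceil} = y$. Then by the triangle inequality, $d_{G^h}(q_k,q_{k+1}) \leq 2C+1$ for every $k$. Applying Lemma \ref{proper} with $M = 2C+1$ yields a constant $N' = N'(2C+1)$, independent of $x,y,g$, such that $d_{gH_i^h}(q_k,q_{k+1}) \leq N'$. Summing gives
\[
d_{gH_i^h}(x,y) \leq \sum_{k=0}^{\lceil L\rceil -1} d_{gH_i^h}(q_k,q_{k+1}) \leq N'(\lceil L \rceil) \leq N' \cdot d_{G^h}(x,y) + N',
\]
which is the desired lower bound.

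No single step looks like a serious obstacle: the real content is supplied by the Morse/contracting property of $gH_i^h$ (proved in the preceding lemma and translated via Theorem \ref{Cont}) and by the uniform properness in Lemma \ref{proper}. The only point that deserves a line of care is noting that the Morse constant and the proper-embedding constant are the same for every coset $gH_i$, which follows from the $G$-action by isometries on $G^h$ carrying $H_i^h$ to $gH_i^h$.
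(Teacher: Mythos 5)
Your proposal is correct and follows essentially the same route as the paper: both bound the geodesic $\alpha$ in a uniform neighborhood of $gH_i^h$ (you via the Morse gauge $N(1,0)$, the paper via the Geodesic Image Theorem applied to the contracting set — equivalent by Theorem \ref{Cont}), then discretize $\alpha$ into unit steps, project to nearby points of $gH_i^h$, and sum using Lemma \ref{proper}. Your write-up is in fact more explicit about the summation and about the $G$-equivariance making the constants coset-independent, but there is no substantive difference.
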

\begin{proof}
Let $x,y\in gH_i^h$. Clearly $d(x,y)\leq d_{gH_i^h}(x,y)$. Let $\alpha$ be a geodesic joining $x$ and $y$ in $G^h$. As $gH_i^h$ is contracting, by Theorem \ref{GIT}, there exists a sublinear function $\sigma$ and a constant $\kappa_{\sigma}$, such that $\alpha$ lies in $\kappa_{\sigma}+\frac{\sigma(\kappa_{\sigma})}{2}$ neighborhood of $gH_i^h$. Let $\kappa_{\sigma}':=\kappa_{\sigma}+\frac{\sigma(\kappa_{\sigma})}{2}$. Let $\alpha_s,\alpha_t\in \alpha$ such that $d(\alpha_s,\alpha_t)\leq 1$. Then there exist points $p_s,p_t\in gH_i^h\ $ such that $d(\alpha_s,p_s)\leq \kappa_{\sigma}'$ and  $d(\alpha_t,p_t)\leq \kappa_{\sigma}'$. By triangle inequality,
$d(p_s,p_t)\leq 2\kappa_{\sigma}'+1$.
Let $M=2\kappa_{\sigma}'+1$ and $C$ be the value of $N$ for $M$ in the Lemma \ref{proper}.
Then $d_{gH_i^h}(x,y)\leq C d(x,y)+C$. Therefore, the inclusion is $(C,C)$ quasi-isometric embedding.
\end{proof}

\begin{proposition} $\Lambda_c(gH_i^h)$ is a singleton set in $\partial_cG^h$.
\end{proposition}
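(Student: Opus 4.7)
The plan is to identify $\Lambda_c(gH_i^h)$ as the singleton $\{\zeta\}$, where $\zeta\in\partial_cG^h$ is the common endpoint of the vertical rays of $gH_i^h$, and then to rule out any other accumulation point.

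First I would verify that $\zeta$ is well-defined. In the combinatorial horoball, $(v,k)$ and $(w,k)$ are joined by a single edge whenever $k\geq\lceil\log_2 d_{gH_i}(v,w)\rceil$, so two vertical rays from $(v,0)$ and $(w,0)$ have bounded horoball-Hausdorff distance; Lemma~\ref{qi lemma} lifts this to a $G^h$-Hausdorff bound. Every vertical ray in every $gH_i^h$ is contracting in $G^h$, since $G$ acts by isometries and a vertical ray in $H_i^h$ is $\rho_i$-contracting by hypothesis, and Lemma~\ref{lemma1} identifies them all as a single contracting class $\zeta$. Taking $x_n=\gamma(n)$ along a vertical ray $\gamma$ yields $\zeta\in\Lambda_c(gH_i^h)$.

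For the reverse inclusion, let $\eta\in\Lambda_c(gH_i^h)$ be witnessed by $x_n\in gH_i^h$ with $x_n\to\eta$ in $\overline{G^h}$, and fix $o^*\in\pi_{gH_i^h}(o)$. Then $d(o,x_n)\to\infty$, and by Lemma~\ref{qi lemma}, also $d_{gH_i^h}(o^*,x_n)\to\infty$. Since $gH_i^h$ is proper, Gromov hyperbolic, and has a \emph{unique} Gromov boundary point (its apex), this is the only possible intrinsic limit of $x_n$, so the intrinsic Gromov product $(x_n\mid\mathrm{apex})_{o^*}^{gH_i^h}\to\infty$. Consequently the intrinsic horoball-geodesic $[o^*,x_n]$ fellow-travels the vertical ray from $o^*$ on initial segments of length $\to\infty$. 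To promote this to $G^h$, let $\sigma_n$ be a $G^h$-geodesic from $o$ to $x_n$ and let $\alpha^\zeta$ be a geodesic ray from $o$ representing $\zeta$. The Geodesic Image Theorem~\ref{GIT}, applied to the contracting subset $gH_i^h$ with $x_n\in gH_i^h$ one endpoint of $\sigma_n$, forces $\sigma_n$ first to come within a uniform neighborhood of $o^*$ and then to stay close to $gH_i^h$; likewise $\alpha^\zeta$, past a bounded initial segment, tracks a vertical ray from near $o^*$. By the QI-embedding, the tail of $\sigma_n$ is at bounded $G^h$-Hausdorff distance from the intrinsic horoball-geodesic $[o^*,x_n]$, so the intrinsic fellow-traveling above feeds into Proposition~\ref{Keyprop1} applied to $\alpha^\zeta$ and $\sigma_n$, yielding $d_{\mathrm{Haus}}(\sigma_n|_{[0,T_n]},\alpha^\zeta|_{[0,T_n']})\leq k'(\rho_\zeta,1,0)$ for some $T_n,T_n'\to\infty$. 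An Arzel\`a--Ascoli limit of $\sigma_n$ then gives a geodesic ray from $o$ representing $\eta$ at bounded Hausdorff distance from $\alpha^\zeta$, so $\eta=\zeta$.

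The delicate step is this final transfer: matching the intrinsic horoball-geodesic with the $G^h$-geodesic $\sigma_n$ from its first entry into a neighborhood of $gH_i^h$ onwards, and invoking Proposition~\ref{Keyprop1} on the correct subsegments so that intrinsic fellow-traveling is inherited in $G^h$. The three ingredients — contracting behavior of $gH_i^h$ (Theorem~\ref{GIT}), the QI embedding (Lemma~\ref{qi lemma}), and the single-apex property of combinatorial horoballs — are all available, but their gluing requires care in choosing subsegments and basepoints consistently.
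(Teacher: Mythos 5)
Your argument is correct, but it is doing considerably more work than the paper does: the paper's entire proof is a citation of Corollary 6.2 of \cite{cashen2017}, which says precisely that a quasi-isometrically embedded, contracting (Morse), proper hyperbolic subspace has limit set in $\partial_c X$ homeomorphic to its Gromov boundary; combined with Lemma \ref{qi lemma}, the contracting property of $gH_i^h$ established just before, and the fact that a combinatorial horoball has one-point Gromov boundary, the proposition is immediate. What you have written is, in effect, a direct proof of the relevant special case of that corollary: the forward inclusion via Hausdorff-equivalence of vertical rays, and the reverse inclusion by pushing the one-point Gromov boundary of the horoball through the quasi-isometric embedding and the Morse property into $G^h$. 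The ``delicate step'' you flag -- transferring the intrinsic fellow-travelling of $[o^*,x_n]$ with the vertical ray to the ambient geodesic $\sigma_n$ -- is exactly what the cited corollary packages; to close it cleanly you would also want to record that an intrinsic horoball geodesic is itself a Morse quasi-geodesic of $G^h$ (Morse subset, quasi-isometrically embedded, intrinsically Morse because the horoball is hyperbolic), after which Proposition \ref{Keyprop1} applies as you indicate. So the proposal is sound and self-contained where the paper outsources; the trade-off is length and a few standard-but-unstated composition facts versus a one-line appeal to \cite{cashen2017}.
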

\begin{proof}
The combinatorial horoball $H^h_i$ is a proper hyperbolic metric space whose Gromov boundary is a singleton set. As $H^h_i$ is quasi-isometrically embedded and contracting in $G^h$, by Corollary 6.2 of \cite{cashen2017}, $\Lambda_c(gH_i^h)$ is a singleton set in $\partial_cG^h$ for each $gH_i^h$.
\end{proof}
\begin{remark}\label{rem} Note that no two distinct combinatorial horoballs $g_iH^h_i$ and $g_jH^h_j$ have the same limit point in $\partial_cG^h$.
\end{remark}
 A finite collection of subgroups $\{H_1,...,H_n\}$ is said to be \textit{almost malnormal} if $\abs{gH_ig^{-1}\cap H_i}< \infty$ for
 $g\notin H_i$ and $\abs{gH_ig^{-1}\cap H_j}<\infty$
 for all $g\in G$ and $i\neq j$.

\begin{corollary}\label{malnormal} The finite collection
$\{H_1,...,H_n\}$ of subgroups is almost malnormal in $G$.
\end{corollary}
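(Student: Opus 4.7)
The plan is to argue by contradiction. Suppose $\{H_1,\dots,H_n\}$ fails to be almost malnormal in $G$: then there exist indices $i,j$ and an element $g\in G$, with $g\notin H_i$ in the case $i=j$, such that $K:=H_i\cap gH_jg^{-1}$ is an infinite subgroup of $G$. In either case $H_i$ and $gH_j$ are distinct left cosets (of the same or of different peripheral subgroups), so $H_i^h$ and $gH_j^h$ are distinct combinatorial horoballs in $G^h$. By the preceding proposition their limit sets in $\partial_cG^h$ are singletons $\Lambda_c(H_i^h)=\{\xi\}$ and $\Lambda_c(gH_j^h)=\{\eta\}$, and by Remark \ref{rem} we have $\xi\neq\eta$.

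The strategy is to produce a sequence in $K$ that converges in the bordification to both $\xi$ and $\eta$. Choose distinct $k_n\in K$. Since balls in the Cayley graph of $G$ are finite, $d_G(1,k_n)\to\infty$, and then by Lemma \ref{qi lemma} (combined with the fact that the level-zero inclusion into a horoball distorts distances only logarithmically) we also have $d(o,k_n)\to\infty$ in $G^h$. Writing $k_n=gh_ng^{-1}$ with $h_n\in H_j$, the translate $k_ng=gh_n$ lies in $gH_j\subset gH_j^h$. Using compactness of the bordification $\overline{G^h}$, which follows from our standing assumption that $\partial_cG^h$ is compact, I extract a subsequence along which both $\{k_n\}$ and $\{k_ng\}$ converge in $\overline{G^h}$. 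The first sequence is unbounded in $H_i^h$, so its limit lies in $\Lambda_c(H_i^h)=\{\xi\}$; symmetrically the second converges to $\eta$.

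To close the contradiction I note that $d(k_n,k_ng)=d(1,g)$ is bounded independently of $n$, so the two limits must coincide, forcing $\xi=\eta$, contradicting Remark \ref{rem}. The one non-formal ingredient is the implicit bounded-perturbation statement for the Cashen--Mackay topology: if $x_n\to\zeta$ in $\overline{G^h}$ and $d(x_n,y_n)\leq C$, then $y_n\to\zeta$ as well. Unpacking Definition \ref{bordif}, for each $r$ eventually every continuous $(L,A)$-quasi-geodesic from $o$ to $x_n$ passes within $\kappa(\rho_\zeta,L,A)$ of $\alpha^\zeta\cap N^c_ro$; concatenating such a quasi-geodesic with a geodesic of length at most $C$ from $x_n$ to $y_n$ yields a continuous $(L',A')$-quasi-geodesic from $o$ to $y_n$ whose initial portion still fellow-travels $\alpha^\zeta$ out to a slightly smaller radius $r'(r,C)$, so $y_n\in\bar U(\zeta,r')$. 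I expect this bounded-perturbation verification to be the main technical step; once it is in hand, the argument above immediately delivers the desired contradiction and hence the almost malnormality of $\mathcal{H}$.
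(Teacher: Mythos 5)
Your overall strategy is the same as the paper's: assume almost malnormality fails, extract an infinite sequence $h_n=gh_n'g^{-1}$ realizing the failure, note that $\{h_n\}\subset H_i$ and $\{gh_n'\}\subset gH_j$ converge to the (distinct, by Remark \ref{rem}) singleton limit points of the two horoballs, and derive a contradiction from the fact that $d(h_n,gh_n')\leq |g|$ is uniformly bounded. The paper, like you, isolates the bounded-perturbation statement (``if $x_n\to\zeta$ and $d(x_n,y_n)$ is bounded then $y_n\to\zeta$'') as the crux. (A minor remark: you invoke sequential compactness of $\overline{G^h}$, i.e.\ Proposition \ref{compactf}, which appears later in the paper; this is not needed, since the unbounded sequences already lie in horoballs whose limit sets are singletons.)

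The genuine gap is in your sketch of the bounded-perturbation step, and it is exactly where the work lies. Membership $y_n\in \bar U(\zeta,r')$ is a \emph{universal} statement: every continuous $(L,A)$-quasi-geodesic from $o$ to $y_n$, for every $L,A$, must come $\kappa(\rho_{\zeta},L,A)$-close to $\alpha^{\zeta}\cap N^c_{r'}o$. You concatenate a quasi-geodesic from $o$ to $x_n$ with a short geodesic to $y_n$; this only exhibits \emph{some} quasi-geodesics from $o$ to $y_n$ with the fellow-travelling property, which does not verify the definition. The concatenation must go the other way: take an \emph{arbitrary} continuous $(L,A)$-quasi-geodesic $\gamma_n$ from $o$ to $y_n=gh_n'$, append a geodesic of length at most $K=|g|$ to reach $x_n=h_n$, and apply the hypothesis $x_n\to\zeta$ to the resulting $(L,A+K)$-quasi-geodesic. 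Even then one only obtains that $\gamma_n$ comes $\bigl(\kappa(\rho_{\zeta},L,A+K)+K\bigr)$-close to $\alpha^{\zeta}([r,\infty))$, which is strictly weaker than the required closeness constant $\kappa(\rho_{\zeta},L,A)$. The paper repairs this with Lemma \ref{CM 4.6}: restricting to $L^2,A\leq r/3$, it shows $\gamma_n$ comes $\kappa(\rho_{\zeta},L,A)$-close to $\alpha^{\zeta}$ outside a ball of radius $r-u(r)$ for an explicit function $u$, and then runs the argument at radius $r+u(r)$ to land at radius $r$. Your phrase ``still fellow-travels out to a slightly smaller radius'' conceals precisely this quantitative conversion. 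With the direction of the concatenation reversed and Lemma \ref{CM 4.6} invoked, your argument becomes the paper's.
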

\begin{proof}
Suppose $\{H_1,...,H_n\}$ is not almost malnormal in $G$ then there exists an element $g\in G$ and two sequence of distinct elements $\{h_n\}$ and $\{h_n'\}$ in $H_i,H_j$ respectively such that $gh_n'g^{-1}=h_n$ for every $n$ and if $i=j$ then $g\notin H_i$.  Let $\Lambda_c(H_i^h)=\zeta_1$ and $\Lambda_c(gH_j^h)=\zeta_2$. By Remark \ref{rem}, $\zeta_1\neq\zeta_2$. After passing to a subsequence if necessary, we can assume that $\{h_n\}$ and $\{gh_n'\}$ converges to $\zeta_1$ and $\zeta_2$ respectively. Next we claim that the sequence $\{gh_n'\}$ converges to $\zeta_1$, which then gives $\zeta_1=\zeta_2$, a contradiction.\par
\underline{Claim:} $\{gh_n'\}$ converges to $\zeta_1$.\par
The proof is based on the idea that if a sequence $\{x_n\}$ converges to some point of the boundary and there is another sequence $\{y_n\}$ such that $\{d(x_n,y_n)\}$ is bounded then $y_n$ will converge to the same point of the boundary.\par
Let $\gamma_n$ be an arbitrary continuous $(L,A)$-quasi-geodesic joining $e$ to $gh_n'$. Also let $\delta_n$ be a geodesic joining $gh_n'$ and $h_n$. For given $\gamma_n$, define $\beta_n:=\gamma_n*\delta_n$. Let $\abs{g}=K$, then $\beta_n$ is continuous $(L,A+K)$-quasi-geodesic joining $e$ to $h_n$.\par
As $h_n\rightarrow \zeta_1$, given $r\geq1$, there exists number $N=N(r)$, such that for all $n\geq N$;
  \begin{equation} d\big(\beta_n,\alpha^{\zeta_1}\big([r,\infty)\big)\big) \leq \kappa(\rho_{\zeta_1},L,A+K)\end{equation}
  Here $\alpha^{\zeta_1}$ is a geodesic ray starting from $e$ and representing $\zeta_1$.\par
  From previous inequality, $d\big(\gamma_n,\alpha^{\zeta_1}\big([r,\infty)\big)\big) \leq \kappa(\rho_{\zeta_1},L,A+K)+K$. For fix $\zeta_1$ and $K$, let $\kappa_1(L,A):=\kappa(\rho_{\zeta_1},L,A+K)+K$. Given $r\geq1$, it suffices to consider only those $L,A$ such that $L^2,A\leq r/3$. By Lemma \ref{CM 4.6}, $\gamma_n$ comes $\kappa(\rho_{\zeta_1},L,A)$-close to $\alpha^{\zeta_1}$ outside ball around $e$ of radius $r-M\kappa_1(L,A)-\lambda(\rho_{\zeta_1},L,A)$. As $L^2,A\leq r/3$ and functions $\kappa_1$ and $\lambda$ is increasing in $L,A$, $r-M\kappa_1(L,A)-\lambda(\rho_{\zeta_1},L,A)\geq r-M\kappa_1(\sqrt{r/3},r/3)-\lambda(\rho_{\zeta_1},\sqrt{r/3},r/3)$.\par
  For fix $\zeta_1$ and $K$, let $u(r)=M\kappa_1(\sqrt{r/3},r/3)+\lambda(\rho_{\zeta_1},\sqrt{r/3},r/3)$. Then, given $r\geq1$, for all $n\geq N$ where  $N=N\big(r+u(r)\big)$ from (3.1), following holds;
  $$d\big(\gamma_n,\alpha^{\zeta_1}\big([r,\infty)\big)\big) \leq \kappa(\rho_{\zeta_1},L,A).$$
  For every continuous $(L,A)$-quasi-geodesic joining $e$ to $gh_n'$. That means $gh_n'\rightarrow \zeta_1$.
\end{proof}

Next lemma shows that $\partial_cG^h$ is metrizable. Cashen and Mackay proved that the contracting boundary of Cayley graphs are metrizable (See Corollary 8.6 of \cite{cashen2017}) and here we adapt their  proof on cusped graphs.
\begin{lemma} \label{metrizable} $\partial_cG^h$ is metrizable.
\end{lemma}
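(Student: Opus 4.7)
The plan is to verify the hypotheses of the Urysohn metrization theorem for $\partial_c G^h$: Hausdorffness, regularity, and second countability. The first two properties of the bordification $\bar{G}^h$ are established in Proposition 5.15 of \cite{cashen2017} for arbitrary proper geodesic metric spaces, so they apply to the proper geodesic space $G^h$. They descend to the subspace $\partial_c G^h$, whose subspace topology coincides with $\mathcal{FQ}$ by construction.

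The substantive content is second countability, which I would obtain by adapting the Cayley-graph argument in Section 8 of \cite{cashen2017}. The key feature exploited there is that the ambient space has a countable vertex set, and this persists in the cusped setting: the Cayley graph $\Gamma(G,S)$ contributes the countable set $G$, and each of the countably many left cosets $gH_i$ contributes $gH_i \times (\mathbb{N} \cup \{0\})$ via the horoball construction. Enumerate all these vertices of $G^h$ as $\{v_k\}_{k \in \mathbb{N}}$.

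For each finite tuple of parameters $\tau = (v_k, n, C, L, A)$ with $L \geq 1$ rational, $A \geq 0$ rational, and $n, C \in \mathbb{N}$, I would define a candidate basic open set $W_\tau \subset \partial_c G^h$ consisting of those $\eta$ such that every continuous $(L, A)$-quasi-geodesic ray from $o$ representing $\eta$ passes within distance $C$ of $v_k$ at some point at distance at least $n$ from the basepoint. Since the indexing data is countable, $\{W_\tau\}$ is a countable collection, and openness of each $W_\tau$ in $\partial_c G^h$ follows directly from the fellow-travelling criterion embedded in the definition of $\mathcal{FQ}$, together with the Quasi-geodesic Image Theorem \ref{QGIT}.

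The main obstacle is showing that $\{W_\tau\}$ refines the $\mathcal{FQ}$ topology, i.e., given $\zeta$ and $r \geq 1$, producing some $\tau$ with $\zeta \in W_\tau \subset U(\zeta, r)$. For this I would fix a geodesic representative $\alpha^\zeta$, choose $v_k$ within unit distance of $\alpha^\zeta$ beyond a radius much larger than $r$, and calibrate $(n, C, L, A)$ so that any $\eta \in W_\tau$ has each of its quasi-geodesic representatives forced to fellow-travel $\alpha^\zeta$ up through distance $r$. This step combines Lemma \ref{CM 4.6}, which pulls closeness back toward $o$ along the $\rho_\zeta$-contracting ray $\alpha^\zeta$, with Theorem \ref{QGIT}, which bounds closest-point projections uniformly in $(L, A)$. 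The delicate point is arranging these estimates uniformly over the relevant quasi-geodesic constants; in Cashen-Mackay's argument the only inputs used are properness, geodesicity, and countability of the vertex set, all of which hold in the cusped setting, so the adaptation to $G^h$ is formal once one has set up the countable parametrization above.
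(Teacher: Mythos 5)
Your reduction to Urysohn via Hausdorffness, regularity, and second countability matches the paper's strategy, and the first two properties are indeed quoted from Cashen--Mackay exactly as you do. The gap is in the second-countability step, in two places. First, your sets $W_\tau$ are defined by the condition that \emph{every} $(L,A)$-quasi-geodesic representative of $\eta$ passes within a fixed distance $C$ of the vertex $v_k$; openness of such a set does not ``follow directly,'' because the analogous sets $U(\zeta,r)$ are themselves not known to be open in $\mathcal{FQ}$ --- this is precisely why Cashen--Mackay's Corollary 5.10 is needed to sandwich genuinely open sets between $U(\zeta,\phi(\rho_\zeta,r))$ and $U(\zeta,r)$, and the paper's Claim 2 invokes exactly that sandwiching. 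A perturbed boundary point $\eta'\in U(\eta,s)$ only has representatives passing within $C+k'(\rho_\eta,L,A)$ of $v_k$, not within $C$, so your condition is not stable and your basis elements need not be open without further work.

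Second, and more seriously, your premise that Cashen--Mackay's Section 8 argument uses only ``properness, geodesicity, and countability of the vertex set'' is not accurate: their proof uses the cocompact isometric action of $G$ on its Cayley graph to translate a fixed contracting bi-infinite geodesic so that it is based near an arbitrary point of $\alpha^\zeta$, producing a countable $G$-orbit of boundary points whose neighborhoods refine the topology. In the cusped space $G^h$ the action of $G$ is \emph{not} cocompact --- points deep inside horoballs are far from every group element --- so the adaptation is not formal. The paper's proof handles this with a dichotomy that your proposal never confronts: either $\alpha^\zeta$ eventually enters a horoball, in which case $\zeta$ is already one of the countably many translates $g\eta_i$ of the horoball limit points and the claim is trivial, or $\alpha^\zeta$ returns to $G$ along a sequence $\alpha_{\sigma_n}\in G$, where the translation trick (choosing between $g_n\beta$ and $g_n\bar\beta$ to avoid backtracking, and controlling the contraction of the resulting rays $\nu^n$ uniformly via Lemma \ref{almosttriangle} and Proposition \ref{Keyprop1}) can be run. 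The countable basis is then indexed by $G\times\mathbb{N}\times\{1,\dots,n\}$ through the boundary points $g\eta_i$, not by vertices of $G^h$. Without an argument replacing this use of the group action, your calibration step (producing $\tau$ with $\zeta\in W_\tau\subseteq U(\zeta,r)$ uniformly in $\zeta$) is not established.
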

\begin{proof} By Proposition 5.12 and Proposition 5.13 of \cite{cashen2017}, $\partial_cG^h$ is Hausdorff and regular. Therefore, by Urysohn metrization theorem, it suffices to show that $\partial_cG^h$ is second countable.\par
 Let $\Lambda_{c}(H_i^h)$$=$$\eta_i$. Consider the set $\mathcal{A}=\{\eta_1,...\eta_n\}$. \par
\underline{Claim 1:} Given $\zeta\in \partial_cG^h$ and $r\geq1$, there exists $R'\geq1$ such that for all $R_2$$\geq$$R_1$$\geq$$ R'$ there exists $g\in G$ and $\eta_i\in A$ such that $\zeta \in U(g\eta_i,R_2)\subseteq U(g\eta_i,R_1)\subseteq U(\zeta,r)$.\par
If $\zeta$$=$$\Lambda_{c}(g''H_i^h)$ for some left coset $g''H_i$, then  $g''\eta_i$$=$$\zeta$ and the result trivially follows. Therefore we assume $\zeta$ is not the limit set of any $gH_i^h$. Now fix an element $\eta$ of $\mathcal{A}$. Let $\alpha^{\zeta}$ be a geodesic ray starting from $e$ and representing $\zeta$. Let $\alpha:$$=$$\alpha^{\zeta}$, then $\alpha$ does not eventually go into any horoball. Assume $\alpha$ is parameterized by it's arc length with $\alpha_0$$=$$e$, then there exists a increasing sequence $\{\sigma_n\}$ of natural numbers such that $\alpha_{\sigma_n}\in G$. Also, for some $g'\in G$, $g'\eta$$\neq$$\eta$. Let $g'\eta$$=$$\eta'$. Join $\eta$ to $\eta'$ by a geodesic $\beta$. $\beta$ passes through a group element.  Assume, without loss of generality $\beta_0$$=$$e$. Choose $\rho$ such that $\alpha$, $\beta_{[0,\infty)}$ and $\bar{\beta}_{[0,\infty)}$ are $\rho$-contracting.\par
For all large $n$, at most one of $\alpha_{\sigma_n}\beta_{[0,\infty)}$ and $\alpha_{\sigma_n}\bar{\beta}_{[0,\infty)}$ stays in closed $\kappa'(\rho,1,0)$ neighborhood of $\alpha_{[0,\sigma_n]}$ for distance greater than $2\kappa'(\rho,1,0)$ from $\alpha_{\sigma_n}$. To see this, suppose  for some number $n$ there exist points $x_n \in \alpha_{\sigma_n}\beta_{[0,\infty)}$, $y_n\in \alpha_{\sigma_n}\bar{\beta}_{[0,\infty)}$ and  $x_n',y_n'\in\alpha_{[0,\sigma_n]}$ such that $d(x_n,x_n')\leq\kappa'(\rho,1,0)$, $d(y_n,y_n')\leq\kappa'(\rho,1,0)$ and $d(\alpha_{\sigma_n},x_n)=d(\alpha_{\sigma_n},y_n)>2\kappa'(\rho,1,0)$. Since $\beta$ is geodesic, $d(x_n,y_n)> 4\kappa'(\rho,1,0)$. Now, using triangle inequality we get, $d(x_n',y_n')\leq 2\kappa'(\rho,1,0)$. Further application of triangle inequality gives $d(x_n,y_n)\leq 4\kappa'(\rho,1,0)$, a contradiction. \par
For each $n$, define $g_n$$:=$$\alpha_{\sigma_n}$ if $\alpha_{\sigma_n}\beta_{[0,\infty)}$ does not remain in $\kappa'(\rho,1,0)$ neighborhood of $\alpha_{[0,\sigma_n]}$ for distance greater than $2\kappa'(\rho,1,0)$ otherwise define $g_n$$:=$$\alpha_{\sigma_n}g'$. For fix $n$, join points $e$ and $g_n\beta_{m}$ by a geodesic $\nu^{m,n}$ for each integer $m\geq0$. For fix $n$, application of Arzela-Ascoli theorem gives a subsequence of $\{\nu^{m,n}\}$ converging to a geodesic ray $\nu^m$. By Lemma \ref{Clemma}, geodesics $\alpha_{[0,\sigma_n]}$ and $g_n\beta_{[0,m]}$ is contracting and the contraction function is independent of $m$ and $n$. Therefore, by Lemma \ref{almosttriangle}, geodesic rays $\nu^n$ are uniformly contracting. If $\alpha^{g_n\eta}$ is any arbitrary geodesic in $g_n\eta$, then $\alpha^{g_n\eta}$ and $\nu^n$ are asymptotic.\par
By Proposition \ref{Keyprop1}, there exists a sublinear function $\rho'$ such that $\alpha^{g_n\eta}$ are $\rho'$-contracting. Also, definition of $g_n$ ensures that there exists a constant $C\geq0$ independent of $n$ such that $\alpha^{g_n\eta}$ comes $\kappa(\rho,1,0)$ close to $\alpha_{[0,\sigma_n]}$ outside ball around $e$ of radius $\sigma_n$-$C$.
To see this let $u_n$ and $u_n'$ be respective last points on $\alpha^{g_n\eta}$ and $g_n\beta$, which is $\kappa(\rho,1,0)$-close to $\alpha_{[0,\sigma_n]}$.
Note that conditions on $g_n$ gives that $d(\alpha_{\sigma_n},u_n')\leq 2\kappa'(\rho,1,0)+d(e,g')$. Let $z_n\in \pi_{\alpha_{[0,\sigma_n]}}(u_n)$ and $z_n'\in \pi_{\alpha_{[0,\sigma_n]}}(u_n')$ be points.
The geodesic triangle $\triangle$ with sides $\alpha_{[0,\sigma_n]}$, $\alpha^{g_n\eta}$ and $g_n\beta$ is $\delta$-thin for some $\delta>0$ depending on the contraction functions of the geodesics $\alpha$ and $\beta$.
There exist points $v_n,v_v'$ where $v_n$ lie in the portion of
 $\alpha^{g_n\eta}$ between $u_n, {g_n\eta}$
 and $v_n'$ lie in the portion of $g_n\beta$
 between $u'_n, {g_n\eta}$
 such that the distances of $v_n,v_n'$
 to the geodesic $\alpha_{[0,\sigma_n]}$ is at most $\delta$ and  $d(v_n,v_n')\leq \delta$.
The portions of $\alpha^{g_n\eta}$ between $u_n, v_n$ and  $g_n\beta$ between $u'_n,v'_n$ lie outside $\kappa(\rho,1,0)$-neighborhood of $\alpha_{[0,\sigma_n]}$. Therefore, by Theorem \ref{QGIT}, the diameters of the nearest point projections of the mentioned portions are uniformly bounded. By using triangle inequality,
there exists $K\geq0$ depending only on the contraction functions of the geodesics $\alpha$ and $\beta$ such that $d(z_n,z_n')\leq K$ for any $n$.  Now take $C$ to be $K+2\kappa'(\rho,1,0)+d(e,g')+\kappa(\rho,1,0)$. This implies that $\alpha^{g_n\eta}_{[0,\sigma_n-C]}\subseteq \bar{N}_{2\kappa'(\rho,1,0)}\alpha_{[0,\sigma_n-C]}$.   \par
First we give a condition that implies $\zeta\in U(g_n\eta,R)$. Suppose $n$ is a number such that,
\begin{equation}\label{condition 1} \sigma_n\geq R+C+2M(\kappa'(\rho,\sqrt{R/3},R/3)+\kappa'(\rho,1,0))+\lambda(\rho,\sqrt{R/3},R/3)
\end{equation}
Suppose, $\gamma\in\zeta$ is a continuous $(L,A)$-quasi-geodesic. It suffices to consider $L^2,A<R/3$. $\gamma\subseteq \bar{N}_{\kappa'(\rho,L,A)}\alpha$, so there is a point $\gamma_a$ that is $2(\kappa'(\rho,\sqrt{R/3},R/3)+\kappa'(\rho,1,0))$-close to $\alpha^{g_n\eta}_{\sigma_n-C}$. By Lemma \ref{CM 4.6}, $\gamma$ comes $\kappa(\rho',L,A)$-close to $\alpha^{g_n\eta}$ outside the ball around $e$ of radius $\sigma_n-C-2M(\kappa'(\rho,\sqrt{R/3},R/3)+\kappa'(\rho,1,0))-\lambda(\rho,\sqrt{R/3},R/3)$. By, equation \ref{condition 1} this is at least $R$. Since $\gamma$ is arbitrary, $\zeta\in U(g_n\eta,R)$.  \par
Next, we give a condition that implies $U(g_n\eta,R)\subseteq U(\zeta,r)$. Suppose $n$ is a number such that:
\begin{equation} \label{condition 2}\sigma_n-C\geq R\geq r+2M(\kappa'(\rho',\sqrt{r/3},r/3)+\kappa'(\rho,1,0))+\lambda(\rho,\sqrt{r/3},r/3)
\end{equation}
Suppose, $\gamma\in\zeta$ is a continuous $(L,A)$-quasi-geodesic such that $\gamma\in U(g_n\zeta,R)$. It suffices to consider $L^2,A<R/3$. By definition, $\gamma$ comes $\kappa(\rho',L,A)$-close to $\alpha^{g_n\eta}$ outside $N_Re$, so some point $\gamma_b$ is $2\kappa'(\rho',L,A)$-close to $\alpha_R^{g_n\eta}$, which implies that $d(\gamma_b,\alpha_R^{g_n\eta})\leq 2\kappa'(\rho',L,A)+2\kappa'(\rho,1,0)$. Now apply Lemma \ref{CM 4.6} to see that $\gamma$ comes $\kappa(\rho,L,A)$-close to $\alpha$ outside the ball around $e$ of radius at least $R-2M(\kappa'(\rho',L,A)+\kappa'(\rho,1,0))-\lambda(\rho,L,A)$, which is at least $r$, by equation \ref{condition 2}. Thus $U(g_n\eta,R)\subseteq U(\zeta,r)$. \par
 Using these two conditions we will prove the claim. Functions $\rho$ and $\rho'$ are determined by $\zeta$ and $\eta$. Given these and any $r\geq1$, define $R'$ to be right hand side of equation \ref{condition 2}. Given any $R_2\geq R_1\geq R'$, it suffices to define $g:=g_n$ for any large $n$ satisfying equation \ref{condition 1} for $R=R_2$ and equation\ref{condition 1} for $R=R_1$ simultaneously.\par
 \underline{Claim 2:} $\partial_cG^h$ is second countable.\par
 For given $\zeta\in \partial_cG^h$ and $r\geq 1$, Corollary 5.10 of \cite{cashen2017} gives an open set $U$ and number $\phi(\rho_{\zeta},r)\geq1$ such that $U(\zeta,\phi(\rho_{\zeta},r))\subseteq U\subseteq U(\zeta,r)$. For each $g$ and $n$ and $\eta_i$, choose open set $U_{g,n}^i$ such that $U(g\eta_i,\phi(\rho_{g\eta_i},n))\subseteq U_{g,n}^i\subseteq U(g\eta_i,n)$.\par
 Let U be a non-empty set $\zeta\in U$, a point. Also assume $\zeta$ is not the limit set of any horoball and $\eta$ as in proof of previous claim i.e. $\eta=\eta_i$  for some $i$. Denote $U_{g,n}^i$ by $U_{g,n}$. By definition, there exists some $r\geq1$, such that $U(\zeta,r)\subseteq U$. Let $R'$ be a constant from \underline{Claim 1} for $\zeta$ and $r$ and $R'\leq R_1$ be a natural number. As in the proof of \underline{Claim 1}, $g_n\eta$ is $\rho'$-contracting for every $n$. Define $R_2:=\phi(\rho',R_1)\geq \phi(\rho_{g_n\eta},R_1)$. Combining \underline{Claim 1} and definition of  $U_{g,n}$ we get;
 $$\zeta\in U(g\eta,R_2)\subseteq U_{g,R_1}\subseteq U(g\eta,R_1)\subseteq U(\zeta,r)$$
 Let $\mathcal{U}^i:=\{U_{g,n}^i| g\in G, n\in \mathbb{N}\}$. Then $\bigcup \mathcal{U}^i$  is a countable basis for $\partial_cG^h$.

 \end{proof}

In \cite{cashen2017}, Cashen and Mackay proved that if $\partial_cG$ is compact then every geodesic ray in $G$ is contracting (Theorem 10.1, 5 $\implies$ 6, of \cite{cashen2017}). Using the same idea, we prove the following lemma.

      \begin{lemma}\label{ContRay}
     If $\partial_cG^{h}$ is  compact then every geodesic ray in $G^{h}$ is contracting.
     \end{lemma}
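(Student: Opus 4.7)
I plan to argue by contradiction, adapting the scheme of Theorem 10.1 of Cashen–Mackay \cite{cashen2017} to the cusped setting. Suppose $\gamma:[0,\infty)\to G^h$ is a geodesic ray with $\gamma(0)=o$ that is not contracting. The goal is to use compactness of $\partial_cG^h$ to produce a contracting ray $\alpha$ from $o$ at finite Hausdorff distance from $\gamma$; Lemma~\ref{lemma1} will then force $\gamma$ itself to be contracting, yielding the contradiction.

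First I would dispose of the easy subcase that some tail $\gamma|_{[T,\infty)}$ stays inside a single horoball $gH_i^h$. Since $gH_i^h$ is Gromov hyperbolic with a unique Gromov boundary point, this tail is at finite Hausdorff distance from any vertical ray of $gH_i^h$, and vertical rays are contracting in $G^h$ by hypothesis. Lemma~\ref{lemma1} together with Lemma~\ref{Clemma} then makes $\gamma$ contracting, contrary to assumption. Hence I may assume $\gamma$ returns to the $G$-level infinitely often, and choose an increasing sequence $t_n\to\infty$ with $\gamma(t_n)\in G$.

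Setting $g_n:=\gamma(t_n)\in G$ and fixing any $\eta=\eta_j=\Lambda_c(H_j^h)$ (notation from Lemma~\ref{metrizable}), I would follow the technique of Claim~1 in the proof of Lemma~\ref{metrizable}: from the translated boundary points $g_n\cdot\eta$, produce contracting geodesic rays $\alpha_n$ from $o$ with a uniform contraction gauge $\rho'$, that fellow-travel $\gamma|_{[0,t_n-C]}$ for a uniform constant $C$. Compactness and metrizability of $\partial_cG^h$ (Lemma~\ref{metrizable}) then yield a subsequential limit $[\alpha_n]\to\zeta\in\partial_cG^h$. Fixing a geodesic representative $\alpha$ of $\zeta$, for each fixed $r$ the rays $\alpha_n$ and $\alpha$ eventually fellow-travel on $[0,r]$ up to a universal constant by Proposition~\ref{Keyprop1}, while $\alpha_n$ fellow-travels $\gamma$ on $[0,r]$ once $t_n-C\geq r$. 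Hence $\alpha$ and $\gamma$ fellow-travel on arbitrarily long initial segments and are therefore at finite Hausdorff distance, so Lemma~\ref{lemma1} delivers the contradiction.

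The main obstacle is establishing the shadowing: in the Cayley graph setting of Theorem 10.1 of \cite{cashen2017}, cocompactness of the $G$-action centers the approximation argument near $o$, but cocompactness fails on $G^h$ owing to the infinite depth of horoballs. One must work at the $G$-level via the chosen times $t_n$ and use the contractingness of vertical rays to uniformly control any excursions of $\gamma$ into horoballs between these times; the uniform shadowing estimates should mirror those in Claim~1 of Lemma~\ref{metrizable}, where translates of horoball limit points are used to construct a countable neighborhood basis for points of the boundary.
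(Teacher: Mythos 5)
Your skeleton matches the paper's: dispose of the case where the ray eventually stays in a horoball using the contracting vertical rays, pick times $t_n$ where $\gamma(t_n)=g_n\in G$, push a fixed horoball limit point $\eta$ around by these $g_n$, extract a limit $\zeta$ by sequential compactness (Lemma~\ref{metrizable}), and conclude that $\gamma$ is at bounded Hausdorff distance from a contracting representative $\alpha^{\zeta}$, hence contracting by Lemma~\ref{lemma1}. The problem is the step you yourself flag as ``the main obstacle'': producing contracting geodesic rays $\alpha_n\in g_n\eta$ based at $o$ that fellow-travel $\gamma|_{[0,t_n-C]}$. The estimates of Claim~1 in the proof of Lemma~\ref{metrizable} that you propose to mirror are not available here, because there the fellow-travelling of $\alpha^{g_n\eta}$ with $\alpha_{[0,\sigma_n]}$ is derived from the thin-triangle/projection arguments (Lemma~\ref{almosttriangle}, Theorem~\ref{QGIT}) which require $\alpha_{[0,\sigma_n]}$ to be $\rho$-contracting. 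In Lemma~\ref{ContRay} the contraction of $\gamma$ is exactly what is to be proved, so you cannot assume it; and without it there is no reason a geodesic from $o$ asymptotic to $g_n\eta$ shadows $\gamma|_{[0,t_n]}$ (think of a Euclidean half-plane glued in: a geodesic from $o$ to a point near $\gamma(t_n)$ can drift far from $\gamma$). So the route through Claim~1 is circular as stated.

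The paper sidesteps this by never replacing anything with a geodesic from $o$. It fixes a bi-infinite contracting geodesic $\beta$ through $e$ joining $\eta'=g'\eta$ to $\eta$, shows (by an elementary triangle-inequality dichotomy) that after possibly swapping $\beta$ with $\bar\beta$ the translated ray $\gamma(t_n)\beta$ leaves the $\kappa'(\rho,1,0)$-neighborhood of $\gamma|_{[0,t_n]}$ quickly, and then invokes Lemma~\ref{Lemma 4.7} --- which only needs the \emph{appended} ray to be contracting --- to conclude that the concatenations $\gamma|_{[0,t_n]}*\gamma(t_n)\beta$ are continuous $(L,A)$-quasi-geodesics for uniform $(L,A)$. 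These concatenations are themselves quasi-geodesic representatives of the ideal points $\gamma(t_n)\beta_{\infty}$, so once those ideal points subconverge to $\zeta$, the very definition of the neighborhoods $U(\zeta,r)$ (which quantifies over \emph{all} continuous $(L,A)$-quasi-geodesics in a class) forces each concatenation, and in particular its initial segment $\gamma|_{[0,t_n]}$, into a uniform neighborhood of $\alpha^{\zeta}$. That is the mechanism your outline is missing: the fellow-travelling comes for free from the topology applied to the concatenated quasi-geodesics, not from a shadowing lemma for geodesics asymptotic to $g_n\eta$. To repair your argument you should build these concatenations explicitly (including the $\beta$ versus $\bar\beta$ choice, which you do not address) rather than appeal to Claim~1 of Lemma~\ref{metrizable}.
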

     \begin{proof}
     It is sufficient to show that all geodesic rays starting from $e$ are contracting. As each $H_i$ is of infinite index, there are infinitely many points in $\partial_{c}G^{h}$. Let $\eta$ be the limit point of a horoball. There exists $g$ in $G$ such that $g\eta \neq \eta$, call $g\eta$ to be $\eta'$. Let $\beta$ be a bi infinite geodesic joining $\eta'$ to $\eta$. Clearly $\beta$ passes through a group element so without loss of generality take $\beta_{0}=e$.\par
      Let $\alpha$ be an arbitrary geodesic ray starting from $e$. For some sublinear function $\rho$, geodesic rays $\beta_{[0,\infty)}$,$\bar{\beta}_{[0,\infty)}$, $\alpha$ and all subsegments of these geodesic rays are $\rho$-contracting. Note that $\alpha$ either eventually goes in some horoball or it does not. In the former case it is readily contracting as each $gH_i^{h}$ is contracting subset and hence the tail of $\alpha$ is contracting in $G^{h}$. So we are assuming that $\alpha$ does not eventually go in any horoball. That means there exists an increasing sequence of natural numbers $\{\sigma''_n\}$ such that sequence $\{\alpha_{\sigma''_n}\}\subseteq G$ . For all large $n$, initial segment of length greater than $2\kappa'(\rho,1,0)$ of  both $\alpha_{\sigma''_n}\beta_{[0,\infty)}$ and $\alpha_{\sigma''_n}\bar{\beta}_{[0,\infty)}$ will not come in closed $\kappa'(\rho,1,0)$ neighborhood of $\alpha_{[0,\sigma''_n]}$. To see this, let $n>>1$ such that there exists points $x_n,y_n$ on $\alpha_{\sigma''_n}\beta_{[0,\infty)}$, $\alpha_{\sigma''_n}\bar{\beta}_{[0,\infty)}$ respectively and two points $x_n',y_n'$ on $\alpha_{[0,\sigma''_n]}$ with $d(x_n,x_n')$,$d(y_n,y_n')\leq \kappa'(\rho,1,0)$ and $d(\alpha_{\sigma_n''},x_n)=d(\alpha_{\sigma_n''},y_n)>2\kappa'(\rho,1,0)$. Since $\beta$ is geodesic, $d(x_n,y_n)> 4\kappa'(\rho,1,0)$. Now, using triangle inequality we get, $d(x_n',y_n')\leq 2\kappa'(\rho,1,0)$. Further application of triangle inequality gives $d(x_n,y_n)\leq 4\kappa'(\rho,1,0)$, a contradiction. Above means, after possibly exchanging $\beta$ with $\bar{\beta}$, there exists an increasing sequence(subsequence of $\{\sigma''_n\}$) of natural numbers $\{\sigma'_n\}$ such that initial segment of length greater than $2\kappa'(\rho,1,0)$ of $\alpha_{\sigma'_n}\beta_{[0,\infty)}$ does not come in $\kappa'(\rho,1,0)$ neighborhood of $\alpha_{[0,\sigma'_n]}$. Let $\alpha_{\sigma'_n}\beta(t_n)$ be the last point of $\alpha_{\sigma'_n}\beta$ at distance $\kappa(\rho,1,0)$ from $\alpha_{[0,\sigma'_n]}$ and  $\bar{\alpha}_{[0,\sigma'_n]}(s_n)$  be last point of $\bar{\alpha}_{[0,\sigma'_n]}$ at distance $\kappa(\rho,1,0)$ from $\alpha_{\sigma'_n}\beta({t_n})$. Let $\mu_n$ be a geodesic joining $\alpha_{\sigma'_n}\beta(t_n)$ and $\bar{\alpha}_{[0,\sigma'_n]}(s_n)$. From Lemma \ref{Lemma 4.7}, concatenated paths $\alpha_{[0,s_n]}*\mu_n*\alpha_{\sigma'_n}\beta([t_n,\infty))$ is $(L',A')$-quasi-geodesic for all $n$. Note that $t_n\leq 2\kappa'(\rho,1,0)$ and length of $\mu_n\leq \kappa(\rho,1,0)$. This gives length of $\alpha_{[s_n,\sigma'_n]}*\alpha_{\sigma'_n}\beta([0,t_n])$ is bounded by $2\kappa'(\rho,1,0)+\kappa(\rho,1,0)$ $\leq 3\kappa'(\rho,1,0)$. Hence there exists $(L,A)$ such that $\alpha_{[o,\sigma'_n]}*\alpha_{\sigma'_n}\beta$ is $(L,A)$-quasi-geodesic for all $n$.\par

      Since $\partial_{c}G^{h}$ is compact and  metrizable (Lemma \ref{metrizable}),  $\partial_{c}G^{h}$ is  sequentially compact. Therefore the sequence $\{\alpha_{\sigma'_n}\beta_{\infty}\}$ has a convergent subsequence. This gives a subsequence $\{\sigma_n\}$ of $\{\sigma'_n\}$ such that $\{\alpha_{\sigma_n}\beta_{\infty}\}$ converges to $\zeta \in \partial_{c}G^{h}$. By definition of convergence, for given $r> 3L^{2},3A$, quasi geodesic $\alpha_{[0,\sigma_n]}*\alpha_{\sigma_n}\beta_{[0,\infty)}$ comes $\kappa(\rho_{\zeta},L,A)$ close to $\alpha^{\zeta}$ outside $r$ ball around $e$, for all sufficiently large $n$. Therefore, for any given $r\geq1$ initial segment of $\alpha_{[0,\sigma_n]}*\alpha_{\sigma_n}\beta_{[0,\infty)}$ of length at least $r$ contained in $\kappa'(\rho_{\zeta},L,A)$ neighborhood of $\alpha^{\zeta}$, for all large $n$. Thus, $\alpha$ is in $\kappa'(\rho_{\zeta},L,A)$ neighborhood of $\alpha^{\zeta}$ hence asymptotic to $\alpha^{\zeta}$ which implies that $\alpha$ is contracting.
     \end{proof}

The following lemma  will be used to prove the sequential compactness of $G^{h}\cup \partial_cG^h$ provided  that $\partial_cG^h$ is compact (See Proposition \ref{compactf}).
 \begin{lemma}\label{building quasi-geodesic} Let $\beta$ be a contracting $(3,0)$-quasi-geodesic ray
 such that $\beta$ is concatenation of a geodesic $[o,p]$ and a geodesic ray $\nu$ starting from point $p$. We parametrize $\beta:[0,\infty)\to X$ by arc length where $\beta(0)=o$. Consider a point $q\in\beta ([0,\infty))$ which lies in $\nu$. Let $\gamma$ be any continuous $(L,A)$-quasi-geodesic joining $\beta(0)$$=$$o$ and $q$. Then there exists a $(2L+1,A)$-quasi-geodesic ray of the form $\gamma|_{[0,u]}*\mu$, where $\gamma(u)\in\gamma$ and $\mu$
 is a geodesic ray starting from $\gamma(u)$, such that $\gamma|_{[0,u]}*\mu$ is asymptotic to $\beta$.\end{lemma}
\begin{proof}
 For all $n$, there exist points $\gamma(t_n)\in\gamma$
 and $\beta(s_n)\in\beta([n,\infty))$ such that $d(\gamma,\beta|_{[n,\infty)})=d(\gamma(t_n),\beta(s_n))$.
 If $\mu_n$ is a geodesic between $\gamma(t_n)$ and $\beta(t_n)$
 then $\gamma|_{[0,t_n]}*\mu_n$ is a $(2L+1,A)$-quasi-geodesic. An application of Arzela-Ascoli theorem tells that
 there exists a quasi-geodesic ray of the form $\gamma|_{[0,u]}*\mu$ where $\gamma(u)\in\gamma$ and $\mu$
 is a geodesic ray starting from $\gamma(u)$. Each $\gamma|_{[0,t_n]}*\mu_n$ lie $R$-neighborhood of $\beta$, where $R$
 depends on $L,A$ and the contraction function of $\beta$. Thus, $\gamma|_{[0,u]}*\mu$ also lie in the $R$-neighborhood
 of $\beta$ and hence it is asymptotic to $\beta$.  \end{proof}

\begin{proposition}\label{compactf}
If $\partial_cG^{h}$ is compact, then $G^{h}\cup \partial_cG^h$ is sequentially compact.
\end{proposition}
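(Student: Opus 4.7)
Take an arbitrary sequence $\{x_n\}\subseteq G^h\cup\partial_cG^h$; I will produce a convergent subsequence by case analysis. If infinitely many $x_n$ lie in $\partial_cG^h$, compactness of $\partial_cG^h$ (sequential, via the metrizability furnished by Lemma~\ref{metrizable}) yields a convergent subsequence. If infinitely many $x_n$ lie in $G^h$ with $\{d(o,x_n)\}$ bounded, properness of $G^h$ gives a subsequence converging in $G^h$. The substantive case is $\{x_n\}\subseteq G^h$ with $L_n:=d(o,x_n)\to\infty$.

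In this case, let $\gamma_n\colon[0,L_n]\to G^h$ be a geodesic from $o$ to $x_n$. Properness and an Arzela--Ascoli extraction produce a subsequence converging uniformly on compact subsets to a geodesic ray $\alpha\colon[0,\infty)\to G^h$ with $\alpha(0)=o$; by Lemma~\ref{ContRay}, $\alpha$ is $\rho_\zeta$-contracting, so $\zeta:=[\alpha]\in\partial_cG^h$. I claim $x_n\to\zeta$ in the bordification topology of Definition~\ref{bordif}: unpacking the definition, it suffices, for every $r\geq 1$, $L\geq 1$, $A\geq 0$, and every continuous $(L,A)$-quasi-geodesic $\beta_n$ from $o$ to $x_n$, to check that $d(\beta_n,\alpha\cap N^c_ro)\leq\kappa(\rho_\zeta,L,A)$ for all sufficiently large $n$.

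To produce the required close point, fix $R\geq r$ large (to be specified). For $n$ large, the pointwise convergence $\gamma_n\to\alpha$ and Proposition~\ref{Keyprop1} give $d_{\mathrm{Haus}}(\gamma_n|_{[0,R]},\alpha|_{[0,R]})\leq k'(\rho_\zeta,1,0)$, so by Lemma~\ref{lemma1} the segments $\gamma_n|_{[0,R]}$ share a uniform (in $n$) contracting function. Extend $\gamma_n$ past $x_n$ to a geodesic ray $\tilde\gamma_n$ (contracting by Lemma~\ref{ContRay}) and apply Lemma~\ref{building quasigeodesic} with $\tilde\gamma_n$ as the contracting $(3,0)$-quasi-geodesic ray, $q=x_n\in\tilde\gamma_n$, and $\gamma=\beta_n$. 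This yields a $(2L+1,A)$-quasi-geodesic ray $\beta_n|_{[0,u_n]}*\mu_n$ asymptotic to $\tilde\gamma_n$, hence at bounded Hausdorff distance from it via Proposition~\ref{Keyprop1}; combined with the closeness of $\tilde\gamma_n|_{[0,R]}$ to $\alpha|_{[0,R]}$, this ray contains a point within some uniform $J$ of $\alpha(R)$. Since $u_n\to\infty$ as $n\to\infty$, for $n$ large this close point lies on the prefix $\beta_n|_{[0,u_n]}\subseteq\beta_n$ rather than on the geodesic tail $\mu_n$. Feeding this proximity into Lemma~\ref{CM 4.6}, applied to $\alpha$ and $\beta_n$, produces a point of $\alpha$ at distance $\geq R-MJ-\lambda(\rho_\zeta,L,A)$ from $o$ and within $\kappa(\rho_\zeta,L,A)$ of $\beta_n$; choosing $R\geq r+MJ+\lambda(\rho_\zeta,L,A)$ gives the required estimate.

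The principal obstacle is obtaining uniform-in-$n$ control on the contraction function of $\tilde\gamma_n$, since Lemma~\ref{ContRay} only furnishes contraction ray-by-ray while the Hausdorff bound in Proposition~\ref{Keyprop1} invoked by Lemma~\ref{building quasigeodesic} depends on this function. I expect to handle this by restricting attention to the initial segments $\tilde\gamma_n|_{[0,R]}$, which inherit the uniformly contracting behavior of $\alpha|_{[0,R]}$ (itself uniformly contracting by Lemma~\ref{Clemma}) through the Hausdorff closeness above and Lemma~\ref{lemma1}; since the closeness needed in Lemma~\ref{building quasigeodesic}'s construction depends only on initial data, this should suffice.
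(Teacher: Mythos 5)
Your overall decomposition (boundary subsequence / bounded subsequence / $d(o,x_n)\to\infty$) matches the paper's, but your treatment of the substantive case has a genuine gap, and it is exactly the one you flag at the end. The Arzela--Ascoli limit ray $\alpha$ controls the geodesics $\gamma_n=[o,x_n]$ only on initial segments: for each fixed $R$ you get $\gamma_n|_{[0,R]}$ uniformly Hausdorff-close to $\alpha|_{[0,R]}$ for $n\geq N(R)$, but beyond parameter $R$ the segments $\gamma_n$, and a fortiori the extended rays $\tilde\gamma_n$, carry no contraction function that is uniform in $n$ (Lemma \ref{ContRay} is purely qualitative). The place this bites is precisely where you need it not to: Lemma \ref{building quasigeodesic} confines the constructed ray $\beta_n|_{[0,u_n]}*\mu_n$ to an $R$-neighborhood of $\tilde\gamma_n$ where $R$ depends on the contraction function of the \emph{whole} ray $\tilde\gamma_n$ (the construction of $\mu$ uses points $\beta(s_n)$ with $s_n\to\infty$, and the asymptotic-Hausdorff bound comes from Proposition \ref{Keyprop1} applied to all of $\tilde\gamma_n$). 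A competitor quasi-geodesic $\beta_n$ from $o$ to $x_n$ is only forced to revisit the initial segment $\gamma_n|_{[0,R]}$ if one has uniform contraction of a set containing both endpoints $o$ and $x_n$; restricting attention to $\tilde\gamma_n|_{[0,R]}$ does not supply this, since $x_n$ lies far outside that initial segment and may drift arbitrarily far from $\alpha$ itself. So your patch does not close the gap.

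The paper's proof avoids this by a group-theoretic device you do not use: for $x_n\in G$ it fixes one contracting bi-infinite geodesic $\alpha$ through $e$ and takes the translates $\alpha_n:=x_n\alpha$, which all share the \emph{same} contraction function $\rho$ and pass through $x_n$; the limit point $a$ is then extracted from the endpoints $a_n=\alpha_n(+\infty)$ via sequential compactness of $\partial_cG^h$, not from a limit of the geodesics $[o,x_n]$. Points lying deep in horoballs are handled separately, using the vertical rays (uniformly contracting by hypothesis, again via the group action) together with Theorem \ref{Mtheorem} to identify the two candidate limits. This is where the standing hypotheses (group action, contracting vertical rays) enter essentially, consistent with the paper's remark that compactness of $\partial_c$ alone does not suffice for a general space. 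To repair your argument you would need to either import this translation trick or otherwise produce a family of uniformly contracting (quasi-)geodesics from $o$ through the $x_n$; as written, the convergence $x_n\to\zeta$ is not established.
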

\begin{proof}
Given a sequence of points $\{x_n\}$, if it has a subsequence in $\partial_cG^h$ then we are through as $\partial_cG^h$ is sequentially compact (Lemma \ref{metrizable}). So, we assume $\{x_n\}$ lie in $G^h$. If $\{d(e,x_n)\}$ is bounded then we take a subsequence of it converging to it's limit point in $G^h$. Therefore we are left with the case that the sequence $\{d(e,x_n)\}$ is unbounded. Sequence $\{x_n\}$ has a subsequence either in $G$ or in horoballs.\par
\underline{Case 1:} $\{x_n\}$ has a subsequence in $G$. \par
Without loss of generality assume $\{x_n\}\subseteq G$. Let $\alpha$ be a $\rho$-contracting bi-infinite geodesic passing through $e$. Such geodesic exists as $\abs{\partial_cG^h}\geq 2$ and every bi-infinite geodesic passes through an element of $G$. Here, we are assuming that any sub path of $\alpha$ is also $\rho$-contracting. Define $\alpha_n$$:=$$x_n\alpha$. Each $\alpha_n$ is $\rho$-contracting and $x_n\in\alpha_n$. Let $a_n:=\alpha_n(+\infty)$, $b_n:=\alpha_n(-\infty)$ and $p_n$ be a nearest point projection of $e$ onto $\alpha_n$. Since $\partial_cG^h$ is sequentially compact, there exists a subsequence $\{a_{n_k}\}$ of $\{a_n\}$ converging to a point $a$ in $\partial_cG^h$. Also, without loss of generality, we assume that $\{x_{n_k}\}\in [p_{n_k},a_{n_k})$, sub path of $\alpha_{n_k}$ starting from $p_{n_k}$ and pointing in $a_{n_k}$ direction. Our goal is to show that $\{x_{n_k}\}$ converges to $a$.\par
For given constants $L$ and $A$, let $\gamma$ be an arbitrary continuous $(L,A)$ quasi-geodesic joining $e$ to $x_{n_k}$. By  Lemma \ref{building quasi-geodesic}, there exists a geodesic ray $\mu$ starting from a point on $\gamma$ and asymptotic to $a_{n_k}$. Moreover, if starting point of $\mu$ separates $\gamma$ into two segments $\gamma_1$ and $\gamma_2$ then concatenated paths $\gamma_1*\mu$ and $\gamma_2*\mu$ will be  $(2L+1,A)$ quasi-geodesic rays. As $a_{n_k}$ converges to $a$, given $r\geq1$, for all large $k$ there exists a point $y_{n_k}$ on $\gamma_1*\mu$ such that $d\big(y_{n_k},\alpha^a[r,\infty)\big)\leq \kappa(\rho_a,2L+1,A)$. If point $y_{n_k}\in \gamma_1$, then $d\big(\gamma,\alpha^a[r,\infty)\big)\leq \kappa(\rho_a,2L+1,A)$. Now, let $y_{n_k}\in \mu$. Note that $y_{n_k}\in \gamma_2*\mu$, which is a $(2L+1,A)$ quasi-geodesic ray starting from $x_{n_k}$ and asymptotic to sub path $[x_{n_k},a_{n_k})$ of $\alpha_n$. Since sub path $[x_{n_k},a_{n_k})$ is $\rho$-contracting, we get a constant $\delta'=\delta'(\rho,L,A)$ and a point $z_{n_k}\in [x_{n_k},a_{n_k}) $ such that $d(y_{n_k},z_{n_k})\leq \delta'$. Therefore, $d(z_{n_k},\alpha^a[r,\infty))\leq \delta'+\kappa(\rho,2L+1,A)$. Path $\beta_{n_k}:=[e,p_{n_k}]*[p_{n_k},z_{n_k}]$ is a $(3,0)$ quasi-geodesic. Let $\nu_{n_k}$ is a geodesic joining $z_{n_k}$ to a point of $\alpha^a[r,\infty)$ of length at most $\delta'+\kappa(\rho,2L+1,A)$. Concatenated path $\beta_{n_k}*\nu_{n_k}$ is a $(L',A')$ quasi-geodesic whose starting point is $e$ and end point on $\alpha^a[r,\infty)$ where $L'$ and $A'$ depends only on $L$, $A$, $\rho$ and $\rho_a$. Since geodesic ray $\alpha^a$ is $\rho_a$ contracting, there exists a constant $\delta''=\delta''(\rho_a,L,A)$ such that $d\big(x_{n_k},\alpha^a\big)\leq \delta''$. For all sufficiently large $k$, $x_{n_k}$ is $\delta''$ close to $\alpha^a[r,\infty)$. Let $\delta(L,A):=\mbox{max}\{\delta'',\kappa(\rho_a,2L+1,A)\}$. Hence, for all large $k$, we get $d\big(\gamma,\alpha^a[r,\infty)\big)\leq \delta(L,A)$.\par
Note that in the definition of convergence, given $r\geq1$, it suffices to consider only those $L,A$ such that $L\leq \sqrt{r/3}$ and $A\leq r/3$. Define $J:=\delta(\sqrt{r/3},r/3)$. From previous paragraph, for all large $k$, any $(L,A)$ quasi-geodesic joining $e$ and $x_{n_k}$ with $L\leq \sqrt{r/3}$ and $A\leq r/3$ is $J$-close to $\alpha^a[r,\infty)$. An application of Lemma \ref{CM 4.6} then gives that for all large $k$, $d\big(\gamma,\alpha^a[r,\infty)\big)\leq \kappa(\rho_a,L,A)$. Hence, $x_{n_k}\rightarrow a$.\par
\underline{Case 2:} $\{x_n\}$ has a subsequence in horoballs.\par
If that subsequence eventually goes in some horoball then we are done as we have taken horoballs to be a Morse subset in $G^h$. Therefore, without loss of generality, we assume that $\{x_{n}\}$ lies on distinct horoballs. Let $x_n$ lie in the horoball $B_n$ and $\Lambda(B_n)=a_n$. Also let $x_n'$ be the point of the coset corresponding to $B_n$ beneath the point $x_n$. Let $\theta_n$ be the vertical ray(a geodesic in $G^h$) starting from $x_n'$. Path $\theta_n$ passes through $x_n$, asymptotic to the point $a_n$ in the boundary and they all are uniformly contracting. Let $p_n$ be a nearest point projection of $e$ onto $\theta_n$. Clearly $d(e,x_n')$ is unbounded. By Case 1, there exists a subsequence $\{x_{n_k}'\}$ of $\{x_n'\}$ such that $x_{n_k}'\rightarrow b$ in $\partial_cG^h$. Also, since $\partial_cG^h$ is sequentially compact, the sequence $\{a_{n_k}\}$ further has a subsequence converging to a point in $a\in\partial_cG^h$. We assume without loss of generality that $a_{n_k}\rightarrow a$. Sequence $\{x_{n_k}\}$ has a subsequence lying either on $[p_{n_k},a_{n_k})$ or on $[p_{n_k},x_{n_k}']$. Similar to the proof of case 1, one can see that the subsequence converges to either $a$ or $b$. Moreover $a=b$, as if $a\neq b$, then by Theorem \ref{Mtheorem}, $\{d(e,p_{n_k})\}$ is a bounded sequence which is not true.
\end{proof}

\begin{remark}
Let $\eta\in \partial_cG^h$. Consider a geodesic $\alpha^{\eta}$ representing $\eta$ starting from $e$. Define $x_n:=\alpha^{\eta}(n)$. $\{x_n\}\subseteq G^h$ and converges to $\eta$. That shows $G^h$ is dense in $\bar{G^h}$. Also $G^h$ is an open set of $\bar{G^h}$.  By Proposition \ref{compactf}, $\bar{G^h}$ is a compactification of $G^h$.
\end{remark}

\section{Visual size of combinatorial horoballs}
In \cite{palpandey}, we proved that in a proper geodesic metric space if the end points of a sequence of bi-infinite geodesics converge to two distinct points in the contracting boundary then the sequence of geodesic passes through a bounded set (Theorem 3.13 of \cite{palpandey}). The same statement holds true if we take bi-infinite continuous $(P,\epsilon)$-quasi-geodesics instead of bi-infinite geodesics and the proof of it is the same as that of Theorem 3.13 in \cite{palpandey}. For the sake of completeness, we include the proof of above statement for a sequence of $(P,\epsilon)$-quasi-geodesics.
\begin{theorem}\label{Mtheorem}(Theorem 3.13 of \cite{palpandey})
  Let $P\geq 1$ and $\epsilon\geq 0$. Let $X$ be a proper geodesic metric space with ${\partial_{c}X}\neq\phi$.
Consider a sequence of continuous $(P,\epsilon)$-quasi-geodesics $\gamma_{n}$ with endpoints $(x_{n},y_{n})$ in
$X\cup \partial_{c}X$. Suppose $x_{n}\rightarrow \zeta$ and $y_{n}\rightarrow \eta$ in $X\cup \partial_{c}X$, with $\zeta\neq \eta$. Then $\gamma_{n}$ passes through an uniformly bounded set.
    \end{theorem}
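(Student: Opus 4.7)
The plan is to fix a bi-infinite $\rho$-contracting geodesic joining $\zeta$ and $\eta$ and to use the Quasi-geodesic Image Theorem (Theorem \ref{QGIT}) as the main engine. Since $\zeta \neq \eta$ both lie in $\partial_c X$, the discussion right after Definition \ref{bordif} supplies a $\rho$-contracting bi-infinite geodesic $\gamma \colon \mathbb{R} \to X$ with $\gamma((-\infty,0])$ asymptotic to $\zeta$ and $\gamma([0,\infty))$ asymptotic to $\eta$; set $o := \gamma(0)$ as basepoint and $\kappa := \kappa(\rho, 3, 0)$.

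The first step is to locate, for a prescribed $R > 0$ and all large $n$, two points on $\gamma_n$ that lie $\kappa$-close to the two ends of $\gamma$ beyond $\gamma(\pm R)$. When $x_n \in X$, the convergence $x_n \to \zeta$ combined with Definition \ref{bordif} says that every continuous $(3,0)$-quasi-geodesic from $o$ to $x_n$ comes $\kappa$-close to $\gamma((-\infty,0])$ outside $N_R^c o$. Concatenating such a geodesic with $\gamma_n$ and transferring the fellow-travelling back onto $\gamma_n$ itself via Proposition \ref{Keyprop1} and Lemma \ref{CM 4.6}, I would extract a point $u_n \in \gamma_n$ with $d(u_n, \gamma((-\infty,-R])) \leq \kappa$. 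When instead $x_n \in \partial_c X$, the same conclusion follows by applying the convergence definition directly to $\gamma_n$, viewed as a $(3,0)$-quasi-geodesic ray. Symmetrically, $y_n \to \eta$ yields $v_n \in \gamma_n$ with $d(v_n, \gamma([R,\infty))) \leq \kappa$.

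In the second step, let $\delta_n$ denote the sub-arc of $\gamma_n$ between $u_n$ and $v_n$. Its endpoints project to within $\kappa$ of $\gamma(-R)$ and $\gamma(R)$, so the diameter of $\pi_\gamma(\delta_n)$ is at least $2R - 2\kappa$. If $\delta_n$ stayed outside the $\kappa$-neighborhood of $\gamma$, Theorem \ref{QGIT} would bound that projection diameter by a constant depending only on $\rho$ and $\kappa$, because the endpoint-to-$\gamma$ distances of $\delta_n$ are at most $\kappa$; choosing $R$ large enough defeats this bound, so some point of $\delta_n$ lies within $\kappa$ of $\gamma$. Iterating the same dichotomy on the two sub-arcs of $\delta_n$ split at this point, and shrinking the target window around $\gamma(0)$, produces after finitely many bisections a point of $\gamma_n$ whose projection to $\gamma$ lies within a bounded distance of $o$; such a point is itself within bounded distance of $o$, so $\gamma_n$ enters a fixed ball around $o$.

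The step I expect to be the most delicate is the first one: converting the abstract bordification convergence $x_n \to \zeta$ into the concrete statement that a point of $\gamma_n$ (rather than of some auxiliary geodesic from $o$ to $x_n$) lies close to the $\zeta$-end of $\gamma$, uniformly across the cases $x_n \in X$ and $x_n \in \partial_c X$. The key technical bookkeeping is tracking the $(L,A)$-constants through the concatenation-and-fellow-travel machinery of Proposition \ref{Keyprop1} and Lemma \ref{CM 4.6}; once the points $u_n, v_n$ are in hand, the projection-diameter estimate from Theorem \ref{QGIT} closes the argument cleanly.
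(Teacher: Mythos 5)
Your overall architecture matches the paper's: fix a contracting bi-infinite geodesic $\gamma$ from $\zeta$ to $\eta$, use the convergences $x_n\to\zeta$ and $y_n\to\eta$ to place points of $\gamma_n$ near the two ends of $\gamma$, then use contraction/Morseness of $\gamma$ to force $\gamma_n$ to pass near a fixed point. Your second step is sound in spirit, though the paper does it differently: it uses the Morse gauge $N_\gamma$ of subsegments of $\gamma$ applied to the path $[v'_n,u'_n]\cup[u'_n,u_n]_{\gamma_n}\cup[u_n,v_n]$, whose endpoints lie far out on the two halves of $\gamma$, to find a point of $\gamma_n$ near the projection $p$ of $o$. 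Your iterated QGIT bisection, as written, does not obviously terminate at a point projecting near $o$ --- each application only produces a point close to $\gamma$ \emph{somewhere} between the previous projections, with no mechanism driving those projections toward $\gamma(0)$ --- but this is repairable by the Morse argument just described.

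The genuine gap is exactly the step you flag as delicate, and it is not mere bookkeeping: it is the core of the paper's proof. The convergence $x_n\to\zeta$ only constrains continuous quasi-geodesics \emph{based at $o$} with endpoint $x_n$ (Definition \ref{bordif}), or quasi-geodesic rays based at $o$ when $x_n\in\partial_cX$, whereas $\gamma_n$ runs from $x_n$ to $y_n$ and is anchored at neither. Your proposed fix --- concatenating a geodesic $[o,x_n]$ with $\gamma_n$ --- produces a path that need not be a quasi-geodesic (it can backtrack arbitrarily), so neither the convergence of $y_n$ nor Proposition \ref{Keyprop1} nor Lemma \ref{CM 4.6} applies to it; both of those results also require the paths to share the basepoint $o$. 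The paper instead concatenates through the nearest-point projection $p_n$ of $o$ onto $\gamma_n$, obtaining genuine $(7,0)$-quasi-geodesics $[o,p_n]\cup[p_n,x_n]_{\gamma_n}$ and $[o,p_n]\cup[p_n,y_n]_{\gamma_n}$; but then the point guaranteed to be close to the far part of $\alpha^\zeta$ (resp.\ $\alpha^\eta$) may a priori lie on the connector $[o,p_n]$ rather than on $\gamma_n$ itself. Ruling this out is the substantive work (the paper's Cases 2 and 3): if a close point lies on $[o,p_n]$, one assembles a $(7,2k_1)$-quasi-geodesic joining points far out on both ends of $\gamma$ that avoids a large ball around $p$, contradicting the $N_\gamma$-Morseness of the subsegment of $\gamma$ between them. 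Without this argument (or an equivalent one) your points $u_n,v_n\in\gamma_n$ are never actually produced, and the rest of the proof has nothing to run on.
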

     \begin{proof}
    If either $\zeta$ or $\eta$ belongs to $X$, then the statement is easy to prove. So, we assume $\zeta,\eta\in \partial_{c}X $. \par
    Let $k:=\mbox{max}\{\kappa(\rho_{\zeta},2P+1,\epsilon),\kappa(\rho_{\eta},2P+1,\epsilon)\}$. Fix a base point $o$ in $X$.
For points $x,y\in X\cup \partial_{c}X$, we denote $[x,y]$ by a geodesic segment, geodesic ray or bi-infinite geodesic joining $x$ and $y$, depending on whether $x$ and $y$, none, exactly one or both lies in $\partial_{c}X$.
If $\gamma$ is any parameterized path and $x,y \in \gamma$ then $[x,y]_{\gamma}$ stands for segment of $\gamma$ between $x$ and $y$.\par
    Let $\gamma$ be a bi-infinite geodesic joining $\zeta$ and $\eta$.
    Let $p_{n}$ and $p$ be the nearest point projections from $o$ to $\gamma_{n}$ and $\gamma$ respectively. We will prove that the
    sequence $\{d(o,p_{n})\}$ is bounded.\\
    Let $\alpha_{n}:=[o,p_{n}]\cup [p_{n},x_{n}]_{\gamma_{n}}$, $\alpha:=[o,p]\cup [p,\zeta]_{\gamma}$, $\alpha'_{n}:=[o,p_{n}]\cup [p_{n},y_{n}]_{\gamma_{n}}$,
    and $\alpha':=[o,p]\cup[p,\eta]_{\gamma}$. Paths $\alpha_{n}$ and $\alpha'_{n}$,  are continuous $(2P+1,\epsilon)$-quasi-geodesics. Paths $\alpha$ and $\alpha'$ are $(3,0)$-quasi-geodesics. Starting from $o$, we parameterize them by it's arc length . Since $x_{n}\rightarrow \zeta,y_{n}\rightarrow \eta$, by definition of convergence \eqref{eqn:key1}, \eqref{eqn:key2} and definition \ref{bordif}, for any $r\geq1$ there exists $N=N(r)$ such that for all $n\geq N$
\begin{align*}\label{CM eqn}
    d\big(\alpha_{n},\alpha^{\zeta}([r,\infty))\big)\leq k\ \  \&\ \ d\big(\alpha'_{n},\alpha^{\eta}([r,\infty))\big)\leq k ,
   \end{align*}
   where $\alpha^{\zeta}$ and $\alpha^{\eta}$ are geodesic rays from $o$ representing $\zeta$ and $\eta$ respectively.
   As $\alpha,\alpha'$ are $(3,0)$-quasi-geodesics, by Proposition \ref{Keyprop1},
   $\alpha^{\zeta}$ and $\alpha$, $\alpha^{\eta}$ and $\alpha'$ lie in a bounded Hausdorff distance, say $M$.
   Since $\alpha, \alpha'$ are $(3,0)$-quasi-geodesics parameterized by it's arc length with $\alpha(0)=o=\alpha'(0)$, we get following
   $$d(\alpha_n,\alpha([\frac{\mbox{(r-M)}}{3},\infty))\leq k+M,d(\alpha'_n,\alpha'([\frac{\mbox{(r-M)}}{3},\infty))\leq k+M.$$
   Let $k_1=k+M$ and $r_1=\frac{(r-M)}{3}$. By Lemma \ref{Clemma} and Theorem \ref{Cont} there exists a Morse gauge say $N_{\gamma}$ such that every subsegment of $\gamma$ is $N_{\gamma}$-Morse. We choose $r$ large such that $r_1>2k_1+3d(o,p)+N_{\gamma}(2P+1,2k_1+\epsilon)+1$.
  For such $r$, there exists a number $N=N(r)>0$ such that for any $n\geq N$, we have the following:
  \begin{equation} \label{closeness}
    d\big(\alpha_{n},\alpha ([r_1,\infty))\big)\leq k_1\ \&\
d\big(\alpha'_{n},\alpha'([r_1,\infty))\big)\leq k_1
\end{equation}
Thus, for $n\geq N$, there exist $u_n\in\alpha_n,v_n\in\alpha ([r_1,\infty)),u'_n\in\alpha'_n,v'_n\in\alpha'([r_1,\infty))$ such that $d(u_n,v_n)\leq k_1$ and $d(v_n',v_n')\leq k_1$.\par
   \textbf{Case 1:} Suppose $u_n\in[p_{n},x_{n}]_{\gamma_{n}}$ and $u_n'\in [p_{n},y_{n}]_{\gamma_{n}}$.
Path $[v_{n}',u_{n}']\cup [u'_{n},u_{n}]_{\gamma_{n}}\cup[u_{n},v_{n}]$ is $(P,2k_1+\epsilon)$-quasi-geodesic joining $v_{n}'$ and $v_{n}$.
   Path $[v_{n}',v_{n}]_{\gamma}$, being a subsegment of $\gamma$, is $N_{\gamma}$-Morse. Therefore Hausdorff distance between $[v_{n}',u_{n}']\cup [u'_{n},u_{n}]_{\gamma_{n}}\cup[u_{n},v_{n}]$ and $[v_{n}',v_{n}]_{\gamma}$ is  bounded, say $M_1$. Note that $M_1$ is independent of $n$. That gives a point $z_{n}\in [u_{n}',u_{n}]_{\gamma_n}$ such that $d(p,z_{n})\leq M_1$. As $p_{n}$ is a nearest point projection of $o$ on $\gamma_{n}$, $d(o,p_{n})\leq M_1+d(o,p)$. \par
   \textbf{Case 2:} Suppose $u_{n}\in [o,p_{n}]$ and $u_{n}'\in [p_{n},y_{n}]_{\alpha_{n}'}$ for some number $n$. Consider $(2P+1,\epsilon)$-quasi-geodesic path $[u'_{n},p_{n}]_{\gamma_{n}}\cup [p_{n},u_{n}]_{\alpha'_{n}}$. As $d(u_{n},v_{n})\leq k_1$ and $d(u_{n}',v_{n}')\leq k_1$, the path $[v'_{n},u'_{n}]\cup [u'_{n},p_{n}]_{\gamma_{n}}\cup [p_{n},u_{n}]_{\alpha'_{n}}\cup [u_{n},v_{n}]$ is a $(2P+1,2k_1+\epsilon)$-quasi-geodesic.
     Now $d(p,v_{n})> r_1 - d(o,p)$ and hence $d(v_{n},o)>r_1-2d(o,p)$. As $d(u_{n},v_{n})\leq k_1$, therefore $ d(o,u_{n})> r_1-k_1-2d(o,p)$.
     This implies that $(2P+1,\epsilon)$-quasi-geodesic path  $[u'_{n},p_{n}]_{\gamma_{n}}\cup [p_{n},u_{n}]_{\alpha'_{n}}$ lie outside the ball $\bar{B}(p; r_1-k_1-3d(o,p))$. Again, as $d(u_{n},v_{n})\leq k_1$ and $d(u_{n}',v_{n}')\leq k_1$, the path
    $[v'_{n},u'_{n}]\cup [u'_{n},p_{n}]_{\gamma_{n}}\cup [p_{n},u_{n}]_{\alpha'_{n}}\cup [u_{n},v_{n}]$ lie outside the ball
    $B(p; r_1-2k_1-3d(o,p))$. This says that the subsegment $\bar{B}(p;r_1-2k_1-3d(o,p))\cap \gamma$ is not $N_{\gamma}$-Morse, a contradiction. Thus, Case 2 is not possible. \par
    \textbf{Case 3:} Now assume that $u_{n}\in [o,p_{n}]$ and $u_{n}'\in [o,p_{n}]$ for some number $n$. Then,
    following the same argument as in the first part of case 2, we get  $d(o,u_{n}),d(0,u_{n}')> r_1-k_1-2d(o,p)$.
    Then path $[v'_{n},u'_{n}]\cup [u'_{n},u_{n}]_{\alpha_{n}'}\cup [u_{n},v_{n}]$, which are $(1,2k_{1})$-quasi-geodesic,
    lies outside the ball $\bar{B}(p; r_1-2k_1-3d(o,p))$ which says that segment $\bar{B}(p;r_1-2k_1-3d(o,p))\cap \gamma$ is not $N_{\gamma}$-Morse, a contradiction.\par
    Thus, for all $n\geq N$, $u_n\in[p_{n},x_{n}]_{\gamma_{n}}$ and $u_n'\in [p_{n},y_{n}]_{\gamma_{n}}$ and we return to the Case 1.
     \end{proof}

   \begin{definition}(Visual size of a combinatorial horoball)\label{visual size} Fix a combinatorial horoball $gH^h$. Let $p\in G^h\setminus gH^h$ a point. Let
 $V_p(gH^h)$:=$\{x\in gH |$ there exists a continuous $(3,0)$-quasi-geodesic $\gamma$ in $G^h$ starting from $p$ such that $\gamma\cap gH^h=\{x\}$  $\}$.
We call $diam(V_p(gH^h))$ to be $visual$ $size$ of the combinatorial horoball $gH^h$ with respect to the point $p$. The combinatorial horoball $gH^h$ is said to have bounded visual size if $V_p(gH^h)$ is uniformly bounded for all $p\in G^h\setminus gH^h$.
   \end{definition}
   It is natural to define the visual size of a combinatorial horoball using geodesics but we will be dealing with continuous $(3,0)$-quasi-geodesics to prove our results, so in the Definition \ref{visual size} we have taken $(3,0)$-quasi-geodesics instead of geodesics.

\begin{proposition}\label{VDprop1}
Let $p\in G^h\setminus gH^h$, where $gH^h$ is the combinatorial horoball attached to the left coset $gH$. Then $diam(V_p(gH^h))$ is bounded.
\end{proposition}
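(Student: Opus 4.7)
The plan is to argue by contradiction. Suppose $\operatorname{diam}(V_p(gH^h))$ is infinite. Then there exists a sequence $\{x_n\}\subseteq V_p(gH^h) \subseteq gH$ with $d_{gH}(x_n,q)\to\infty$, where $q$ is a nearest-point projection of $p$ onto $gH^h$. Lemmas \ref{proper} and \ref{qi lemma} together with the combinatorial horoball distance formula force $d_{G^h}(x_n,q)\to\infty$. Since $\Lambda_c(gH^h)=\{\zeta_i\}$ is a singleton and $\overline{G^h}$ is sequentially compact (Proposition \ref{compactf}), after passing to a subsequence we have $x_n\to\zeta_i$ in $\overline{G^h}$.

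For each $n$, let $\gamma_n$ denote the $(3,0)$-quasi-geodesic from $p$ to $x_n$ with $\gamma_n \cap gH^h = \{x_n\}$ witnessing $x_n \in V_p(gH^h)$, and let $\alpha^{\zeta_i}$ be a geodesic ray from $p$ representing $\zeta_i$. Using that $gH^h$ is contracting, $\alpha^{\zeta_i}$ is within bounded Hausdorff distance of the concatenation $[p,q]*\alpha_q$, where $\alpha_q$ is the vertical ray at $q$ inside $gH^h$; consequently, $\alpha^{\zeta_i}(s)$ lies at horoball depth approximately $s-d(p,q)$ for $s\geq d(p,q)$. Applying the convergence $x_n\to\zeta_i$ through Definition \ref{bordif} (with basepoint $p$), for every $r\geq 1$ there is an $N(r)$ such that for all $n\geq N(r)$:
$$d(\gamma_n,\alpha^{\zeta_i}[r,\infty))\leq\kappa_0:=\kappa(\rho_{\zeta_i},3,0).$$

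The crucial step, which I expect to be the main obstacle, is to localize this closeness at the endpoint $x_n$. If $y\in\gamma_n$ with $y\neq x_n$, then $y$ lies outside $gH^h$, so $d(y,gH)\geq 1$. Any path in $G^h$ from $y$ to a depth-$t$ point above $q$ must enter $gH^h$ at a level-zero vertex and then execute at least $t$ upward vertical moves in the horoball, so $d(y,(q,t))\geq 1+t$. Choosing $r>d(p,q)+\kappa_0$ makes $t\geq r-d(p,q)>\kappa_0-1$ along $\alpha^{\zeta_i}[r,\infty)$, so no $y\in\gamma_n\setminus\{x_n\}$ can achieve $\kappa_0$-closeness. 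Therefore $d(x_n,\alpha^{\zeta_i}(s_n))\leq\kappa_0$ for some $s_n\geq r$, with $\alpha^{\zeta_i}(s_n)\approx(q,t_n)$ and $t_n=s_n-d(p,q)$.

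Finally, Lemma \ref{qi lemma} gives $d_{gH^h}((x_n,0),(q,t_n))\leq C\kappa_0+C$, and the combinatorial horoball distance formula evaluates this as $t_n+1$ if $t_n\geq\log_2 d_{gH}(x_n,q)$, and as $2\log_2 d_{gH}(x_n,q)-t_n+1$ otherwise. Choosing $r$ large enough that $t_n>C\kappa_0+C-1$ rules out the first alternative; the second then forces $\log_2 d_{gH}(x_n,q)\leq C\kappa_0+C-1$ uniformly in $n$, contradicting $d_{gH}(x_n,q)\to\infty$ and completing the proof.
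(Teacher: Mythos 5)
Your proposal is correct and follows essentially the same route as the paper: both argue by contradiction that $x_n\to\zeta=\Lambda_c(gH^h)$, so the witnessing $(3,0)$-quasi-geodesics must come $\kappa$-close to a ray representing $\zeta$ that travels arbitrarily deep into $gH^h$, which is impossible for paths meeting $gH^h$ only at a single depth-zero vertex. The only cosmetic differences are that the paper keeps the fixed basepoint $o$ (concatenating $[o,q_n]$ with the tail of $\gamma_n$ to get a $(7,0)$-quasi-geodesic, rather than invoking basepoint-independence at $p$) and gets the contradiction directly from the fact that the $\kappa$-ball about a deep point of $\alpha^{\zeta}$ lies in $gH^h\setminus gH$, which makes your concluding horoball-distance-formula computation unnecessary.
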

\begin{proof}
Let $o$ be a base point. Suppose $diam(V_p(gH^h)$ is unbounded, then there exists a sequence of points $\{x_n\}$ in $V_p(gH^h)\subseteq gH$ such that $d(o,x_n)\rightarrow \infty$. As $x_n\in V_p(gH^h)$, there exists a $(3,0)$-quasi-geodesic $\gamma_n$ starting from $p$ such that $\gamma_n\cap gH^h=\{x_n\}$. Let $\Lambda(gH^h)=\zeta$. Then $\{x_n\}$ has a subsequence converging to $\zeta$. Without loss of generality, let us assume $x_n\rightarrow \zeta$. \par
Let $q_n$ be a nearest point projection of $o$ onto $\gamma_n$. Parameterize each $\gamma_n$ by it's arc length with $\gamma_n(0)=p$. Let $\beta_n:=[o,q_n]*\gamma_n|_{[q_n,x_n]}$, where $\gamma_n|_{[q_n,x_n]}$ is restriction of $\gamma_n$ from $q_n$ until $x_n$. Clearly $\beta_n$ is $(7,0)$-quasi-geodesic starting from $o$. By definition of convergence, given $r\geq1$, there exists $N=N(r)$ such that for all $n\geq N$,
    $d\big(\beta_{n},\alpha^{\zeta}([r,\infty))\big)\leq$ $\kappa(\rho_{\zeta},7,0)$. Here $\alpha^{\zeta}$ is a geodesic representative of $\zeta$ starting from $o$. Since $d(o,q_n)\leq d(o,p)$, for all large $r$, points of $\beta_n$ which are $\kappa(\rho_{\zeta},7,0)$ close to $\alpha^{\zeta}([r,\infty))$ lies on $\gamma_n$. Note that $\alpha^{\zeta}$ eventually goes in $gH^h$ which gives a number $r\geq1$ such that for all $r'\geq r$, closed ball $\bar{B}\big(\alpha^{\zeta}_{r'},\kappa(\rho_{\zeta},7,0)\big)$ lies in $gH^h\setminus gH$. Therefore, for all sufficiently large $n$, we get a point other than $x_n$ on $\gamma_n$ in $gH^h\setminus gH$, contradicting the fact that $\{x_n\}=\gamma_n\cap gH^h$.
\end{proof}

\begin{theorem}\label{VDthm} Let $G$ be a finitely generated group and $\mathcal H$ be a finite collection of finitely generated subgroups of $G$. Suppose $H^h$ is contracting in $G^h$ for every $H\in\mathcal H$ and $\partial_cG^h$ is compact.
Then all combinatorial horoballs $gH^h$ have uniformly bounded visual size.

\end{theorem}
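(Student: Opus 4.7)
The plan is a proof by contradiction that leverages the contracting hypothesis on vertical rays together with a Hausdorff–fellow-travelling argument coming from Proposition~\ref{Keyprop1}. Suppose that the visual sizes of $g_nH_{i_n}^h$ are not uniformly bounded, so one may choose $p_n\in G^h\setminus g_nH_{i_n}^h$ and $x_n,y_n\in V_{p_n}(g_nH_{i_n}^h)$ with $d_{G^h}(x_n,y_n)\to\infty$. Pass to a subsequence with $i_n=i$ fixed, and, after first shifting $p_n$ by a bounded amount to a nearby Cayley-graph vertex, translate by $p_n^{-1}\in G$. In the translated picture the basepoint is $e$; the horoballs $\tilde H_n^h:=p_n^{-1}g_nH_i^h$ are left cosets of the fixed $H:=H_i$ avoiding $e$; the images $\tilde x_n,\tilde y_n\in V_e(\tilde H_n^h)\cap \tilde H_n$ satisfy $d_{G^h}(\tilde x_n,\tilde y_n)\to\infty$; and the boundary points $\tilde\zeta_n:=\Lambda(\tilde H_n^h)$ lie in the compact set $\partial_cG^h$. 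By the standing hypothesis and $G$-equivariance, the vertical rays $\tilde\nu_n^x,\tilde\nu_n^y$ in $\tilde H_n^h$ based at $\tilde x_n,\tilde y_n$ are $\rho$-contracting geodesic rays for a single sublinear $\rho$ independent of $n$, each asymptotic to $\tilde\zeta_n$.

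Let $\tilde\alpha_n,\tilde\beta_n$ be the witnessing $(3,0)$-quasi-geodesics from $e$ to $\tilde x_n,\tilde y_n$ (so $\tilde\alpha_n\cap\tilde H_n^h=\{\tilde x_n\}$, $\tilde\beta_n\cap\tilde H_n^h=\{\tilde y_n\}$). I then form the continuous concatenations $\tilde A_n:=\tilde\alpha_n*\tilde\nu_n^x$ and $\tilde B_n:=\tilde\beta_n*\tilde\nu_n^y$ and verify each is a $(7,0)$-quasi-geodesic ray from $e$ asymptotic to $\tilde\zeta_n$. The only nontrivial verification concerns a mixed pair $u\in\tilde\alpha_n$ and $v=\tilde\nu_n^x(b)$: since $u$ lies outside $\tilde H_n^h$ while $v$ sits at horoball level $b$, any path from $u$ to $v$ must ascend $b$ levels inside $\tilde H_n^h$, giving $d(u,v)\geq b$; the triangle inequality with the $(3,0)$ property of $\tilde\alpha_n$ yields $d(u,v)\geq d(u,\tilde x_n)-b\geq a/3-b$, where $a$ denotes the $\tilde\alpha_n$-parameter distance from $u$ to $\tilde x_n$. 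Taking the maximum gives $d(u,v)\geq(a+b)/7$.

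Now let $\tilde\gamma_n$ be a geodesic ray from $e$ to $\tilde\zeta_n$. Since $\partial_cG^h$ is compact, I may assume (after passing to a further subsequence) that the contracting functions $\rho_{\tilde\zeta_n}$ are uniformly dominated by a single sublinear function $\rho^*$, so each $\tilde\gamma_n$ is $\rho^*$-contracting. Proposition~\ref{Keyprop1}, applied with the contracting $\tilde\gamma_n$ and the asymptotic $(7,0)$-quasi-geodesic rays $\tilde A_n,\tilde B_n$ all based at $e$, yields
\[
d_{\mathrm{Haus}}(\tilde A_n,\tilde\gamma_n)\leq k'(\rho^*,7,0),\qquad d_{\mathrm{Haus}}(\tilde B_n,\tilde\gamma_n)\leq k'(\rho^*,7,0),
\]
and the triangle inequality gives $d_{\mathrm{Haus}}(\tilde A_n,\tilde B_n)\leq K$ for a constant $K$ independent of $n$.

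The combinatorial structure of $\tilde H_n^h$ then supplies the contradiction. For each $T>K$, the level-$T$ point $\tilde\nu_n^x(T)$ has $d_{G^h}$-distance at least $T>K$ from every point outside $\tilde H_n^h$, so no point of $\tilde\beta_n$ is within $K$ of it, and the close companion on $\tilde B_n$ must be some $\tilde\nu_n^y(T')$. By Lemma~\ref{qi lemma}, $d_{\tilde H_n^h}(\tilde\nu_n^x(T),\tilde\nu_n^y(T'))\leq C_1$ for a uniform $C_1$; however, in a combinatorial horoball a direct calculation gives
\[
\min_{T'\geq 0}d_{\tilde H_n^h}\bigl((\tilde x_n,T),(\tilde y_n,T')\bigr)\;=\;\max\bigl\{1,\,\lceil\log_2 d_H(\tilde x_n,\tilde y_n)\rceil-T+1\bigr\},
\]
since a horizontal edge between the two vertical fibres first becomes available at level $\lceil\log_2 d_H(\tilde x_n,\tilde y_n)\rceil$. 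Hence for $T\in\bigl(K,\,\log_2 d_H(\tilde x_n,\tilde y_n)-C_1\bigr)$ no admissible $T'$ exists, contradicting the Hausdorff bound. Since Lemma~\ref{qi lemma} together with the logarithmic growth of the combinatorial horoball forces $d_H(\tilde x_n,\tilde y_n)\to\infty$, this range is nonempty for large $n$, completing the argument. The main obstacle I anticipate is precisely the uniform control of the contracting functions $\rho_{\tilde\zeta_n}$ for the varying boundary points—this is where the compactness hypothesis on $\partial_cG^h$ plays its essential role—alongside the careful bookkeeping among the three metrics $d_{G^h}$, $d_{\tilde H_n^h}$, and $d_H$ required in the horoball estimate.
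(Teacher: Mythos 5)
Your overall strategy is genuinely different from the paper's, but it has a gap at the step where you pass from compactness of $\partial_cG^h$ to a single sublinear function $\rho^*$ dominating the contraction functions $\rho_{\tilde\zeta_n}$ of the geodesic rays $\tilde\gamma_n$ from the basepoint to the varying parabolic points $\tilde\zeta_n$. Compactness of the contracting boundary in the Cashen--Mackay topology does not give uniform control of contraction gauges along a (sub)sequence of boundary points; the topology of fellow-travelling quasi-geodesics is designed precisely so that sequences can converge without any uniform Morse gauge, and if compactness did yield such uniformity, hyperbolicity of $G^h$ (the paper's main theorem) would follow almost immediately. Nor can you recover uniformity from the data you actually have: the vertical rays in $\tilde H_n^h$ are uniformly $\rho$-contracting by equivariance, but they are based at points of $\tilde H_n$ possibly far from the basepoint, and the connecting geodesic segments from the basepoint to $\tilde H_n$ carry no uniform contraction bound (Lemma \ref{ContRay} gives contraction of each individual ray, not a uniform gauge, so Lemma \ref{almosttriangle} cannot be applied uniformly). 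Since $k'(\rho^*,7,0)$ is exactly the constant $K$ on which your endgame rests, the horoball computation at the end --- which is otherwise sound --- does not get off the ground. A more minor issue: $p_n$ may lie arbitrarily deep inside some other horoball, so ``shifting $p_n$ by a bounded amount to a nearby Cayley-graph vertex'' is not available; you would have to base the construction at $\tilde p_n$ itself, which is harmless but should be said.

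The paper avoids the uniformity problem by normalizing differently: it translates by $x_n'^{-1}\in H$, so the horoball is the fixed $H^h$ and the relevant boundary point is the single fixed $\zeta=\Lambda(H^h)$, whose one contraction function $\rho_\zeta$ suffices. The contradiction is then extracted not from Hausdorff fellow-travelling of two quasi-geodesic rays, but from the definition of convergence $y_n\to\zeta$ in the FQ topology: the witnessing $(3,0)$-quasi-geodesic to $y_n$, suitably extended, must come $\kappa(\rho_\zeta,L,A)$-close to $\alpha^\zeta$ far out, hence must enter $H^h\setminus H$, contradicting that it meets $H^h$ only at $y_n$; the case where the observation point $q_n$ escapes to infinity is handled with Proposition \ref{compactf} and Theorem \ref{Mtheorem}. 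If you wish to keep your Hausdorff-comparison scheme, you would need to replace the appeal to a uniform $\rho^*$ by an argument pinned to a single fixed boundary point in this way.
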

\begin{proof}
Let us first consider the combinatorial horoball $H^h$, where $H\in\mathcal H$.
We will prove that there exists $M\geq0$ such that  $diam(V_p(H^h))$ is at most $M$ for all $p\in G^h\setminus H^h$.\\
Suppose not, then there exists a sequence of points $\{p_n\}$ and points $x_n,x_n'\in V_{p_n}(H^h)$ with $diam(V_{p_n}(H^h))\geq d(x_n,x_n')\geq n$. Let $\gamma_n$ and $\gamma_n'$ be respective $(3,0)$-quasi-geodesics for points $x_n$ and $x_n'$ from $p_n$, as in the definition of $V_{p_n}(H^h)$. Note that $x_n,x'_n\in H$. Consider the points $q_n := x_n'^{-1}p_n$, $y_n:=x_n'^{-1}x_n$ and paths $\mu_n:=x_n'^{-1}\gamma_n$, $\nu_n:=x_n'^{-1}\gamma_n'$. Then $e$ (the identity element of $G$) and $y_n$ belongs to $V_{q_n}(H^h)$ and $\nu_n$, $\mu_n$ are respective $(3,0)$-quasi-geodesics for points $e,y_n$. Also $d(e,y_n)\geq n$. The limit set  $\Lambda(H^h)$ is a singleton set in $G^h$ and let $\Lambda(H^h)=\{\zeta\}$. Let $\alpha^{\zeta}$ be the vertical ray in $H^h$ starting from $e$. Then, after passing to a subsequence if necessary,  $y_n$ converges to $\zeta$. \par
First, consider the case where the sequence $\{d(e,q_n)\}$ is bounded.  Let $\beta_n:=[e,q_n]*\mu_n$. Since $l([e,q_n])$ is uniformly bounded, there exists $L\geq1$ and $A\geq0$ such that $\beta_n$ is a $(L,A)$-quasi-geodesic. By definition of convergence, given $r\geq1$ there exists number $N=N(r)$ such that for all $n\geq N$, $\beta_n$ comes $\kappa(\rho_{\zeta},L,A)$ close to $\alpha^{\zeta}[r,\infty)$. As $\{d(e,q_n)\}$ is bounded, for a sufficiently large $r$, a point of $\mu_n$ lies $\kappa(\rho_{\zeta},L,A)$ close to $\alpha^{\zeta}[r,\infty)$ and that point also lies in $H^h\setminus H$. This is a contradiction as we have taken $\mu_n\setminus \{y_n\}$ outside $H^h$. Therefore, we are left with the case that the sequence $\{d(e,q_n)\}$ is unbounded. By Proposition \ref{compactf}, there exists a subsequence of $\{q_n\}$ converging to a point $\eta\in\partial_cG^h$. Without loss of generality, we can assume $q_n\rightarrow \eta$.\par
\textbf{Case 1:} $\eta\neq \zeta$.\par
Let $z_n$ be a nearest point projection of $e$ onto $\mu_n$. Consider the path $\beta_n':=[e,z_n]*\mu_n|_{[z_n,y_n]}$. Each $\beta_n'$ is a $(7,0)$-quasi-geodesic. By Theorem \ref{Mtheorem}, $l([e,z_n])$ is uniformly bounded. Since $y_n\rightarrow \zeta$, by arguments same as in previous paragraph we get: for all large $n$, $\mu_n$ lies in $H^h\setminus H$; a contradiction.\par
\textbf{Case 2:} $\eta=\zeta$.\par
Consider paths $\nu_n$ connecting $e$ and $q_n$. Note that $\nu_n$ is a $(3,0)$-quasi-geodesic. Since $q_n\rightarrow \zeta$, by arguments similar to previous two cases we find: for all large $n$, $\nu_n$ has a point in $H^h\setminus H$, a contradiction.
\par
As the collection $\mathcal H$ of subgroups is finite and
the left action of any element of $G$ on $G^h$ is isometry, so we have the required result.
\end{proof}

\section{Main Result}\label{MainThm}
Let $X$ be a geodesic metric space. A geodesic triangle in $X$ is said to have a $\delta$-barycenter if there exists point $q\in X$ such that each side has distance at most $\delta$ from $q$. Point $q$ will be called a \textit{$\delta$-barycenter} for the triangle. $X$ is said to be hyperbolic if there exists a $\delta\geq0$ such that every geodesic triangle has a $\delta$-barycenter.
We refer the reader to see  Proposition 1.17 of Chapter III.H in \cite{bridson} and   Section 6 in \cite{Bowditchnotes} for different equivalent notions of hyperbolicity.
\begin{theorem}\label{Mthm2}
Let $G$ be a finitely generated group and  $\mathcal{H}=\{H_i\}$ be a finite collection of finitely generated infinite index subgroups of $G$. Then, $G$ is hyperbolic relative to $\mathcal H$ if and only if
every combinatorial horoball $H_i^h$ is contracting in the cusped space $G^h$ and $G^h$ has compact contracting boundary.
\end{theorem}

\begin{proof}
Suppose $G$ is hyperbolic relative to $\mathcal{H}$. Then the cusped space $G^h$
is a hyperbolic metric space, combinatorial horoballs are  quasiconvex in $G^h$
and hence $gH^h$ is contracting in $G^h$ for every $g\in G, H\in\mathcal{H}$. The combinatorial horoball $gH^h$ contracting and hyperbolic implies that any vertical geodesic ray in $gH^h$ is also contracting.\\
Conversely, to prove that $G$  is hyperbolic relative to $\mathcal{H}$, we need only to show that $G^h$ is a hyperbolic metric space.
Suppose $G^h$ is not a hyperbolic metric space, then there exists a sequence of positive reals $(\delta_n)$ and a
sequence of geodesic triangles $\{\triangle(x_n,y_n,z_n)\}$ in $G^h$ such that the triangle $\triangle(x_n,y_n,z_n)$ has a $\delta_n$-barycenter in $G^h$ with $\delta_n\rightarrow \infty$. Here $\delta_n$ is taken to be minimal i.e. if $\delta'_n<\delta_n$ then the $\triangle(x_n,y_n,z_n)$ has no $\delta'_n$-barycenter. Let $q_n\in G^h$ be a $\delta_n$-barycenter of $\triangle(x_n,y_n,z_n)$. Let $\gamma^1_n$ be a geodesic joining $x_n$ to $y_n$ , $\gamma^2_n$ be a geodesic joining $y_n$ to $z_n$ and $\gamma^3_n$ be a geodesic joining $z_n$ to $x_n$.\\
\textbf{Case 1:} $q_n$'s lie in $G$. \par
\noindent After multiplication with $q_n^{-1}$ barycenters of $\triangle(x_n,y_n,z_n)$ will come at $e$, the identity of $G$. So, without loss of generality, we can assume $e$ to be a $\delta_n$-barycenter of $\triangle(x_n,y_n,z_n)$. Let $p^1_n$, $p^2_n$ and $p^3_n$ be respective nearest point projection from $e$ onto sides $\gamma^1_n$, $\gamma^2_n$ and $\gamma^3_n$. Let $\alpha^{1}_{x_n}:=[e,p^1_n]*{\gamma^1_n}_{[p^1_n,x_n]}$, $\alpha^{1}_{y_n}:=[e,p^{1}_n]*{\gamma^{1}_n}_{[p^1_n,y_n]}$, similarly we define $\alpha^{2}_{y_n}$, $\alpha^{2}_{z_n}$, $\alpha^{3}_{z_n}$ and $\alpha^{3}_{x_n}$. Here “$*$” means concatenation of two paths. All these paths are continuous $(3,0)$-quasi-geodesic. Since $\delta_n\rightarrow \infty$, at least one of the sequence among $\{d(e,x_n)\}$, $\{d(e,y_n)\}$ and $\{d(e,z_n)\}$ is unbounded. If exactly one sequence is unbounded and rest are bounded then sequences $\{d(e,\gamma^1_n)\}$, $\{d(e,\gamma^2_n)\}$ and $\{d(e,\gamma^3_n)\}$ are bounded, contradiction to $\delta_n\rightarrow \infty$. Therefore we have two subcases:\par
\noindent\textbf{Subcase 1:}  The sequences $\{d(e,y_n)\}$ and $\{d(e,z_n)\}$ are unbounded while $\{d(e,x_n)\}$ is bounded. As $\bar{G^h}$ is sequentially compact, sequences $\{y_n\}$ and $\{z_n\}$ have subsequences converging to points in $\partial_{c}G^h$. Assume, without loss generality that $y_n\rightarrow \zeta_1$ and $z_n\rightarrow \zeta_2$ in $\partial_cG^h$. If $\zeta_1\neq\zeta_2$ then application of Theorem \ref{Mtheorem} gives that $\{d(e,\gamma^2_n)\}$ is bounded. Since $\{d(e,x_n)\}$ is bounded, we get that $\{d(e,\gamma^1_n)\}$, $\{d(e,\gamma^2_n)\}$, $\{d(e,\gamma^3_n)\}$ are bounded, which gives contradiction to $\delta_n\rightarrow \infty$.\par
Suppose $\zeta_1=\zeta_2=\zeta$. Given $r\geq1$, by definition of $x_n,y_n \rightarrow \zeta$, there exists $N=N(r)$ such that for $n\geq N$ \;
\begin{equation}
d\big(\alpha^{1}_{y_n},N^c_ro\cap \alpha^{\zeta}\big),d\big(\alpha^{2}_{y_n},N^c_ro\cap \alpha^{\zeta}\big),d\big(\alpha^{3}_{z_n},N^c_ro\cap \alpha^{\zeta}\big)\leq \kappa(\rho_{\zeta},3,0)
\end{equation}
There exists $r_i\geq r$ and $s_i>0$, where $i=1,2,3$, such that all the distances $d(\alpha^{\zeta}(r_1),\alpha^{1}_{y_n}(s_1)),
  d(\alpha^{\zeta}(r_2),\alpha^{2}_{y_n}(s_2)), d(\alpha^{\zeta}(r_3),\alpha^{3}_{z_n}(s_3))$ are at most $ \kappa(\rho_{\zeta},3,0)$.
  Let $r_m=\min\{r_1,r_2,r_3\}$. As $\alpha^{\zeta}$ is Morse, therefore there exists a number $\kappa_1=\kappa_1(\rho_{\zeta})>0$ and $t_i\leq s_i$
  such that the distances $d(\alpha^{\zeta}(r_m),\alpha^{1}_{y_n}(t_1)) $,  $d(\alpha^{\zeta}(r_m),\alpha^{2}_{y_n}(t_2))$,
   $ d(\alpha^{\zeta}(r_m),\alpha^{3}_{z_n}(t_3))$ are at most $\kappa_1$.
  The lengths of $\alpha^{1}_{y_n}|_{[0,t_1]}$, $\alpha^{2}_{y_n}|_{[0,t_2]}$, $\alpha^{3}_{z_n}|_{[0,t_3]}$ tend to infinity
  as $r\to\infty$.
  By choosing $r$ large enough we have $d\big(\alpha^{\zeta}(r_m),\gamma^i_n\big)< \delta_n$ for all $n\geq N(r)$ and for all $i\in\{1,2,3\}$. This contradicts the minimality of $\delta_n$ for $n\geq N(r)$. Therefore $\zeta_1\neq\zeta_2$.\\
\noindent\textbf{Subcase 2:} Suppose all the three Sequences $\{d(e,y_n)\}$, $\{d(e,z_n)\}$, $\{d(e,x_n)\}$ are unbounded. As in previous case, assume that $x_n\rightarrow \zeta_1$, $y_n\rightarrow \zeta_2$ and $z_n\rightarrow \zeta_3$ in $\partial_cG$. If $\zeta_1\neq \zeta_2\neq\zeta_3\neq \zeta_1$ then it will contradict, as in Case 1, to $\delta_n\rightarrow \infty$.
Suppose $\zeta_1=\zeta_2=\zeta$. Given $r\geq 1$, by definition of $x_n,y_n\rightarrow \zeta$, there exists $N=N(r)$ such that for all $n\geq N$
\begin{equation}
d\big(\alpha^{1}_{x_n},N^c_ro\cap \alpha^{\zeta}\big),d\big(\alpha^{2}_{y_n},N^c_ro\cap \alpha^{\zeta}\big),d\big(\alpha^{3}_{y_n},N^c_ro\cap \alpha^{\zeta}\big)\leq \kappa(\rho_{\zeta},3,0)
\end{equation}
From the above equation and arguments similar to \textbf{Subcase 1}, we get a large $r$ and a point on $\alpha^{\zeta}$ such that for all $n\geq N(r)$, the distance of that point from all three sides of $\triangle_n$ is strictly less than $\delta_n$. This contradicts the minimality of $\delta_n$ for $n\geq N(r)$, hence $\zeta_1\neq\zeta_2$. Similarly one can show that $\zeta_2\neq\zeta_3$ and $\zeta_3\neq \zeta_1$.\par
\noindent \textbf{Case 2:} $q_{n}$'s lie in horoballs. \\
The collection $\mathcal H$ of subgroups finite implies that there exists a subsequence $q_{n_k}$ of $\{q_n\}$ such that $q_{n_k}$ lies in translates of $H^h$ for some $H\in\mathcal H$.
After multiplication with suitable group elements, we can assume that  $q_{n_k}$'s lie in $H^h$. So, without loss of generality, we can assume that $q_n\in H^h$. Since $q_n\in H^h$, $q_n=(h_n,n')$ for some $h_n\in H$ and a non-negative integer $n'$. It is important to note that the sequence $\{n'\}$ of natural numbers is unbounded otherwise using the idea of case 1, one can  find a contradiction to $\delta_n\rightarrow \infty$. Therefore we assume $n'\rightarrow \infty$. First, consider the case when all sides of $\triangle_n$ penetrates $H^h$. Since $H^h$ is hyperbolic and contracting in $G^h$, by Theorem \ref{VDthm}, there exists $C_1\geq0$ such that $\triangle_n$ has a $C_1$-barycenter in $H^h$ for all $n$, contradiction to $\delta_n\rightarrow \infty$.\par
 Let $p_n^1$, $p_n^2$ and $p_n^3$ be a nearest point projection of $q_n$ onto $\gamma_n^1$, $\gamma_n^2$ and $\gamma_n^3$ respectively. Consider $(3,0)$-quasi-geodesics $\alpha^{1}_{x_n}:=[q_n,p^1_n]*{\gamma^1_n}_{[p^1_n,x_n]}$, $\alpha^{1}_{y_n}:=[q_n,p^{1}_n]*{\gamma^{1}_n}_{[p^1_n,y_n]}$, similarly define $\alpha^{2}_{y_n}$, $\alpha^{2}_{z_n}$, $\alpha^{3}_{z_n}$ and $\alpha^{3}_{x_n}$. Parameterize these continuous $(3,0)$-quasi-geodesic by it's arc length, with initial point $q_n$. Also take arc length parameterization of each $\gamma_n^i$ with $\gamma_n^1(0)=x_n$, $\gamma_n^2(0)=y_n$ and $\gamma_n^3(0)=z_n$. \par

Consider the case when exactly one side, say $\gamma_n^1$, doesn't penetrate $H^h$. Let $m_{[q_n,p^1_n]}$ and $m_{\gamma_n^3}$ be the respective last point of $\alpha^{1}_{x_n}$, and $\gamma_n^3$ in $H^h$. Also let $m_{\gamma_n^2}$ be the first point of $\gamma_n^2$ intersecting $H^h$. By Theorem \ref{VDthm}, there exists $C_2\geq0$ such that $d(m_{[q_n,p^1_n]}, m_{\gamma_n^3})$, $d(m_{[q_n,p^1_n]},\ m_{\gamma_n^2})\leq C_2$. Since $\{l_{[q_n,p^1_n]}\}$ is unbounded, for all large $n$, distance of $m_{[q_n,p^1_n]}$ to sides of $\triangle_n$ is $< \delta_n$, contradiction to the minimality of $\delta_n$.\par

Assume, exactly two sides say $\gamma_n^1$ and $\gamma_n^2$ does not penetrate. If $m_{[q_n,p^1_n]}$ and $m_{[q_n,p^2_n]}$ be respective last point of $\alpha^{1}_{x_n}$ and $\alpha^{2}_{y_n}$ in $H^h$. By Theorem \ref{VDthm}, there exists some $C_3\geq0$ such that $d(m_{[q_n,p^2_n]},m_{[q_n,p^1_n]})$, $d(m_{[q_n,p^2_n]},m_{\gamma_n^3})\leq C_3$. By arguments in previous, we get contradiction to minimality of $\delta_n$.\par
Lastly, consider the case when none of the sides penetrates $H^h$. Let $m_{[q_n,p^1_n]}$,$m_{[q_n,p^2_n]}$ and $m_{[q_n,p^3_n]}$ be the respective last point of $\alpha^{1}_{x_n}$, $\alpha^{2}_{y_n}$ and  $\alpha^{3}_{x_n}$ in $H^h$.  By Theorem \ref{VDthm}, diam$\{m_{[q_n,p^1_n]},m_{[q_n,p^2_n]},m_{[q_n,p^3_n]}\}$ is uniformly bounded. Therefore, as previously, this contradicts the minimality of $\delta_n$ for all large $n$.

\end{proof}

\section{Dynamics of relatively contracting element on the boundary}

\begin{definition} Let $G$ be a finitely generated and $G^h$ is a cusped space for some finite collection of finitely generated group. Consider the usual $G$ action on $G^h$. An element $g\in G$ is said to be relatively contracting  if the map $\mathbb{Z}\rightarrow G^h$ $:n\rightarrow g^n x_0$ is a quasi-isometric embedding and the image is a contracting subset of $G^h$ for some(hence any) $x_0\in G^h$.
\end{definition}
A relatively contracting element $g$ fixes two points of the boundary $\partial_cG^h$ which we denote by $g^{-\infty}$ and $g^{\infty}$.

\begin{remark}  If an element $g\in G$ is relatively contracting for $G$-action on $G^h$, then it is also contracting for $G$-action on itself. This follows from Theorem 1.1 of \cite{Tarik}.\end{remark}
A weaker version of North-South dynamics of  contracting elements of $G$ on $\partial_cG$ has been proved by Cashen-Mackay (See Lemmas 9.2, 9.3 and Theorem 9.4 of  \cite{cashen2017}) and  their  proof goes through verbatim in our case to prove the same weak North-South dynamics of relatively contracting elements.

\begin{theorem}(Weak north-south dynamics for relatively contracting elements)(Theorem 9.4 of \cite{cashen2017})\label{NSDynamics} Let $g\in G$ be a relatively contracting element. For every open set $V$ containing $g^{\infty}$ and every compact set $C\subseteq \partial_cG^h\setminus \{g^{-\infty}\}$ there exists an $N$ such that for all $n\ge N$ we have $g^nC\subseteq V$.
\end{theorem}

Next, we give an application of Weak North-South dynamics of relatively contracting element of $G$ on $\partial_cG^h$.

\begin{theorem}\label{dense}
Let $G$ be a finitely generated group and $\mathcal H$ be a finite collection of finitely generated subgroups of $G$ such that $H^h$ is contracting for all $H\in\mathcal H$. Let $\eta$ be a non-parabolic point i.e. $\eta$ is not the limit point of $gH_i^h$ for any $H_i\in\mathcal{H}$ and $g\in G$. Then the orbit $G\eta$ of $\eta$ is dense in $\partial_cG^h$.
\end{theorem}
\begin{proof}
\underline{Case 1:} Let $\zeta:=\Lambda_cgH_i^h$ for some $H_i\in\mathcal{H}$ and $g\in G$.   We will prove  that there exists a sequence $\{h_n\}$ in $gH_i$  $h_n\eta\rightarrow \zeta$.\par
Let $\beta$ be a bi-infinite geodesic joining $\zeta$ and $\eta$. Also, assume $\beta_0=e$, $\beta(-\infty)=\zeta$, $\beta(\infty)=\eta$ and $\beta(0,\infty)$ lie outside $gH_i^h$.
Let $\{h_n\}$ be a sequence in $gH_i$ such that $h_n\to\zeta$. Let $\rho$ be a sublinear function such that every subsegment of $\beta$ is $\rho$-contracting. Then $h_n\beta$ is also $\rho$-contracting. Let $\mu_n$ be geodesic starting from $e$ and representing $h_n\eta$. By Lemma \ref{almosttriangle}, there exists a sublinear function $\rho'$ independent of $n$ such that $\mu_n$ is $\rho'$-contracting. Let $p_n$ be a nearest point projection from $e$ onto $h_n\beta$. Define $\delta_n:=[e,p_n]\cup[p_n,h_n]$ where $[p_n,h_n]$ is the subsegment of $h_n\beta$ between $p_n$ and $h_n$.\par
 Next, we show that sequence $\{d(e,p_n)\}$ is unbounded. To see this first observe if $p_n$ lie on $gH_i^h$ then $\{d(e,p_n)\}$ is unbounded. So, assume $p_n$ lie outside the horoball. Clearly the subsegment $[p_n,h_n)$ of $h_n\beta$ lie outside $gH_i^h$. On contrary, assume $\{d(e,p_n)\}$ is bounded. $h_n\rightarrow \zeta$ and paths $\delta_n$ in this case is $(3,0)$-quasi-geodesic connecting $e$ and $h_n$. By definition of convergence, if sequence $\{d(e,p_n)\}$ is bounded, then for all large $n$ subsegment $[p_n,h_n)$ lie in $gH_i^h$, a contradiction.\par
 Given $r\geq1$, it suffices to consider $L$, $A$ such that $L\leq\sqrt{r/3}$, $A\leq r/3$. Let $\gamma_n$ be arbitrary continuous $(L,A)$-quasi-geodesic in $h_n\eta$. Since $h_n\rightarrow \zeta$ and sequence $\{d(e,p_n)\}$ is unbounded, for given $R\geq r$ there exists $N=N(R)$ such that for all $n\geq N$, $\alpha^{\zeta}_r$ is $\kappa'(\rho_{\zeta},3,0)$-close to $[e,p_n]$. Path $\nu_n:=[e,p_n]\cup[p_n,h_n\eta)$ is a $(3,0)$-quasi-geodesic asymptotic to $\gamma_n$. As geodesics in $h_n\eta$ is uniformly contracting and $L\leq\sqrt{r/3}$, $A\leq r/3$ there exists a constant $M$ such that Hausdorff distance between $\gamma_n$ and $\nu_n$ is atmost $M$. Therefore for all large $n$, $d(\alpha^{\zeta}_r,\gamma_n)\leq \kappa'(\rho_{\zeta},3,0)+M$. Finally, application of Lemma \ref{CM 4.6} gives that, for all large $n$, $\gamma_n$ is $\kappa(\rho_{\zeta},3,0)$-close to $\alpha^{\zeta}[r,\infty)$. Hence $h_n\eta\rightarrow \zeta$.\\
 \underline{Case 2 }: Let $\zeta$ be a non-parabolic point.  Let $\alpha^{\zeta}$ be a geodesic ray starting from $e$ and representing $\zeta$. Let $\alpha:$$=$$\alpha^{\zeta}$, then $\alpha$ does not eventually go into any horoball. Assume $\alpha$ is parameterized by it's arc length with $\alpha_0$$=$$e$, then there exists an increasing sequence $\{\sigma_n\}$ of natural numbers such that $\alpha_{\sigma_n}\in G$. Also, for some $g'\in G$, $g'\eta\neq\eta$. Let $g'\eta=\eta'$. Join $\eta$ to $\eta'$ by a geodesic $\beta$. $\beta$ passes through a group element.  Assume, without loss of generality $\beta_0$$=$$e$. Choose $\rho$ such that $\alpha$, $\beta_{[0,\infty)}$ and $\bar{\beta}_{[0,\infty)}$ are $\rho$-contracting.
For all large $n$, at most one of $\alpha_{\sigma_n}\beta_{[0,\infty)}$ and $\alpha_{\sigma_n}\bar{\beta}_{[0,\infty)}$ lies in closed $\kappa'(\rho,1,0)$ neighborhood of $\alpha_{[0,\sigma_n]}$ for distance greater than $2\kappa'(\rho,1,0)$ from $\alpha_{\sigma_n}$ (a proof is given in Claim 1 of Lemma \ref{metrizable}).
For each $n$, define $g_n$$:=$$\alpha_{\sigma_n}$ if $\alpha_{\sigma_n}\beta_{[0,\infty)}$ does not remain in $\kappa'(\rho,1,0)$ neighborhood of $\alpha_{[0,\sigma_n]}$ for distance greater than $2\kappa'(\rho,1,0)$ otherwise define $g_n$$:=$$\alpha_{\sigma_n}g'$. We repeat the same arguments as in the  proof of Claim 1 of Lemma \ref{metrizable} to prove that $g_n\eta$ converges to $\zeta.$
\end{proof}
\begin{lemma}\label{rational dense}Let $G$ be  finitely generated group
and $\mathcal H$ be a finite collection of finitely generated subgroups of $G$ such that $\abs{\partial_cG^h}\geq 2$. Assume $G$ contains a relatively contracting element. Then given a non-empty open set $U\in \partial_cG^h$ there exists a relatively contracting element $g\in G$ such that $g^{\infty}\in U$.
\end{lemma}
\begin{proof}
Let $a$ be a relatively contracting element of $G$. Then $a^{\infty}$ is a non-parabolic point.
By Theorem \ref{dense}, the orbit $G a^{\infty}$
of $a^{\infty}$ is dense in $\partial_cG^h$. There exists $w\in G$ such that $wa^{\infty}\in U$. Take $g=waw^{-1}$. Then $g$
is relatively contracting in $G$ and $g^{\infty}=wa^{\infty}\in U$.
\end{proof}
Taking $\mathcal H$ to be the collection of trivial subgroup in Lemma \ref{rational dense} yields the following corollary.
\begin{corollary}\label{rational dense cor}(Corollary 6.2,
\cite{QLiu})\label{Liu} Let $G$ be a non virtually cyclic finitely generated group such that $\partial_cG\neq \emptyset$. Assume $G$ contains a contracting element. Then following holds; given a non-empty open set $U\in \partial_cG$ there exists a contracting element $g\in G$ such that $g^{\infty}\in U$.
\end{corollary}
\section{Applications}
\begin{definition}(Cannon-Thurston map) Let $X, Y$ be proper geodesic spaces and $i:Y\hookrightarrow X$
be an embedding. We say that a  Cannon-Thurston map for the pair $(X,Y)$ exists if the embedding $i$ extends to a continuous map $\partial_ci:\partial_cY\to \partial_cX$.
\end{definition}

\begin{theorem} \label{CT} Let $G$ be a finitely generated group and $H\leq G$ be a normal hyperbolic subgroup such that Cannon-Thurston map for the pair $(G,H)$ exists. If $G$ contains a contracting element and $\Lambda_cH$ has at least two elements then $\Lambda_cH=\partial_cG$ and $G$ is a hyperbolic group.
\end{theorem}
\begin{proof}

As $H$ is a hyperbolic group, $\partial_cH$ is a compact set. Since Cannon-Thurston map for $(G,H)$ exists, $\Lambda_cH$ being continuous image of a compact set is compact.
Suppose $\Lambda_cH$ is strictly contained in $\partial_cG$. Let $U=\partial_cG\setminus \Lambda_cH$. Then $U$
is an open subset of $\partial_cG$.
By Corollary \ref{rational dense cor}, $U$ contains an
element $g^{\infty}$ for some contracting element $g\in G$.
As $H$
is normal in $G$, the limit set $\Lambda_c(H)$ of $H$ in $G$ is $G$-invariant. Let $\eta\in\Lambda_c(H)\setminus \{g^{-\infty}\}$. Then $g^n\eta$ lies in $\Lambda_c(H)$.
But by weak North-South dynamics (Theorem \ref{NSDynamics}),
$g^n\eta$ converges to $g^{\infty}$. Hence, $g^n\eta\in U=\partial_cG\setminus \Lambda_c(H)$ for all large $n$, this is a contradiction. Hence $\Lambda_cH=\partial_cG$. Thus, $\partial_cG$ is compact and hence $G$ is a hyperbolic group.
\end{proof}

Suppose we have a short exact sequence of pairs of finitely generated groups
\[
1\rightarrow (K,K_1)\stackrel{i}\rightarrow (G,N_G(K_1))\stackrel{p}{\rightarrow}(Q,Q_1)\rightarrow 1
\]
with  $K$  hyperbolic relative to $K_1$. Assume $G$ \textit {preserves cusps} of $K$ i.e. for all $g\in G$, there exists $k_g\in K$ such that $gK_1g^{-1}=k_gK_1k_g^{-1}$.
The embedding $i$ induces an embedding $i_h:K^h\hookrightarrow G^h$ between cusped spaces.
Combining Theorem 3.11 of \cite{pal} and  Proposition 5.7 of \cite{Mj-Sardar} by Mj. \& Sardar, implies that if $G$ is   hyperbolic relative to $N_G(K_1)$  then there exists a Cannon-Thurston map $\partial_c i_h:\partial_cK^h\to \partial_cG^h$ for the embedding $i_h$.

\begin{theorem}

Suppose we have a short exact sequence of pairs of finitely generated groups
\[
1\rightarrow (K,K_1)\stackrel{i}\rightarrow (G,N_G(K_1))\stackrel{p}{\rightarrow}(Q,Q_1)\rightarrow 1
\]
 with $K$  hyperbolic relative to $K_1$, $G$ preserves cusps of $K$ and the limit set $\Lambda_c(K^h)$ of $K^h$ in $\partial_cG^h$ consists of at least two points. If there exists a Cannon-Thurston map $\partial i_h$ for the embedding
$i_h: K^h\hookrightarrow G^h$ then $G$ is hyperbolic relative $N_G(K_1)$.
\end{theorem}
\begin{proof}
Let $\xi=\Lambda_c(K_1^h)\subset \partial_cK^h$ and $x_n\in K_1$ be such that $x_n\to \xi$ in $K^h$.
The sequence $i_h(x_n)$ converges to $\partial i_h(\xi)$  in $G\cup\partial_cG^h$ because $\partial i_h$ is a continuous extension of $i_h$. Thus, $\partial i_h(\xi)\in \Lambda_c(K^h)\subset \partial_cG^h$.
Let $\alpha$ be a geodesic ray in $G$ starting from the identity element $e$ representing $\partial i_h(\xi)$. Then $\alpha$ is contracting in $G^h$ and lies in $N_G(K_1)^h$. By Lemma \ref{horoball contracting}, $N_G(K_1)^h$ is contracting in $G^h$. As $K$ is hyperbolic relative to $K_1$, $\partial_cK^h$ is compact.
Since $\partial i_h$ is continuous and
$\partial i_h(\partial_cK^h)=\Lambda_c(K^h)$, the limit set $\Lambda_c(K^h)$  is compact.\\
Claim : $\Lambda_c(K^h)=\partial_cG^h$.\\
Suppose $\Lambda_c(K^h)$ is strictly contained in $\partial_cG^h$. Let $U=\partial_cG^h\setminus \Lambda_c(K^h)$. Then $U$ is an open subset of $\partial_cG^h$. By Lemma \ref{rational dense}, $U$ contains an
element $g^{\infty}$ for some relatively contracting element $g\in G$.
As $K$
is normal in $G$, the limit set $\Lambda_c(K^h)$ is $G$-invariant. Let $\eta\in\Lambda_c(K^h)\setminus \{g^{-\infty}\}$. Then $g^n\eta$ lies in $\Lambda_c(K^h)$.
But by weak North-South dynamics (Theorem \ref{NSDynamics}),
$g^n\eta$ converges to $g^{\infty}$. Hence, $g^n\eta\in U=\partial_cG^h\setminus \Lambda_c(K^h)$ for all large $n$, this is a contradiction. Therefore, $\Lambda_c(K^h)=\partial_cG^h$ and
hence $\partial_cG^h$ is compact. Therefore by Theorem \ref{Mthm2}, $G$
is hyperbolic relative to $N_G(K_1)$.

\end{proof}

\section*{\sc References}
 \addcontentsline{toc}{section}{Bibliography}
\small
\begingroup
\renewcommand{\section}[2]{}
% this must be set to use natbib (citep, citet) but requires BibTeX
\bibliographystyle{plainnat}

% number 99 determines how much citation can be included in file (maximum 99)

\end{document}